\documentclass[10pt, reqno]{amsart}
\usepackage[margin=1in]{geometry}
\usepackage{amsmath,amssymb}
\usepackage{xcolor} % A package to add color.
\usepackage{amsthm}
\usepackage{amsfonts}
\usepackage{subcaption}
\usepackage{graphicx}
\usepackage{enumerate}

\usepackage[dvpis]{epsfig}
\usepackage[colorlinks=true]{hyperref}
 \hypersetup{urlcolor=blue, citecolor=red}
\pdfstringdefDisableCommands{\def\eqref#1{(\ref{#1})}}
\usepackage{mdframed}
\newmdenv[
  linecolor=black,
  linewidth=1pt,
  skipabove=10pt,
  skipbelow=10pt,
  innertopmargin=6pt,
  innerbottommargin=6pt,
  innerleftmargin=10pt,
  innerrightmargin=10pt,
  backgroundcolor=white,
  nobreak=false 
]{mybox}

\makeatletter
\newcommand*{\rom}[1]{\expandafter\@slowromancap\romannumeral #1@}
\makeatother

\newtheorem{theorem}{Theorem}[section]

\newtheorem{lemma}[theorem]{Lemma}
\newtheorem{proposition}[theorem]{Proposition}

\theoremstyle{definition}
\newtheorem{definition}[theorem]{Definition}

\theoremstyle{remark}

\newtheorem{remark}[theorem]{Remark}

\numberwithin{equation}{section}

%%%%%%

\newcommand{\U}{\mathsf{u}} 

%%%%%%Sihyun%%%%%
%%%%%%Integral-Related%%%%%

\def\XXint#1#2#3{{\setbox0=\hbox{$#1{#2#3}{\int}$ }
\vcenter{\hbox{$#2#3$ }}\kern-.6\wd0}}

%%%%%Differentials%%%%%
\newcommand{\dx}{\textnormal{d}x}

\newcommand{\dy}{\textnormal{d}y}

\newcommand{\dv}{\textnormal{d}v}

\newcommand{\dw}{\textnormal{d}w}

\newcommand{\ds}{\textnormal{d}s}

\newcommand{\dxi}{\textnormal{d}\xi}
\newcommand{\dtau}{\textnormal{d}\tau}

\newcommand{\ddt}{\frac{\textnormal{d}}{\textnormal{d}t}}
\newcommand{\dds}{\frac{\textnormal{d}}{\textnormal{d}s}}
%%%%%%%Bold Letters%%%%%%

%Blackboard Bold Characters

\newcommand{\bbN}{\mathbb N}

%MathCal Characters

\newcommand{\calC}{\mathcal C}

\newcommand{\calE}{\mathcal E}
\newcommand{\calF}{\mathcal F}

\newcommand{\calH}{\mathcal H}

\newcommand{\calP}{\mathcal P}

\newcommand{\calW}{\mathcal W}

\newcommand{\calZ}{\mathcal Z}

%%%%%Simpler BB Characters%%%%%
\newcommand{\N}{\mathbb{N}}
\newcommand{\R}{\mathbb{R}}

\newcommand{\T}{\mathbb{T}}

%%%%%textnormal%%%%%

%%%%%Matrices%%%%%
\newcommand{\bp}{\begin{pmatrix}}
\newcommand{\ep}{\end{pmatrix}}

%%%%%Etc%%%%%
\newcommand{\p}{\partial}

%%%%%pairing%%%%%

%%%%%Roman Numbers%%%%%

%%%%%Operators%%%%%
\DeclareMathOperator*{\esssup}{ess\,sup}

%%%%%Arrows%%%%%

\newcommand{\weakto}{\rightharpoonup}

%%%%%tikz%%%%%
\usepackage{tikz}
\usepackage{tikz-cd}
\usetikzlibrary{matrix}

%%%%%%%%%%

%%%%%%%%%%

\begin{document}

\title[Micro-macro and macro-macro limits for controlled leader-follower systems]{Micro-macro and macro-macro limits for controlled leader-follower systems}

\author[Albi]{Giacomo Albi}
\address[Giacomo Albi]{\newline Department of Computer Science\newline
Verona University, Verona 37134, Italy}
\email{giacomo.albi@univr.it}

\author[Choi]{Young-Pil Choi}
\address[Young-Pil Choi]{\newline Department of Mathematics\newline
Yonsei University, Seoul 03722, Republic of Korea}
\email{ypchoi@yonsei.ac.kr}

\author[Piu]{Matteo Piu}
\address[Matteo Piu]{\newline Department of Computer Science\newline
Verona University, Verona 37134, Italy}
\email{matteo.piu@univr.it}

\author[Song]{Sihyun Song}
\address[Sihyun Song]{\newline Department of Mathematics\newline
Yonsei University, Seoul 03722, Republic of Korea}
\email{ssong@yonsei.ac.kr}

\date{\today}

\keywords{Leader-follow dynamics, mean-field limit, micro-macro models, macro-macro models, modulated energy, Wasserstein distance.}

\begin{abstract} 
We study a leader-follower system of interacting particles subject to feedback control and derive its mean-field limits through a two-step passage: first to a micro-macro system coupling leader particles with a follower fluid, and then to a fully continuum macro-macro system. For each limiting procedure, we establish quantitative stability and convergence estimates based on modulated energy methods and Wasserstein distances. These results provide a rigorous foundation for the hierarchical reduction of controlled multi-agent systems. Numerical simulations are presented, including examples with interaction potentials beyond the analytical class considered, to demonstrate the dynamics and support the theoretical results.
\end{abstract}

\maketitle
\tableofcontents

%%%%%%%%%%%%%%%%%%%%%%%%%%%%%%%%%%%%%%%%%%%%%%%%%%%%%%%%%%
%
%
%
%
%
%
%
%
%
%
%
%
%
%
%%%%%%%%%%%%%%%%%%%%%%%%%%%%%%%%%%%%%%%%%%%%%%%%%%%%%%%%%%
\section{Introduction}
Interacting agent populations have emerged as powerful tools to capture the dynamics of systems composed of hierarchical classes of individuals in a wide range of applications, such as biology~\cite{BTDAC97,CKFL05,Shen08}, epidemiology~\cite{AWMSAHH20,DPTZ21}, and collective decision processes such as opinion formation~\cite{APZ14,CHL12,DMPW09}. A particularly relevant framework involves \emph{leader-follower interactions}, where agents differ in their ability to influence others and in the dynamics that drive each population. 

A central objective in multi-agent systems is to steer a group of agents toward desired collective behavior by designing appropriate control mechanisms. This modeling paradigm is widely used in contexts where guidance, coordination, or influence are central aspects such as: in crowd control \cite{AB19,AFS21,BLQRS23,Bosse13,CPT14} where selected agents guide collective movement to avoid congestion or ensure safety; in traffic management, where autonomous or informed vehicles act as mobile regulators \cite{Stern18,TZ19}; in social dynamics, such as opinion formation or information dissemination \cite{APZ14,DFWZ24}; and in biological systems, where differentiated roles emerge naturally, for example in swarm behavior or cellular coordination \cite{AF24,BEP21,CLMT25,BHLY24}. In these scenarios, the leader-follower distinction approach proves particularly advantageous, especially in the presence of limited resources or selective interventions \cite{AAMS22, BFK15, CFPT13}, while the design of optimal control action becomes extremely challenging especially when the number of agents involved is very large,
%once the solution of an optimization problem is required
see for example \cite{BBCK18, CFPT13}. For these reasons, both from a modeling and computational point of view, it is important to develop multiscale representations of leader-follower dynamics. On the one hand, such approaches aim to reduce the complexity of high-dimensional systems by approximating collective behavior on coarser scales; on the other hand, they allow for hybrid formulations in which some components, such as key leaders or localized interactions, are better described at the microscopic level, while the rest of the population is captured more efficiently through mesoscopic or macroscopic models \cite{ABCK15,CCH14,CCP17}.  Furthermore, the accurate coupling of different modeling scales is essential not only for theoretical robustness but also for the design of effective and manageable control strategies in large-scale systems. In particular, macroscopic or mean-field approximations play a crucial role in enabling the analysis and optimization of control actions, especially when direct intervention on a large ensemble of microscopic agents becomes computationally prohibitive. 
For further developments in this direction, see~\cite{AAMS22,ACFK17,BPTTR20,FPR14}.

Here, within the framework of leader-follower systems, we focus on leader agents influenced by external controls that aim to balance two competing goals: (i) approaching a preassigned destination, and (ii) maintaining cohesion with the follower population, represented by the empirical center of mass. Specifically, we will consider the following controlled dynamics of the interacting agent system:
\begin{equation} \label{eq:main-with-u}
\begin{split}
    &\ddt x_i = v_i, \quad i=1,\dots, N, \quad t > 0,\\
    &\ddt v_i = \U_i-v_i + \frac{1}{N}\sum_{j=1}^N \nabla W_L (x_j-x_i),\\
    &\ddt y_i = w_i, \quad i=1,\dots, M, \quad t > 0, \\
    &\ddt w_i = \frac{1}{M}\sum_{j=1}^M \nabla W_F(y_j-y_i) + \frac{1}{N}\sum_{j=1}^N \nabla W_C(x_j - y_i) + \frac{1}{N}\sum_{j=1}^N \phi(x_j - y_i) (v_j - w_i),
\end{split}
\end{equation}
where $\{(x_i, v_i)\}_{i=1}^N\in\Omega\times \R^d$ and $\{(y_i, w_i)\}_{i=1}^M\in\Omega\times \R^d$ represent the positions and velocities of the leaders and followers, respectively, with $\Omega = \T^d$ or $\R^d$. The potentials $W_L$, $W_F$, and $W_C$ encode the intra-population and inter-population interactions among leaders, followers, and between the two groups. In addition, $\phi: \Omega \to \R_+$ represents a spatially dependent communication weight that modulates the velocity alignment between leaders and followers.

The control $\U(t)=(\U_1(t),\ldots,\U_N(t))$ acts as a forcing term driving the leaders' population according to the minimization of the following functional,
\begin{equation}\label{eq:main-cost}
\min_{\U(\cdot)\in\mathcal{U}} \frac{1}{N}\sum_{i=1}^N\left(\mathcal L(x_i(T),\langle x \rangle_{\varrho_F^M}(T))+\gamma\int_0^T |\U_i(\tau)|^2\,\dtau\right),
\end{equation}
where $\mathcal U$ is the set of admissible controls $\U$ (e.g. $\U\in L^2(0,T)$), and $\gamma > 0$ penalizes the control energy.  The terminal cost $\mathcal{L}$ is designed to enforce both the convergence to the target and the alignment with the group:
\[
\mathcal L(x_i(t),\langle x \rangle_{\varrho_F^M}(t))=\alpha|x_i(t) - \langle x \rangle_{\varrho_F^M}|^2+\beta|x_i(t)-x_{d_i}|^2.
\]
where $\alpha,\beta \in [0,1]$ are weighting parameters such that $\alpha+\beta = 1$, and $\langle x \rangle_{\varrho_F^M}$ denotes the empirical center of mass of the followers:
\[%\begin{equation}\label{cmf}
\langle x \rangle_{\varrho_F^M} := \int_\Omega x \varrho_F^M (\dx) = \frac1M\sum_{j=1}^M y_j, \quad \mbox{with } \varrho_F^M = \frac1M \sum_{i=1}^M \delta_{y_i}.
\]%\end{equation}
However, the minimization of the full time-dependent functional \eqref{eq:main-cost} is computationally demanding, particularly in the presence of nonlinear interactions and large agent populations, see for example \cite{BBCK18}. This motivates the search for simplified strategies that approximate optimal control laws while remaining tractable for large-scale systems. In particular, by minimizing the cost functional \eqref{eq:main-cost} via a greedy approach over a sequence of sequential instantaneous prediction horizon, i.e. replacing the time horizon in \eqref{eq:main-cost}  with $[t,t+h]$ with a $h\ll 1$, one can derive a feedback control of the form:
\begin{equation}\label{eq:u_feedback}
 \U_i(t) = -\alpha(x_i(t) - \langle x \rangle_{\varrho_F^M}(t)) - (1-\alpha)(x_i(t)-x_{d_i}).
 \end{equation}
We refer to Appendix~\ref{appendix:greedycontrol} for a detailed derivation of the feedback control starting from the cost functional~\eqref{eq:main-cost}, and throughout the manuscript, the role of the control will be investigated solely in the form expressed in \eqref{eq:u_feedback}.

 Hence, in this context, the goal of this work is twofold: first, to analyze the influence of the control-driven interaction terms that naturally arising from the instantaneous approximation of \eqref{eq:main-cost}; and second, to rigorously establish a mean-field hierarchy of reduced models, including particle-fluid and fluid-fluid systems, that faithfully approximate the dynamics of the original controlled particle system in successive asymptotic regimes.
 To better clarify the hierarchy proposed, first we will derive from the fully discrete particle system the particle-fluid coupled model, as the number of followers $M \to \infty$; and subsequently to the fully macroscopic description as the number of leaders $N \to \infty$. 
 This hierarchical approach not only provides a systematic framework for model reduction, but also offers insight into the emergent behavior induced by decentralized control strategies.

 The paper is organized as follows. In Section \ref{sec:main_results}, we discuss our main results concerning the hierarchy of mean-field limits for the leader-follower system. In Section \ref{sec:mimi-mima}, we rigorously derive the first-level mean-field limit, establishing the convergence from the fully discrete leader-follower particle system to the intermediate particle-fluid model, where the follower population is described by a continuum. This includes quantitative estimates for both the leader and follower sectors and a stability analysis based on a modulated energy framework. Section \ref{sec:mima-mama} is devoted to the second-level limit, where we rigorously derive the macroscopic fluid-fluid system as the mean-field limit of the intermediate micro-macro model. In Section~\ref{sec_numer}, we present a series of numerical tests designed to validate the theoretical convergence results, investigate the impact of leader–follower coupling, and illustrate complex emergent behaviors such as finite-time blow-up and shock formation. These examples also include configurations with interaction potentials beyond the analytical assumptions considered in the theory. Finally, in Appendix \ref{app_existence}, we establish local-in-time existence and uniqueness of regular solutions for the derived continuum models, thereby providing a well-posed foundation for the limit systems.

\section{Mean-field limits of leader-follower system}\label{sec:main_results}
In this section, we first present our main results concerning the mean-field limit of the interacting leader-follower system, considering the microscopic leader-follower system
\begin{mybox}
\begin{equation}\label{eq:mimi}
\begin{split}
    \ddt x_i^{N,M} &= v_i^{N,M}, \quad i = 1,\dots, N, \quad t > 0,\\[1mm]
    \ddt v_i^{N,M} &= - (1 - \alpha) (x_i^{N,M} - x_{d_i}^{N,M}) - \alpha (x_i^{N,M} - \langle x \rangle_{\varrho_F^M}) - v_i^{N,M} \cr
    &\quad + \frac{1}{N}\sum_{j=1}^N \nabla W_L (x_j^{N,M}-x_i^{N,M}) ,\\
    \ddt y_i^{N,M} &= w_i^{N,M},\quad i = 1,\dots, M, \quad t > 0,\\[1mm]
    \ddt w_i^{N,M} &= \frac{1}{M}\sum_{j=1}^M \nabla W_F(y_j^{N,M}-y_i^{N,M}) + \frac{1}{N}\sum_{j=1}^N \nabla W_C(x_j^{N,M} - y_i^{N,M}) \\
    &\quad + \frac{1}{N}\sum_{j=1}^N \phi(x_j^{N,M} - y_i^{N,M}) (v_j^{N,M} - w_i^{N,M}), 
\end{split}
\end{equation}
\end{mybox}
where we considered the feedback control term \eqref{eq:u_feedback} as driving force for the leader dynamics in system \eqref{eq:main-with-u}.

Throughout the paper, we impose the following assumptions on the interaction potentials and the communication weight:
\begin{itemize}
\item[{\bf (A1)}] The potentials are symmetric, i.e.,
\[
W(x) = W(-x) \quad \mbox{for all $x \in \Omega$ and $W \in \{W_L, W_F, W_C\}$}
\]
and satisfy the boundedness condition
\[
 -\inf_{x\in \Omega} W_L(x) < +\infty, \quad -\inf_{x\in \Omega} W_F(x) < +\infty .
\]
\item[{\bf (A2)}] The interaction potentials are regular:
\[
W_L, W_F, W_C \in C^2(\Omega) \quad \mbox{and} \quad \nabla W_L, \nabla W_F, \nabla W_C \in \calW^{1,\infty}(\Omega) .
\]
\item[{\bf (A3)}] The communication weight satisfies
\[
\phi \in C^1 \cap \calW^{1,\infty}(\Omega).
\]
\end{itemize} 
These assumptions ensure the global-in-time existence and uniqueness of classical solutions to the coupled particle system \eqref{eq:mimi}.

We postpone the discussion on the existence of solutions to the intermediate (micro-macro) and limiting (macro-macro) systems to Appendix \ref{app_existence}, and here focus solely on the derivation of mean-field limits. It is worth noting that the assumptions required for proving convergence in the mean-field limit are, in fact, weaker than those typically needed to establish well-posedness for the limiting PDE systems.

Before stating our main results, we introduce several notational conventions and function spaces that will be used throughout the analysis. For $1 \leq p < \infty$ and $q \in \R$, we define a weighted space $L^p_q(\Omega)$ equipped with norm:
\[
\|f\|_{L^p_q}:= \left(\int_{\Omega} (1+|x|^2)^{\frac q2} |f(x)|^p\,dx\right)^\frac1p, 
\]
and for $p = \infty$,
\[
\|f\|_{L^\infty_q}:= \esssup_{x \in \Omega} (1+|x|^2)^{\frac q2} |f(x)|.
\]
We also define the corresponding Sobolev space $\calW^{1,p}_q(\Omega)$ to consist of functions $f$ such that both $f$ and $\nabla f$ belong to $L^p_q(\Omega)$. Note that when $\Omega = \mathbb{T}^d$, these weights are unnecessary. In addition, for a Banach space $X$ and a compactly supported function $g: \Omega \to \mathbb{R}$, we use the notation
\[
\|f\|_{X_g} = \sup_{\xi \in {\rm supp}(g)} \|f(\cdot, \xi)\|_{X}.
\]
In the special case where $g = \delta_{x_d}$ for some fixed point $x_d \in \Omega$, corresponding to $x_{d_i} = x_d$ for all $i$, the function $f$ is effectively independent of the variable $\xi$ and only evaluated at $\xi = x_d$. As a result, the norm reduces to
\[
\|f\|_{X_g} = \|f(\cdot)\|_X \quad \mbox{with } f(x) := f(x, x_d).
\]
Finally, we denote by $\pi^x:\R^d\times\R^d\to\R^d$, $\pi(x,v)=x$ the projection into the $x$-component, and for a measure $\nu\in\calP(\R^m)$, $f {}_\# \nu\in\calP(\R^k)$ is the push-forward of $\nu$ by a measurable map $f :\R^m \to \R^k$, $m,k\in\N$.

In what follows, we will abuse notation by not distinguishing between measures and their densities when no confusion arises.

Our first main result concerns the mean-field limit from the microscopic system \eqref{eq:mimi} to the particle-fluid system \eqref{eq:mima}, in the regime where the number of followers tends to infinity ($M \to \infty$), while the number of leaders $N \in \mathbb{N}$ remains fixed:
\begin{mybox}
\begin{equation}\label{eq:mima}
\begin{split}
    &\ddt \bar{x}_i^N = \bar{v}_i^N, \quad i = 1,\dots, N, \quad t > 0,\\[1mm]
    &\ddt \bar{v}_i^N = -(1-\alpha)\left(\bar{x}_i^N - \bar x_{d_i}^N \right) -\alpha (\bar{x}_i^N - \langle x \rangle_{\rho_F^N} ) - \bar{v}_i^N - \nabla W_L *   \bar\varrho_{L}^N(\bar{x}_i^N),\\[1mm]
    &\partial_t \rho_F^N + \nabla \cdot (\rho_F^N u_F^N) = 0, \quad x\in \Omega, \quad t > 0,\\[1mm]
    &\partial_t (\rho_F^N u_F^N) + \nabla \cdot (\rho_F^N u_F^N\otimes u_F^N) = -\rho_F^N\, \nabla W_F * \rho_F^N - \rho_F^N\, \nabla W_C *  \bar\varrho_{L}^N \\
    &\hspace{6cm} + \rho_F^N \iint_{\Omega \times\R^d } \phi(x-y)\,(w - u_F^N(x))\,\bar\mu_L^N(\dy\dw),
\end{split}
\end{equation} 
\end{mybox}
where
\[
\langle x \rangle_{\rho_F^N} = \int_{\Omega} x\rho_F^N(x)\,\dx, \quad \bar\varrho_{L}^N = \frac1N \sum_{i=1}^N \delta_{\bar x_i^N},  \quad \mbox{and}  \quad \bar\mu_L^N = \frac1N \sum_{i=1}^N \delta_{(\bar x_i^N, \bar v_i^N)}.
\]

We now rigorously justify the first step in the mean-field hierarchy by passing from the fully microscopic leader-follower particle system \eqref{eq:mimi} to the intermediate particle-fluid system \eqref{eq:mima}, in the limit as the number of followers $M \to \infty$ with $N$ fixed. The result below establishes convergence of the empirical measures under appropriate regularity and initial alignment assumptions.

\begin{theorem}[Micro-Micro to Micro-Macro Limit]\label{thm:mimi-mima}
Let $T>0$. Let $ \{(x^{N,M}_i,v^{N,M}_i) \}_{i=1}^N$ and $ \{(y^{N,M}_i,w^{N,M}_i) \}_{i=1}^M$ be classical solutions to the particle system \eqref{eq:mimi} on $[0,T]$. Let $(\{(\bar{x}^N_i, \bar{v}^N_i) \}_{i=1}^N, \rho_F^N, u_F^N)$ denote the unique classical solution to the particle-fluid system \eqref{eq:mima} satisfying 
\[
\rho_F^N \in L^\infty(0,T; L^1_2(\Omega)), \quad u_F^N \in L^\infty(0,T;\calW^{1,\infty}(\Omega)).
\] 
Suppose the assumptions {\bf (A1)}--{\bf (A3)} hold, and the initial data satisfy
\begin{align}\label{ass_mimi-mima}
\begin{aligned}
&d_1^2\left(\pi^x_\#\mu_F^{N,M}(0), \rho_F^N(0) \right) + \iint_{\Omega \times \R^d} |w - u_F^N(0,y)|^2 \mu_F^{N,M}(0,\dy \dw)  \cr
&\quad + \iiiint_{(\Omega \times \R^d)^2} (|x-\bar x|^2 + |v-\bar v|^2) \pi_L^{N,M}(0,\dx \dv {\rm d}\bar x {\rm d}\bar v) \to 0 \quad \mbox{as } M \to \infty, 
\end{aligned}
\end{align}
where $d_1$ stands for the first-order Wasserstein distance, $\mu_F^{N,M}$ and $\pi_L^{N,M}$ denote the empirical measure associated to \eqref{eq:mimi} and coupling, respectively:
\[
\mu_F^{N,M}(t) := \frac1M\sum_{i=1}^M \delta_{((y_i^{N,M}(t), w_i^{N,M}(t))}, \quad  \pi_L^{N,M}(t) := \frac1N \sum_{i=1}^N \delta_{\left((x_i^{N,M}(t), v_i^{N,M}(t)), (\bar x_i^{N}(t), \bar v_i^{N}(t))\right)}.
\]
Then the following convergences hold as $M \to \infty$:
\begin{equation} \label{eq: thm micmic conv}
\begin{split}
    &\sup_{0\le t \le T} \iiiint_{(\Omega \times \R^d)^2} (|x-\bar x|^2 + |v-\bar v|^2) \pi_L^{N,M}(t,\dx \dv {\rm d}\bar x {\rm d}\bar v) \to 0,\\
    &\int_{\R^d}\mu_F^{N,M}( \dw) = \frac{1}{M}\sum_{i=1}^M \delta_{y_i^{N,M}}  \weakto \rho_F^N \quad \text{in } L^\infty(0,T;(C_b(\Omega))^*),\\
    &\int_{\R^d}w\mu_F^{N,M}(\dw) = \frac{1}{M}\sum_{i=1}^M w_i^{N,M} \delta_{y_i^{N,M}} \weakto \rho_F^N u_F^N \quad \text{in }  L^\infty(0,T;(  \calW^{1,\infty} (\Omega))^*),\\
    &\int_{\R^d}w \otimes w \mu_F^{N,M}(\dw) = \frac{1}{M}\sum_{i=1}^M w_i^{N,M} \otimes  w_i^{N,M}\delta_{y_i^{N,M}} \weakto \rho_F^N u_F^N \otimes u_F^N \quad \text{in }  L^\infty(0,T;(  \calW^{1,\infty}  (\Omega) )^*),    \\
       & \mu_F^{N,M} = \frac{1}{M}\sum_{i=1}^M \delta_{(y_i^{N,M}, w_i^{N,M})} \weakto \rho_F^N \delta_{u_F^N}\quad \text{in }  L^\infty(0,T;(  \calW^{1,\infty}     (\Omega \times \R^d))^*).
    \end{split}
\end{equation}
Moreover, we have the following quantitative estimate for $t \in [0,T]$:
\begin{align}\label{eq: thm micmic micmac quant}
\begin{aligned}
&d_1^2\left(\pi^x_\#\mu_F^{N,M}(t), \rho_F^N(t) \right) + \iint_{\Omega \times \R^d} |w - u_F^N(t,y)|^2 \mu_F^{N,M}(t,\dy \dw)  \cr
&\quad + \iiiint_{(\Omega \times \R^d)^2} (|x-\bar x|^2 + |v-\bar v|^2) \pi_L^{N,M}(t,\dx \dv {\rm d}\bar x {\rm d}\bar v) \cr
&\qquad \leq  Cd_1^2\left(\pi^x_\#\mu_F^{N,M}(0), \rho_F^N(0) \right)+ C\iint_{\Omega \times \R^d} |w - u_F^N(0,y)|^2 \mu_F^{N,M}(0,\dy \dw)  \cr
&\quad \qquad +C\iiiint_{(\Omega \times \R^d)^2} (|x-\bar x|^2 + |v-\bar v|^2) \pi_L^{N,M}(0,\dx \dv {\rm d}\bar x {\rm d}\bar v),
\end{aligned}
\end{align}
where $C>0$ is a constant independent of $N$ and $M$.
\end{theorem}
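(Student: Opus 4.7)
The plan is to establish \eqref{eq: thm micmic micmac quant} via a modulated energy argument that simultaneously tracks three coupled quantities---a positional comparison for the followers, the follower modulated kinetic energy, and the leader position/velocity coupling---and then to deduce the convergences \eqref{eq: thm micmic conv} from it by standard weak-convergence arguments. To avoid differentiating the Wasserstein distance $d_1$ directly, I would first introduce auxiliary trajectories $Z_j(t)$, $j=1,\dots,M$, defined by $\ddt Z_j=u_F^N(t,Z_j)$ with $Z_j(0)=y_j^{N,M}(0)$. Since $u_F^N\in L^\infty(0,T;\calW^{1,\infty}(\Omega))$, its flow $\Psi_t$ is bi-Lipschitz with $\text{Lip}(\Psi_t)\le e^{Ct}$ and $\rho_F^N(t)=\Psi_t{}_\#\rho_F^N(0)$; the triangle inequality for $d_1$ together with the admissible coupling $\frac{1}{M}\sum_j\delta_{(y_j^{N,M}(t),Z_j(t))}$ then yields
\[
d_1^2\bigl(\pi^x_\#\mu_F^{N,M}(t),\rho_F^N(t)\bigr)\le \frac{2}{M}\sum_{j=1}^M|y_j^{N,M}(t)-Z_j(t)|^2+C\,d_1^2\bigl(\pi^x_\#\mu_F^{N,M}(0),\rho_F^N(0)\bigr).
\]
It therefore suffices to control the energy
\[
\calE(t):=\frac{1}{M}\sum_{j=1}^M\Bigl(|y_j^{N,M}-Z_j|^2+|w_j^{N,M}-u_F^N(y_j^{N,M})|^2\Bigr)+\frac{1}{N}\sum_{i=1}^N\Bigl(|x_i^{N,M}-\bar x_i^N|^2+|v_i^{N,M}-\bar v_i^N|^2\Bigr).
\]

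The second step is to differentiate each block of $\calE$ along the dynamics and close a Grönwall inequality. For the follower position block, the splitting $w_j^{N,M}-u_F^N(Z_j)=(w_j^{N,M}-u_F^N(y_j^{N,M}))+(u_F^N(y_j^{N,M})-u_F^N(Z_j))$ combined with Lipschitzness of $u_F^N$ produces a bound $\le C\calE$. For the modulated kinetic block, I would rewrite the momentum equation in \eqref{eq:mima} in material-derivative form and subtract the ODE for $w_j^{N,M}$; the resulting defect decomposes into a self-interaction mismatch $\tfrac{1}{M}\sum_k\nabla W_F(y_k^{N,M}-y_j^{N,M})-\nabla W_F*\rho_F^N(y_j^{N,M})$, controlled by $d_1(\pi^x_\#\mu_F^{N,M},\rho_F^N)$ via Kantorovich duality and Lipschitzness of $\nabla W_F$; cross terms from $\nabla W_C$ and $\phi$ handled by adding and subtracting the intermediate leader states $(\bar x_k^N,\bar v_k^N)$ and the fluid value $u_F^N(y_j^{N,M})$; and the dissipative drift $-\nabla u_F^N(y_j^{N,M})(w_j^{N,M}-u_F^N(y_j^{N,M}))$. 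For the leader block, subtracting the $(\bar x_i^N,\bar v_i^N)$ equations from those of $(x_i^{N,M},v_i^{N,M})$ produces a $\nabla W_L$ mismatch handled by its Lipschitzness, the dissipative $-(v_i^{N,M}-\bar v_i^N)$, and the center-of-mass mismatch $\alpha(\langle x\rangle_{\varrho_F^M}-\langle x\rangle_{\rho_F^N})$, which---since each coordinate is a $1$-Lipschitz test function---is again bounded by $d_1(\pi^x_\#\mu_F^{N,M},\rho_F^N)$.

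Combining all these estimates and applying Young's inequality to absorb cross terms gives $\ddt\calE(t)\le C\calE(t)+C\,d_1^2(\pi^x_\#\mu_F^{N,M}(0),\rho_F^N(0))$; Grönwall together with the identity $Z_j(0)=y_j^{N,M}(0)$ (so that $\calE(0)$ reduces to the initial modulated kinetic energy plus initial leader coupling) then delivers \eqref{eq: thm micmic micmac quant} after reinserting the Wasserstein comparison above. The convergences in \eqref{eq: thm micmic conv} then follow from this strong convergence: the $d_1$ bound on $\pi^x_\#\mu_F^{N,M}$ gives weak-$*$ convergence against $C_b$, while the moment convergences against $\calW^{1,\infty}$ test functions $\varphi$ follow by the decomposition $\frac{1}{M}\sum\varphi(y_j^{N,M})w_j^{N,M}=\frac{1}{M}\sum\varphi(y_j^{N,M})u_F^N(y_j^{N,M})+\frac{1}{M}\sum\varphi(y_j^{N,M})(w_j^{N,M}-u_F^N(y_j^{N,M}))$, with Cauchy--Schwarz applied to the second sum, and an analogous identity for the tensor moment. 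I expect the main obstacle to be the coupled nature of the estimates: because the follower dynamics depend on the microscopic leaders through $\nabla W_C$ and $\phi$, while the leader dynamics depend on the followers through the center of mass, none of the three blocks of $\calE$ closes in isolation, and every add-and-subtract decomposition must be chosen so as to route each residual back into $\calE$ itself or into the initial Wasserstein term rather than leave unfavorable remainders behind.
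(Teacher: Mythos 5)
Your proposal is correct and follows essentially the paper's argument: the same modulated-energy functional for the followers, the same $\ell^2$ quantities for the leaders, the same add-and-subtract decompositions for the $\nabla W_F$, $\nabla W_C$ and $\phi$ defects, and the same mechanism for controlling $d_1(\pi^x_\#\mu_F^{N,M},\rho_F^N)$ --- your auxiliary flow trajectories $Z_j$ are exactly the content of the estimate the paper imports from \cite{CC21} (Lemma \ref{lem: d1 sigma}) rather than re-deriving, and merging everything into a single Gr\"onwall loop instead of two is only a cosmetic difference. The one ingredient you gloss over is the uniform-in-$M$ $\ell^2$ bound on positions and velocities of both populations (the paper's Lemma \ref{lem: l2 bdd}, which relies on {\bf (A1)}): without it the cross term $[\phi(\bar x_j-y_i)-\phi(x_j-y_i)](v_j-w_i)$ in the alignment defect cannot be routed back into your energy, since $|v_j-w_i|$ is not controlled by $\calE$.
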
 

\begin{remark}\label{rmk_mom}
Here we clarify the a priori assumptions on $(\rho_F^N, u_F^N)$ in Theorem \ref{thm:mimi-mima}. The condition $\rho_F^N \in L^\infty(0,T; L^1_2(\Omega))$ ensures that the follower center of mass is well-defined. Moreover, we can easily verify
\begin{align*}
        \frac12\ddt \int_{\Omega} |x|^2 \rho_F^N\,\dx 
        &= \int_{\Omega} x \cdot \rho_F^N u_F^N \,\dx  \\
        &\le \frac12\int_{\Omega} |x|^2 \rho_F^N\,\dx + \frac12\int_{\Omega} \rho_F^N |u_F^N|^2 \,\dx  \\
        &\leq  \frac12\int_{\Omega} |x|^2 \rho_F^N\,\dx + \frac12\|u_F^N\|_{L^\infty}^2\|\rho_F^N\|_{L^1},
\end{align*}
so that an application of Gr\"onwall's lemma yields the propagation in time of $\|\rho_F^N(t)\|_{L^1_2}$.

Regarding the assumption on $u_F^N$, note that its boundedness is not required to establish the quantitative estimate \eqref{eq: thm micmic micmac quant} itself, but it is necessary to justify the convergence statements in \eqref{eq: thm micmic conv}.
\end{remark}

We next present the mean-field limit from the intermediate (micro-macro) system \eqref{eq:mima} to the following macro-macro system as the number of leaders tends to infinity, i.e., $N \to \infty$:
\begin{mybox}
\begin{align}\label{eq:mama}
\begin{aligned}
    &\p_t \bar \rho_L + \nabla_x\cdot (\bar \rho_L \bar u_L) = 0, \quad (x,\xi)\in \Omega\times\textnormal{supp}(g), \quad t>0,\\
    &\p_t (\bar \rho_L \bar u_L) + \nabla_x \cdot (\bar \rho_L \bar u_L \otimes \bar u_L) = -(1-\alpha) \bar \rho_L (x-\xi) - \alpha \bar \rho_L (x - \langle x \rangle_{\bar \rho_F}) - \bar \rho_L \bar u_L\\
    &\qquad\qquad\qquad\qquad\qquad\qquad\qquad - \bar\rho_L \iint_{\Omega \times \Omega} \nabla W_L(x-y)\bar\rho_L(y,\xi_*)\, g(\dxi_*) \dy,\\
    & \p_t \bar\rho_F + \nabla_x \cdot (\bar\rho_F \bar u_F) = 0,  \quad x\in \Omega, \quad t > 0,\\
    &\p_t (\bar\rho_F \bar u_F) + \nabla_x \cdot (\bar\rho_F \bar u_F \otimes \bar u_F ) = - \bar\rho_F \nabla W_F * \bar \rho_F - \bar\rho_F \nabla W_C * (\pi^x_{\#}\bar\rho_L) \\
    &\qquad \qquad \qquad \qquad \qquad \qquad \qquad + \bar\rho_F \iint_{\Omega\times\Omega} \phi(x-y) (\bar u_L (y,\xi) - \bar u_F(x) ) \bar \rho_L(y,\xi) \,   g(\dxi) \dy .
\end{aligned}
\end{align} 
\end{mybox}
Here, $\bar \rho_L(t,x,\xi)$ denotes the density of leaders located at position $x \in \Omega$ and associated with target point $\xi \in \Omega$ at time $t > 0$. The variable $\xi$ is distributed according to a fixed probability measure $g \in \mathcal{P}(\Omega)$, which encodes the distribution of desired destinations. The macroscopic velocity field $\bar u_L(t,x,\xi)$ represents the average velocity of leaders at $(x,\xi)$ and is derived as the limit of empirical velocities from the intermediate system. Similarly, $(\bar \rho_F, \bar u_F)$ describe the macroscopic density and velocity of the follower population.

We assume the following compatibility condition:
\[%\begin{equation} \label{assum g}
\textnormal{supp}_\xi \left(\int_\Omega \bar\rho_L(t,x,\xi)\dx\right) \subset \textnormal{supp}(g),\quad \int_\Omega \bar\rho_L(t,x,\xi)\dx \in L^1(\Omega,g(\dxi)),
\]%\end{equation}
for all $t\in [0,T]$, the former which ensures that all leader populations are encoded according to the measure $g$, and the latter of which ensures that the total population of leaders, summed over all target points, is finite. In addition, we assume that the target points $x_{d_i}$ for the leaders (particles) in \eqref{eq: micmac lim} are \textit{convergent in variance} to $g$, in the sense that
\[%\begin{align}\label{eq: g compat 2}
    \iint_{\Omega \times \Omega} |\xi - \xi_*|^2 \left(\frac{1}{N}\sum_{i=1}^N \delta_{x_{d_i}}(\dxi) \right) \otimes g(\dxi_*) \to 0 \quad \text{as}\quad N\to\infty.
\]%\end{align}
This ensures that the micro-macro and macro-macro systems are compatible, and we remark that it is stronger than convergence in the $2$-Wasserstein distance. This assumption is essential to control the propagation of the target information in the macroscopic limit.

Finally, we introduce the following additional regularity condition on the communication weight function.
\begin{itemize}
\item[{\bf (A4)}] The weight function $\phi$ satisfies the weighted regularity condition:
\[
\phi \in \calW^{1,\infty}_1(\Omega),
\]
i.e. $|x|\phi, |x||\nabla \phi| \in L^\infty(\Omega)$.
\end{itemize} 

 Under the above assumptions, we are now in a position to rigorously justify the convergence from the intermediate particle-fluid system to the macroscopic two-fluid system. The next result establishes the mean-field limit as the number of leaders $N$ tends to infinity, showing that the empirical measures associated with the leader particles converge to the macroscopic density $\bar \rho_L$, and that the corresponding velocity fields also converge appropriately. The result includes both qualitative convergence and a quantitative estimate controlled by the initial data and the discrepancy between the empirical distribution of target positions and the limit measure $g$.

\begin{theorem}[Micro-Macro to Macro-Macro Limit]\label{thm:mima-mama}
    Let $T>0$. Let $(\{(\bar{x}^N_i, \bar{v}^N_i) \}_{i=1}^N, \rho_F^N, u_F^N)$ be a classical solution to the micro-macro system \eqref{eq:mima} on $[0,T]$, and let $(\bar \rho_L, \bar u_L, \bar\rho_F, \bar u_F)$ be the unique classical solution to the macro-macro system \eqref{eq:mama} satisfying 
    \[
    \bar \rho_F \in L^\infty(0,T; L^1_2(\Omega)), \quad \bar u_F \in L^\infty(0,T;\calW^{1,\infty}(\Omega)), \quad \bar u_L \in L^\infty(0,T; L^\infty_{g}(B)) \mbox{ for all } B \subset \subset \Omega, 
    \]
    and 
    \[
    \nabla \bar u_L \in L^\infty(0,T;L^\infty_g(\Omega)).
    \]
Suppose the assumptions {\bf (A1)}--{\bf (A4)} hold, and the initial data satisfy
    \begin{align*}
&    d_1^2(\bar\rho^N_F(0), \bar\rho_F(0)) + \int_{\Omega} \rho^N_F(0) | u^N_F(0) - \bar u_F(0)|^2\, \dx \cr
&\quad + \int_{\Omega} d_{\rm BL}^2\left( \bar\varrho^N_L(0), \bar\rho_L(0,\cdot,\xi)\right)g(d\xi) + \iiint_{\Omega \times \R^d \times \Omega} |v - \bar u_L(0,x,\xi)|^2 \bar\mu_{L}^N(0,\dx\dv) g(\dxi)  \to 0 \quad \mbox{as } N \to \infty, 
    \end{align*}
    then the following convergences hold as $N \to \infty$:
\[%    \begin{equation} \label{eq: thm conv mac}
    \begin{split}
        &\frac{1}{N}\sum_{i=1}^N \delta_{\bar x_i^N} \weakto \bar \rho_L  \quad \text{in }  L^\infty(0,T; L^1((\Omega,g(\dxi));(C_b(\Omega))^*)), \\ %\text{ uniformly in $t\in [0,T]$},\\
        &\frac{1}{N}\sum_{i=1}^N \bar v_i^N \delta_{\bar x_i^N} \weakto \bar \rho_L \bar u_L, \quad \frac{1}{N}\sum_{i=1}^N \bar v_i^N \otimes v_i^N \delta_{\bar x_i^N} \weakto \bar \rho_L \bar u_L \otimes \bar u_L  \quad \text{in }  L^\infty(0,T; L^1((\Omega,g(\dxi)) ; (C_c^1(\Omega))^*)), \\ % \text{ uniformly in $t\in[0,T]$},\\
%                &\frac{1}{N}\sum_{i=1}^N \bar v_i^N \otimes v_i^N \delta_{\bar x_i^N} \weakto \bar \rho_L \bar u_L \otimes \bar u_L \quad \text{in }  L^\infty(0,T; L^1((\Omega,g(\dxi)) ; (C_c^1(\Omega))^*)), \\
                &\frac{1}{N}\sum_{i=1}^N  \delta_{(\bar x_i^N, \bar v_i^N)} \weakto \bar \rho_L  \delta_{\bar u_L} \quad \text{in }  L^\infty(0,T; L^1((\Omega,g(\dxi)) ; (C_c^1(\Omega \times \R^d))^*)), \\
        &\bar \rho_F^N  \weakto \bar\rho_F  \quad \text{in }  L^\infty(0,T;(C_b(\Omega))^*),\\
        &\bar\rho_F^N \bar u_F^N  \weakto \bar\rho_F  \bar u_F, \quad \bar\rho_F^N \bar u_F^N \otimes \bar u_F^N  \weakto \bar\rho_F  \bar u_F \otimes \bar u_F    \quad \text{in }  L^\infty(0,T; (C_c^1(\Omega))^* ).
%                &\bar\rho_F^N \bar u_F^N \otimes \bar u_F^N  \weakto \bar\rho_F  \bar u_F \otimes \bar u_F  \quad \text{in }  L^\infty(0,T; (C_c^1(\Omega))^* ).
    \end{split}
\]%    \end{equation}
Here $d_{\rm BL}$ represents the bounded-Lipschitz distance. Moreover, we have the following quantitative estimate for $t \in [0,T]$:
        \begin{align}\label{est_miMA}
&    d_1^2(\bar\rho^N_F(t), \bar\rho_F(t)) + \int_{\Omega} \rho^N_F(t) | u^N_F(t) - \bar u_F(t)|^2\, \dx \cr
&\quad + \int_{\Omega} d_{\rm BL}^2\left( \bar\varrho^N_L(t), \bar\rho_L(t,\cdot,\xi)\right)g(d\xi) + \iiint_{\Omega \times \R^d \times \Omega} |v - \bar u_L(t,x,\xi)|^2 \bar\mu_{L}^N(t,\dx\dv) g(\dxi)\cr
&\qquad  \leq C d_1^2(\bar\rho^N_F(0), \bar\rho_F(0)) + C \int_{\Omega} \bar \rho^N_F(0) | \bar u^N_F(0) - \bar u_F(0)|^2\, \dx \cr
&\qquad \quad + C\int_{\Omega} d_{\rm BL}^2\left( \bar\varrho^N_L(0), \bar\rho_L(0,\cdot,\xi)\right)g(d\xi) + C\iiint_{\Omega \times \R^d \times \Omega} |v - \bar u_L(0,x,\xi)|^2 \bar\mu_{L}^N(0,\dx\dv) g(\dxi)\cr
&\qquad \quad  + C \iint_{\Omega \times \Omega} |\xi_*-\xi|^2 \left(\frac{1}{N}\sum_{i=1}^N \delta_{x_{d_i}}(\dxi)\right)\otimes g(\dxi_*),
    \end{align}
where the constant $C > 0$ is independent of $N$. 
\end{theorem}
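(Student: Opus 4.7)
The plan is to mimic the modulated-energy argument behind Theorem~\ref{thm:mimi-mima} by controlling the four quantities in \eqref{est_miMA} simultaneously via the combined functional
\[
\calE(t) := d_1^2(\rho_F^N,\bar\rho_F) + \int_\Omega \rho_F^N|u_F^N-\bar u_F|^2\sdx + \int_\Omega d_{\mathrm{BL}}^2(\bar\varrho_L^N,\bar\rho_L(\cdot,\xi))\,g(\dxi) + \iiint_{\Omega\times\R^d\times\Omega}|v-\bar u_L(x,\xi)|^2\,\bar\mu_L^N(\dx\dv)\,g(\dxi).
\]
The goal is a Gr\"onwall-type inequality $\ddt\calE(t)\le C\calE(t)+C\calD^N$ with $\calD^N:=\iint_{\Omega\times\Omega}|\xi-\xi_*|^2\bigl(\tfrac{1}{N}\sum_{i=1}^N\delta_{x_{d_i}}\bigr)(\dxi)\otimes g(\dxi_*)$; integration in time then produces \eqref{est_miMA}, and the weak convergence statements follow from $\calE(t)\to 0$ by testing against smooth functions exactly as in the closing step of Theorem~\ref{thm:mimi-mima}.

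The $d_1^2$ and bounded-Lipschitz pieces are handled by Kantorovich duality. Picking $\psi$ with $\|\nabla\psi\|_\infty\le 1$ (respectively $\|\psi\|_{\mathrm{BL}}\le 1$), differentiating $\int\psi\,d(\rho_F^N-\bar\rho_F)$ and $\int\psi\,d(\bar\varrho_L^N-\bar\rho_L(\cdot,\xi))$ using the corresponding continuity equations, and splitting the flux difference by adding and subtracting the macroscopic velocity yields one factor paired with the relevant modulated kinetic energy (by Cauchy--Schwarz) and a second factor producing a Lipschitz-type bound on the metric itself. Both are absorbed into $\calE$ by the regularity of $\bar u_F$ and $\bar u_L$ assumed in the statement. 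For the follower modulated kinetic energy $E_F:=\int\rho_F^N|u_F^N-\bar u_F|^2\sdx$ I would use the standard pressureless-Euler relative-energy identity: after cancellations, the surviving terms are $\int\rho_F^N(u_F^N-\bar u_F)\otimes(u_F^N-\bar u_F):\nabla\bar u_F$ (bounded by $\|\nabla\bar u_F\|_\infty E_F$) together with the source commutators $\nabla W_F*(\rho_F^N-\bar\rho_F)$, $\nabla W_C*(\bar\varrho_L^N-\pi^x_\#\bar\rho_L)$, and the $\phi$-drag coupling, each of which is estimated under \textbf{(A1)}--\textbf{(A4)} by Lipschitz bounds and Cauchy--Schwarz against $d_1$, $\int d_{\mathrm{BL}}^2\,g$, and the leader modulated kinetic energy.

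The hardest step is the leader modulated kinetic energy $E_L:=\tfrac{1}{N}\sum_i\int|\bar v_i^N-\bar u_L(\bar x_i^N,\xi)|^2\,g(\dxi)$. Combining $\ddt\bar u_L(\bar x_i^N,\xi)=[\partial_t\bar u_L+\bar v_i^N\cdot\nabla_x\bar u_L](\bar x_i^N,\xi)$ with the formal Lagrangian momentum equation
\[
\partial_t\bar u_L + \bar u_L\cdot\nabla_x\bar u_L = -(1-\alpha)(x-\xi) - \alpha(x-\langle x\rangle_{\bar\rho_F}) - \bar u_L - \nabla W_L*(\pi^x_\#\bar\rho_L),
\]
obtained from the macro-macro momentum equation on the support of $\bar\rho_L$, the dominant cancellations leave (i) a dissipative $-2E_L$ from the damping; (ii) a $(\bar v_i^N-\bar u_L)\cdot\nabla\bar u_L$ term absorbed by $\|\nabla\bar u_L\|_{L^\infty_g}$; (iii) a follower-center-of-mass drift controlled by $\alpha\, d_1(\rho_F^N,\bar\rho_F)$; (iv) a convolution commutator $\nabla W_L*(\bar\varrho_L^N-\bar\rho_L(\cdot,\xi_*))$ which, integrated against $g(\dxi_*)$ and bounded by $\|\nabla^2 W_L\|_\infty$, yields $\int d_{\mathrm{BL}}^2\,g$; and most delicately (v) the target-mismatch term $(1-\alpha)\tfrac{1}{N}\sum_i\int(\bar v_i^N-\bar u_L(\bar x_i^N,\xi))\cdot(x_{d_i}-\xi)\,g(\dxi)$, which Young's inequality splits into $E_L$ plus exactly $\calD^N$.

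This last bookkeeping is the crux of the argument: the empirical measures $\bar\varrho_L^N$ and $\bar\mu_L^N$ carry no $\xi$-label, so the discrepancy between each particle's fixed target $x_{d_i}$ and the integration variable $\xi\sim g$ must appear on the right-hand side as the external forcing $\calD^N$, which is precisely where the convergence-in-variance hypothesis on $\{x_{d_i}\}$ is used. Assembling the four estimates delivers $\ddt\calE\le C\calE+C\calD^N$, integration yields \eqref{est_miMA}, and the qualitative weak convergences follow by the same passage from modulated-energy control to dual-space convergence that closes the proof of Theorem~\ref{thm:mimi-mima}.
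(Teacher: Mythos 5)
Your proposal is correct and follows essentially the same route as the paper: the same four-part functional combining $d_1$, the bounded-Lipschitz distance averaged in $\xi$, and the two modulated kinetic energies, the same Lagrangian differentiation along the leader and follower momentum equations, the same duality lemmas for propagating the transport metrics, and the same identification of the target-mismatch term as the variance forcing $\calD^N$, all closed by Gr\"onwall. The only step you compress is the $\phi$-drag coupling in the follower energy, where the paper's estimate hinges on \textbf{(A4)} together with the linear growth bound $|\bar u_L(x,\xi)|\le C(1+|x|)$ and the second moment bound $\rho_F^N \in L^\infty(0,T;L^1_2)$ (the same local-boundedness issue that forces the leader momentum convergences to be stated against $C_c^1$ test functions), but your outline is consistent with that treatment.
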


\begin{remark} An argument nearly identical to that in Remark \ref{rmk_mom} shows that $\|\bar \rho_F(t)\|_{L^1_2}$ propagates in time.
\end{remark}

\begin{remark}\label{rem: en}
In contrast to the micro-micro to micro-macro limit (Theorem \ref{thm:mimi-mima}), the analysis of the micro-macro to macro-macro limit (Theorem \ref{thm:mima-mama}) requires particular care in estimating the leader velocity field $\bar u_L$. A central difficulty stems from the presence of the unbounded linear forcing term $-x$ in the leader momentum equation of the macro-macro system.

More precisely, $\bar u_L$ satisfies
\[
\partial_t \bar u_L  + \bar u_L \cdot \nabla \bar u_L = - x + \text{(other terms)},
\]
so that in the whole-space case $\Omega = \R^d$, one cannot generally expect $\bar u_L(t,\cdot,\xi)$ to belong to $L^p(\R^d)$ for any $1 \leq p \leq \infty$ because of the unbounded linear drift. This immediately shows that a global $L^\infty$ bound for $\bar u_L$ is unrealistic. However, our proof requires sufficient regularity of $\bar u_L$ to perform quantitative error estimates in the modulated energy approach. To address this, we carefully track the effect of the $-x$ term and design our estimates so that only {\it local-in-space} boundedness of $\bar u_L$ is needed. Indeed, one can rigorously show that local-in-space bounds of the form:
\[
\bar u_L(\cdot,\cdot,\xi) \in L^\infty(0,T; L^\infty_{\rm loc}(\Omega))
\]
can be propagated in time despite the unbounded forcing. This is the strongest bound that can be reasonably expected in unbounded domains.

Achieving the required error estimates under this weaker, local condition is a key technical contribution of our argument. It ensures that, even without global integrability, the convergence result remains valid. For clarity and to guarantee well-posedness of the limiting system, the rigorous local-in-time existence and uniqueness theory presented in Appendix \ref{app_ffs} is developed under periodic boundary conditions $\Omega = \T^d$, where such global issues are avoided.
\end{remark}

\begin{remark}
When the domain is periodic, i.e., $\Omega = \T^d$, the assumptions on the communication weight $\phi$ can be simplified. For instance, the condition $x \phi(x) \in \mathcal{W}^{1,\infty}(\T^d)$ is no longer needed and can be replaced by the standard assumption $\phi \in \mathcal{W}^{1,\infty}(\T^d)$.
\end{remark}

The study of mean-field limits for interacting particle systems has been a central theme in mathematical physics and PDE theory, with significant advances in the derivation of kinetic and fluid-type equations from microscopic dynamics. Depending on the regularity of the interaction potentials and communication kernels, different methods have been developed to rigorously justify the passage from discrete to continuum descriptions. Classical results for Vlasov-type kinetic equations and aggregation models under Lipschitz or bounded interaction fields can be found in \cite{BH, CCHS19, CFRT10, CFTV, CHL17, HL09, HT08, JW16, JW17, JW18, LP17, Spohn}, while more recent work addresses the challenges posed by singular kernels and weaker regularity, both in kinetic and aggregation-diffusion regimes; see for instance \cite{CCH14, CFI25, Deu16, HJ07, HJ15, S20}. In addition, there have been focused efforts on deriving pressureless Euler-type systems from particle models, particularly those governed by nonlocal attraction-repulsion or alignment forces. Notably, the modulated energy framework developed in \cite{S20} provides the mean-field limit of interacting particle systems with singular potentials, such as the Coulomb or super-Coulombic Riesz potentials. This methodology inspired subsequent developments, including the work in \cite{CC21}, where the authors rigorously derive a multi-dimensional pressureless Euler system with nonlocal interactions as the mean-field limit of a second-order swarming model.

In this spirit, the derivation of the micro-macro and macro-macro systems from the particle model can be understood as a two-scale mean-field limit. In this hierarchy, the follower population transitions from a discrete particle description to continuum-level dynamics governed by kinetic or fluid-type equations, while the leader population retains its particle character in the micro-macro limit and is further approximated by fluid dynamics in the macro-macro limit.

A related methodological framework was developed in \cite{piu1,piu2}, where hybrid micro-macroscopic limits were rigorously derived for multilane traffic models, coupling continuous microscopic dynamics with discrete lane-changing events. This approach highlights an interesting analogy between interacting populations in multi-agent systems and vehicles moving across multiple lanes, where the discrete-continuous interplay plays a key role in the emergent macroscopic behavior.

Our proof strategy builds upon the modulated kinetic energy framework introduced in \cite{CC21, S20}, suitably adapted to the present setting involving two interacting populations. The main analytical tool is a discrete modulated kinetic energy functional designed to measure the discrepancy between the particle-level system and its macroscopic counterpart. For the particle-to-fluid limit of the follower population, we define the functional
\begin{align} \label{Disc Mod Energy}
\begin{aligned}
    \calE(\calZ_F^{N,M}| Z_F^N)(t) &:= \frac{1}{2}\iint_{\Omega\times\R^d} | u_F^N(t,y) - w|^2 \mu_F^{N,M}(t,\dy\dw)\cr
    &= \frac{1}{2M}\sum_{i=1}^M | u_F^N(t,y_i^{N,M}(t)) - w_i^{N,M}(t)|^2,
\end{aligned}
\end{align}
where 
\[
\calZ^{N,M}_L = \{(y_i^{N,M}, w_i^{N,M})\}_{i=1}^M \quad \mbox{and} \quad  Z_F^N = \bp \rho_F^N \\ \rho_F^N u_F^N \ep
\]
are regular solutions to the system \eqref{eq:mimi} and \eqref{eq:mima}, respectively. However, due to the presence of nonlocal interaction terms in both position and velocity, the modulated energy functional alone is insufficient to close the estimates. To address this, we also incorporate the first-order Wasserstein distance and bounded-Lipschitz distance. Although these are equivalent when $\Omega = \T^d$, we use both $d_{\rm BL}$ and  $d_1$ for generality, taking into account that $d_{\rm BL} \leq d_1$ in general. The use of $d_1$ is particularly important for estimating the discrepancy in centers of mass. In the first stage of the mean-field hierarchy, the leaders remain at the discrete level, and we measure the discrepancy using the $\ell^2$-distance. 

In the second stage, the leader population transitions from a particle-level description to a continuum one that incorporates a target variable $\xi \in \Omega$. This variable reflects heterogeneity in the leader targets and is distributed according to a fixed probability measure $g(\xi)$. While each discrete leader is associated with a fixed target $x_{d_i} \in \text{supp}(g)$, the continuum leader density and velocity fields, $\bar\rho_L(t,x,\xi)$ and $\bar u_L(t,x,\xi)$, retain explicit $\xi$-dependence. To meaningfully compare the discrete leader velocities with the continuum field, we introduce a $\xi$-averaged discrete modulated kinetic energy:
\begin{equation} \label{Averaged Disc Mod Energy}
\begin{split}
    \calF(\bar\calZ_L^N| \bar Z_L)(t) &:= \frac{1}{2}\iiint_{\Omega\times\R^d \times \Omega} | \bar u_L(t,x,\xi) - v|^2 \bar\mu_L^N(t,\dx\dv) g(\dxi)  \cr
    &=  \frac{1}{2N}\sum_{i=1}^N \int_{\Omega} \left|\bar v_i(t) - \bar u_L(t,\bar x_i(t),\xi)\right|^2 g(\textnormal{d}\xi),
\end{split}
\end{equation}
where 
\[
\bar \calZ^N_L = \{(\bar x_i^N, \bar v_i^N)\}_{i=1}^N \quad \mbox{and} \quad   \bar Z_L = \bp \bar \rho_L \\ \bar \rho_L \bar u_L \ep
\]
are regular solutions to the system \eqref{eq:mima} and \eqref{eq:mama}, respectively. This expression quantifies the velocity discrepancy averaged over the target distribution $g$ and accounts for the fact that the particle system lacks explicit dependence on $\xi$. The $\xi$-integration ensures a coherent comparison with the limit system and aligns with the structure of the coupling terms in the equations. Such $\xi$-averaged comparisons are used systematically in our estimates, especially when establishing bounds uniform in $\xi$ or when analyzing the coupling between leader and follower sectors across the mean-field hierarchy. As in the earlier stage, both the bounded-Lipschitz and Wasserstein distances are employed to quantify transport-type discrepancies between the particle and continuum descriptions.

For the convergence of the follower population at the fluid level, we adopt the classical modulated energy:
\[
\frac12 \int_{\Omega} \rho^N_F(t) | u^N_F(t) - \bar u_F(t)|^2\,\dx.
\] 
To propagate convergence forward in time, we derive an evolution inequality satisfied by this modulated energy functional. The key step involves differentiating the functional with respect to time and carefully estimating the resulting terms. This requires handling commutator-type errors, bounding the nonlocal interaction terms, and utilizing the regularity of the limiting velocity fields. These estimates allow us to apply Gr\"onwall's inequality and conclude the quantitative convergence.

%%%%%%%%%%%%%%%%%%%%%%%%%%%%%%%%%%%%%%%%%%%%%%%%%%%%%%%%%%
%
%
%
%
%
%
%
%
%
%
%
%
%
%
%%%%%%%%%%%%%%%%%%%%%%%%%%%%%%%%%%%%%%%%%%%%%%%%%%%%%%%%%%
\section{From the particle system to the micro-macro limit}\label{sec:mimi-mima}
In this section, we provide a detailed proof of Theorem \ref{thm:mimi-mima}, which concerns the mean-field limit from the interacting leader-follower particle system \eqref{eq:mimi} to the coupled particle-fluid system \eqref{eq:mima}. This step corresponds to the first level of the mean-field hierarchy, where the follower population becomes a continuum while the leaders remain discrete.

Since the number of leaders $N$ is fixed throughout this section, for notational simplicity we omit the superscripts and the dependence on $N$. We begin by recalling the form of the limiting micro-macro system:
\begin{align} \label{eq: micmac lim}
\begin{aligned}
    &\ddt \bar{x}_i = \bar{v}_i, \quad i = 1,\dots, N, \quad t > 0,\\[1mm]
    &\ddt \bar{v}_i = -(1-\alpha)\left(\bar{x}_i - \bar x_{d_i} \right) -\alpha (\bar{x}_i - \langle x \rangle_{\rho_F} ) - \bar{v}_i - \nabla W_L *  \bar\varrho_{L}^N(\bar{x}_i),\\[1mm]
    &\partial_t \rho_F + \nabla \cdot (\rho_F u_F) = 0, \quad x\in \Omega, \quad t > 0,\\[1mm]
    &\partial_t (\rho_F u_F) + \nabla \cdot (\rho_F u_F\otimes u_F) = -\rho_F\, \nabla W_F * \rho_F - \rho_F\, \nabla W_C *   \bar\varrho_{L}^N \\
    &\hspace{5cm} + \rho_F \iint_{\Omega\times\R^d} \phi(x-y)\,(w - u_F(x))\,\bar\mu_L^N(\dy\dw),
\end{aligned}
\end{align}
where
\[
\langle x \rangle_{\rho_F} = \int_{\Omega} x\rho_F(x)\,\dx, \quad \bar\varrho_{L}^N = \frac1N \sum_{i=1}^N \delta_{\bar x_i},  \quad \mbox{and}  \quad \bar\mu_L^N = \frac1N \sum_{i=1}^N \delta_{(\bar x_i, \bar v_i)}.
\]

The proof proceeds in two main steps. First, we derive uniform estimates and convergence bounds for the leaders, whose dynamics are governed by the discrete particle system. Second, we establish quantitative stability estimates for the followers by comparing the empirical measure associated with the discrete system to the corresponding continuum limit, using a modulated energy framework. The key difficulty lies in coupling the two populations and controlling the interaction error terms, which we handle carefully through a combination of $\ell^2$ estimates, Wasserstein distance bounds, and Gr\"onwall-type arguments.

Throughout this section, we assume that the assumptions {\bf (A1)}--{\bf (A3)} hold. For notational convenience, we introduce the following notation, which will be used consistently in the subsequent analysis:
 \begin{equation*}
 \mathcal{Z}^M_L := \{(x_i^M, v_i^M)\}_{i=1}^N, \quad \mathcal{Z}^M_F := \{(y_i^M, w_i^M)\}_{i=1}^M,
\quad
  \bar{\mathcal{Z}}_L := \{(\bar x_i, \bar v_i)\}_{i=1}^N, \quad Z_F := \bp \rho_F \\ \rho_F u_F \ep.
 \end{equation*}
%%%%%%%%%%%%%%%%%%%%%%%%%%%%%%%%%%%%%%%%%%%%%%%%%%%%%%%%%%
%
%
%
%
%
%
%
%
%
%
%
%
%
%
%%%%%%%%%%%%%%%%%%%%%%%%%%%%%%%%%%%%%%%%%%%%%%%%%%%%%%%%%%
\subsection{Stability estimates for the leader sector}
We begin this part by deriving uniform moment bounds for the microscopic particle system \eqref{eq:mimi}, focusing in particular on the behavior of the leaders. These estimates play a key role in establishing convergence in the subsequent mean-field analysis. To control the discrete modulated energy between the particle system and the limiting model, we require uniform-in-time bounds on the $\ell^2$-norms of the system trajectories. We state the following auxiliary lemma.
\begin{lemma} \label{lem: l2 bdd}
Let $T>0$, and let $(\calZ^M_L, \calZ^M_F)$ be a classical solution to \eqref{eq:mimi} on the time interval $[0,T]$. Then there exists a constant $C>0$, independent of $N$ and $M$, such that
    \begin{align*}
        \frac{1}{N}\sum_{i=1}^N (|x_i|^2 + |v_i|^2) + \frac{1}{M}\sum_{i=1}^M (|y_i|^2+|w_i|^2) \le C.
    \end{align*}
\end{lemma}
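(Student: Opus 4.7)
The plan is to construct a single scalar Lyapunov-type functional, differentiate it in time, and absorb all source terms by the functional itself plus a constant, in order to close a Gr\"onwall inequality. Concretely, set
\[
E(t) := \frac{1}{N}\sum_{i=1}^N \bigl(|x_i(t)|^2+|v_i(t)|^2\bigr) + \frac{1}{M}\sum_{i=1}^M \bigl(|y_i(t)|^2+|w_i(t)|^2\bigr),
\]
so that $\frac{d}{dt}|x_i|^2 = 2 x_i\cdot v_i$ and $\frac{d}{dt}|y_i|^2 = 2 y_i\cdot w_i$ are controlled by Young's inequality, contributing at most $E$. All remaining work lies in the two kinetic terms $\frac{1}{N}\sum_i v_i\cdot \dot v_i$ and $\frac{1}{M}\sum_i w_i\cdot \dot w_i$.

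For the leader kinetic term, the dissipative $-|v_i|^2$ has a favorable sign, the feedback pieces $-(1-\alpha)(x_i-x_{d_i})$ and $-\alpha(x_i-\langle x\rangle_{\varrho_F^M})$ are controlled by Young's inequality together with the a priori bound $|x_{d_i}|\le C$ on the (fixed) targets and the elementary estimate
\[
|\langle x \rangle_{\varrho_F^M}|^2 \;\le\; \frac{1}{M}\sum_{j=1}^M |y_j|^2 \;\le\; E(t),
\]
while the self-interaction $\frac{1}{N}\sum_j \nabla W_L(x_j-x_i)$ is bounded by $\|\nabla W_L\|_{L^\infty}$ thanks to {\bf (A2)}, giving a term of the form $C|v_i|$ absorbed as $C(1+|v_i|^2)$. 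For the follower kinetic term, the self- and cross-interaction contributions with $\nabla W_F$ and $\nabla W_C$ are bounded analogously using {\bf (A2)}, and the alignment term is estimated by
\[
\frac{w_i}{N}\sum_{j=1}^N \phi(x_j-y_i)(v_j-w_i)
\;\le\; C\|\phi\|_{L^\infty}\Bigl(|w_i|^2 + \frac{1}{2N}\sum_{j=1}^N(|w_i|^2+|v_j|^2)\Bigr),
\]
using {\bf (A3)} and Young's inequality. Averaging over $i$ couples the leader and follower kinetic sums without creating any uncontrolled cross-term.

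Collecting these bounds one arrives at a differential inequality of the form $\dot E(t) \le C_1 + C_2\, E(t)$ on $[0,T]$, with constants depending only on $T$, $\|\nabla W_L\|_{L^\infty}$, $\|\nabla W_F\|_{L^\infty}$, $\|\nabla W_C\|_{L^\infty}$, $\|\phi\|_{L^\infty}$, $\sup_i|x_{d_i}|$, and $E(0)$, but independent of $N$ and $M$. Applying Gr\"onwall's lemma concludes the proof. The only genuinely delicate point is the alignment coupling, which mixes follower velocities with leader velocities through the asymmetric weights $\frac{1}{N}$ and $\frac{1}{M}$; the symmetric split $|w_i||v_j|\le\frac{1}{2}(|w_i|^2+|v_j|^2)$ ensures that the resulting double sum is exactly of order $\frac{1}{M}\sum|w_i|^2+\frac{1}{N}\sum|v_j|^2\le E$, so that nothing is lost in the averaging.
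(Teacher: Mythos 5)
Your argument is correct, and it closes the same Gr\"onwall inequality as the paper, but it takes a slightly different route on the intra-population interaction terms. You bound the leader and follower self-interaction forces directly by $\|\nabla W_L\|_{L^\infty}$ and $\|\nabla W_F\|_{L^\infty}$, which is legitimate under {\bf (A2)} since $\nabla W_L,\nabla W_F\in\calW^{1,\infty}(\Omega)$, and then absorb the resulting terms as $C(1+|v_i|^2)$, $C(1+|w_i|^2)$. The paper instead symmetrizes: using the odd symmetry of $\nabla W_L$ (resp.\ $\nabla W_F$), the interaction contribution becomes exactly the time derivative of the potential energy $\frac{1}{2N^2}\sum_{i,j}W_L(x_j-x_i)$, which is then added (together with $m_L=-\tfrac12\inf W_L$, $m_F=-\tfrac12\inf W_F$ from {\bf (A1)}) to the Lyapunov functional, so that the self-interaction is absorbed without ever invoking boundedness of the gradients. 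Your version is simpler and fully sufficient under the stated hypotheses; the paper's version is a bit more robust, since it would survive potentials with bounded-below $W$ but unbounded $\nabla W$, and it keeps the dissipative $-\frac1N\sum|v_i|^2$ intact rather than sacrificing it. The remaining ingredients (Jensen for $|\langle x\rangle_{\varrho_F^M}|^2\le\frac1M\sum|y_j|^2$, Young for the feedback and alignment terms with the symmetric split of $|w_i||v_j|$, and the implicit uniform-in-$N,M$ bounds on $\frac1N\sum|x_{d_i}|^2$ and on the initial empirical energies) are the same in both proofs, so no gap arises from your choices.
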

\begin{proof}
We begin with the energy estimate for the leader particles. A direct computation yields
    \begin{align*}
        &\ddt \left(\frac{1}{2N}\sum_{i=1}^N (|x_i|^2 + |v_i|^2) \right)\\
        &\quad = \frac{1}{N}\sum_{i=1}^N v_i \cdot \left((1-\alpha) x_{d_i} - v_i + \alpha  \langle x \rangle_{\varrho_F^M} + \frac{1}{N}\sum_{j=1}^N \nabla W_L(x_j-x_i) \right) \\
        &\quad = \frac{1}{N}\sum_{i=1}^N \Big( (1-\alpha) x_{d_i}\cdot v_i + \alpha v_i \cdot \langle x \rangle_{\varrho_F^M} \Big) + \frac{1}{2N^2}\sum_{i,j=1}^N (v_i-v_j)\cdot \nabla W_L(x_j-x_i) - \frac{1}{N}\sum_{i=1}^N |v_i|^2.
    \end{align*}
Here the last line is owing to the odd symmetry of $\nabla W_L$. Applying Young's and Jensen's inequalities, we obtain
    \begin{equation}\label{eq: x,v l2}
    \begin{split}
        &\ddt \left(\frac{1}{2N}\sum_{i=1}^N (|x_i|^2+|v_i|^2) + \frac{1}{2N^2}\sum_{i,j=1}^N W_L(x_j-x_i) + m_L \right) + \frac{1}{N}\sum_{i=1}^N |v_i|^2 \\
        &\quad \le \frac{1-\alpha}{N}\sum_{i=1}^N |x_{d_i}|^2 + \frac{1}{N}\sum_{i=1}^N |v_i|^2 + \alpha |\langle y \rangle|^2 \\
        &\quad \le C_\alpha + \frac{1}{N}\sum_{i=1}^N |v_i|^2 + \frac{\alpha}{M}\sum_{j=1}^M |y_j|^2, 
        \end{split}
    \end{equation}
    where (recalling Assumption \textbf{(A1)})
    \[
    m_L := - \frac12\inf_{x \in \Omega} W_L(x).
    \]
Proceeding similarly for the follower dynamics, we get
    \begin{align*}
        &\ddt \left(\frac{1}{2M}\sum_{i=1}^M (|y_i|^2 + |w_i|^2)  + \frac{1}{2M^2}\sum_{i,j=1}^M W_F(y_j-y_i) + m_F \right)\\
        &\quad = \frac{1}{M}\sum_{i=1}^M y_i\cdot w_i + \frac{1}{M}\sum_{i=1}^M w_i \cdot \left( \frac{1}{N}\sum_{j=1}^N \nabla W_C(x_j-y_i) + \frac{1}{N}\sum_{j=1}^N \phi(x_j-y_i)(v_j-w_i) \right) \\
        &\quad \le \frac{1}{2M}\sum_{i=1}^M (|y_i|^2 + |w_i|^2) + \frac{1}{MN}\sum_{i=1}^M \sum_{j=1}^N \Big( C|w_i| |x_j-y_i| + \|\phi\|_{L^\infty}|v_j-w_i| \Big)\\
        &\quad \lesssim \frac{1}{2M}\sum_{i=1}^M (|y_i|^2 + |w_i|^2 ) + \frac{1}{2M}\sum_{j=1}^N (|x_j|^2 + |v_j|^2), 
    \end{align*}
    where
    \[
    m_F := - \frac12\inf_{x \in \Omega} W_F(x).
    \]
    Adding this with \eqref{eq: x,v l2} and then applying Gr\"onwall's lemma completes the proof.
\end{proof}

We now estimate the $\ell^2$-distance between the leaders in the microscopic system and those in the intermediate particle-fluid model. Define
\begin{align*}
    &L_X(t) := \left(\frac{1}{N}\sum_{i=1}^N |x_i-\bar x_i|^2\right)^{1/2}, \quad L_V(t) := \left(\frac{1}{N}\sum_{i=1}^N |v_i - \bar v_i|^2 \right)^{1/2}.
\end{align*}
\begin{lemma} \label{lem: ddt L}Let $(\calZ_L^M, \calZ_F^M)$ and $(\bar \calZ_L, Z_F)$ be classical solutions to the system \eqref{eq:mimi} and \eqref{eq: micmac lim} on the time interval $[0,T]$, respectively. Then there exists a constant $C>0$, independent of $N$ and $M$, such that 
    \begin{align*}
        &\ddt \{L_X(t)\}^2 \le 2L_X(t) L_V(t),\\
        &\ddt \{L_V(t)\}^2 \le  C\left( \{L_X(t)\}^2 + \{L_V(t)\}^2\right) + d_{1}^2(\varrho_F^M(t),\rho_F(t)).
    \end{align*}
\end{lemma}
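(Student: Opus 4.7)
The plan is to differentiate $L_X^2$ and $L_V^2$ directly in time, using the coupled leader ODEs of \eqref{eq:mimi} and \eqref{eq: micmac lim} indexed by the same label $i$ and sharing a common target $x_{d_i}$. For the position bound the computation is immediate:
\[
\ddt L_X^2 = \frac{2}{N}\sum_{i=1}^N (x_i - \bar x_i)\cdot (v_i - \bar v_i),
\]
and an application of the Cauchy--Schwarz inequality in the $\ell^2$-inner product yields $\ddt L_X^2 \le 2 L_X L_V$.

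For the velocity bound I would subtract the two momentum equations. Using the odd symmetry of $\nabla W_L$ implied by Assumption \textbf{(A1)} to rewrite $-\nabla W_L * \bar\varrho_L^N(\bar x_i) = \frac{1}{N}\sum_j \nabla W_L(\bar x_j - \bar x_i)$, and collecting the linear restoring terms via $(1-\alpha) + \alpha = 1$, the difference becomes
\[
\dot v_i - \dot{\bar v}_i = -(x_i - \bar x_i) - (v_i - \bar v_i) + \alpha\bigl(\langle x\rangle_{\varrho_F^M} - \langle x\rangle_{\rho_F}\bigr) + \frac{1}{N}\sum_{j=1}^N\bigl[\nabla W_L(x_j - x_i) - \nabla W_L(\bar x_j - \bar x_i)\bigr].
\]
I would then take the inner product with $v_i - \bar v_i$, average in $i$, and estimate the four resulting contributions: (i) the linear restoring term yields $\le L_X^2 + L_V^2$ by Young's inequality; (ii) the friction contribution is $-2L_V^2 \le 0$ and is discarded; (iii) the interaction double sum, using the uniform Lipschitz bound on $\nabla W_L$ from Assumption \textbf{(A2)} in the form $|\nabla W_L(x_j - x_i) - \nabla W_L(\bar x_j - \bar x_i)| \le \|\nabla^2 W_L\|_{L^\infty}(|x_i - \bar x_i| + |x_j - \bar x_j|)$ followed by a double Jensen/Cauchy--Schwarz step, is bounded by $C(L_X^2 + L_V^2)$.

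The only genuinely nontrivial piece is the center-of-mass mismatch. Since the factor $\langle x\rangle_{\varrho_F^M} - \langle x\rangle_{\rho_F}$ does not depend on $i$, it pulls out of the average to give
\[
\frac{2\alpha}{N}\sum_{i=1}^N (v_i - \bar v_i)\cdot\bigl(\langle x\rangle_{\varrho_F^M} - \langle x\rangle_{\rho_F}\bigr) \le 2\alpha\, L_V\,\bigl|\langle x\rangle_{\varrho_F^M} - \langle x\rangle_{\rho_F}\bigr|
\]
by Jensen and Cauchy--Schwarz. To convert the center-of-mass difference into $d_1$ I would appeal to the Kantorovich--Rubinstein dual formulation: each coordinate map $x \mapsto x_k$ is $1$-Lipschitz, whence $|\langle x\rangle_{\varrho_F^M} - \langle x\rangle_{\rho_F}| \le \sqrt d\, d_1(\varrho_F^M, \rho_F)$, with the first moments finite thanks to Lemma \ref{lem: l2 bdd} and the $L^1_2$-regularity of $\rho_F$ recorded in Remark \ref{rmk_mom}. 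A last Young's inequality converts the product $2\alpha\sqrt d\, L_V\, d_1$ into $CL_V^2 + d_1^2(\varrho_F^M,\rho_F)$, and summing the four pieces produces exactly the stated bound. This Kantorovich step is the sole conceptual obstacle, as it is the only place where the empirical and continuum descriptions of the follower positions meet and where the $d_1^2$ term on the right-hand side originates; all other manipulations are routine symmetrizations and Young's inequalities.
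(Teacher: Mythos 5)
Your proposal is correct and follows essentially the same route as the paper: differentiate, use Cauchy--Schwarz for $L_X^2$, subtract the two leader momentum equations (with the symmetry of $\nabla W_L$ and cancellation of the common targets), estimate the restoring, friction, and interaction terms via Young's inequality and the Lipschitz bound on $\nabla W_L$, and control the center-of-mass mismatch by Kantorovich duality before a final Young step. The only cosmetic difference is that the paper bounds $|\langle x\rangle_{\rho_F}-\langle x\rangle_{\varrho_F^M}|\le d_1(\rho_F,\varrho_F^M)$ directly (projecting onto the direction of the difference), whereas you use coordinate maps and pick up a harmless factor $\sqrt d$ that is absorbed into $C$.
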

\begin{proof}
By the Cauchy--Schwarz inequality, we get
    \begin{align*}
        \ddt \{L_X(t)\}^2 &= \frac{2}{N}\sum_{i=1}^N (x_i-\bar x_i)\cdot (v_i- \bar v_i) \le 2 L_X(t) L_V(t).
    \end{align*}
A similar computation and use of Young's inequality gives
    \begin{align*}
       \ddt \{L_V(t)\}^2 
        &= \frac{2}{N}\sum_{i=1}^N (v_i-\bar v_i) \cdot \Bigg( (\bar x_i - x_i) + (\bar v_i - v_i) \\
        &\qquad \qquad \qquad \qquad \qquad \quad +\frac{1}{N}\sum_{j=1}^N \Big(\nabla W_F(x_j - x_i) - \nabla W_F(\bar x_j - \bar x_i)\Big) + \alpha \left(\langle x \rangle_{\rho_F} - \langle x \rangle_{\varrho_F^M} \right) \Bigg) \\
        &\le \{L_X(t)\}^2 - \{L_V(t)\}^2 + \frac{2\|\nabla W_F\|_{\rm Lip}}{N^2} \sum_{i,j=1}^N \Big(|x_j - \bar x_j| + |x_i - \bar x_i|\Big) |v_i - \bar v_i| \\
        &\quad + \alpha \{L_V(t)\}^2 + \alpha   |\langle x \rangle_{\rho_F}- \langle x \rangle_{\varrho_F^M}|^2 \\
        &\le C\left( \{L_X(t)\}^2 + \{L_V(t)\}^2 \right) + \alpha   |\langle x \rangle_{\rho_F}- \langle x \rangle_{\varrho_F^M}|^2.
    \end{align*}
The last term on the right-hand side of the above is estimated as
\[%    \begin{equation} \label{eq: positional avgs}
    \begin{split}
        |\langle x \rangle_{\rho_F}- \langle x \rangle_{\varrho_F^M}| &= \left|\int_{\Omega} x (\rho_F \dx - \varrho_F^M (\dx) ) \right| \le d_{1}(\rho_F,\varrho_F^M).
    \end{split}
\]%    \end{equation}
This completes the proof.
\end{proof}

%%%%%%%%%%%%%%%%%%%%%%%%%%%%%%%%
%
%
%
%
%
%
%%%%%%%%%%%%%%%%%%%%%%%%%%%%%%%%

\subsection{Stability estimates for follower sector}
We now turn to the quantitative estimates for the followers. This analysis complements the estimates for the leaders and is essential to establishing the convergence of the full system. In particular, we derive an evolution inequality for the modulated energy between the discrete follower population and its continuum limit, which plays a crucial role in the mean-field approximation.

To begin, we recall the governing equations for the follower density and velocity field, assuming $\rho_F > 0$:
\begin{align*}
&    \p_t \rho_F + \nabla\cdot (\rho_F  u_F) = 0,\\
&    \p_t  u_F + (u_F \cdot \nabla)u_F =  - \nabla W_F *  \rho_F - \nabla W_C *  \bar \varrho_L^N + H(\bar \mu_L^N)(\cdot, u_F(t,\cdot)),
\end{align*}
where
\[
H(\bar\mu_L^N)(y,w) := \iint_{\Omega\times\R^d} \phi(x-y) (v-w)\bar\mu_L^N(\dx\dv).
\]
From this formulation, we can compute the time derivative of $u_F$ along the trajectory of each follower:
\begin{equation} \label{eq: pt u}
\begin{split}
   \p_t (u_F(t,y_i(t)))  
    &= \p_t u_F(t,y_i(t)) + (w_i(t) \cdot \nabla_y)  u_F(t,y_i(t)) \\
    &=  - (\nabla_y u_F(t,y_i(t)))^T  u_F(t,y_i(t)) - \nabla W_F *  \rho_F(t,y_i(t)) - \nabla W_C *  \bar \varrho_L^N  (y_i(t)) \\
    &\qquad + H(\bar\mu_L^N)(y_i(t), u_F(t,y_i(t)))  + (w_i(t) \cdot \nabla_y ) u_F(t,y_i(t)).
\end{split}
\end{equation}

We now derive the following estimate for the time evolution of the discrete modulated energy between the particle and continuum follower states. 

\begin{lemma} \label{lem: disc efct est}Let $T>0$, and let $(\calZ_L^M, \calZ_F^M)$ and $(\bar \calZ_L, Z_F)$ be classical solutions to the system \eqref{eq:mimi} and \eqref{eq: micmac lim} on the time interval $[0,T]$, respectively. Then, the discrete modulated energy functional in \eqref{Disc Mod Energy} satisfies the estimate
\[%    \begin{equation} \label{eq: ddt E}
    \begin{split}
        \ddt \calE(\calZ_F^M| Z_F)(t) &\le C \calE(\calZ_F^M| Z_F)(t) + C\Big( d_1(\varrho_F^M(t),\bar\rho_F(t)) + L_X(t) \Big) \sqrt{\calE(\calZ_F^M| Z_F)(t)} \\
        &\qquad + C L_V(t) \sqrt{\calE(\calZ_F^M| Z_F)(t)} + C L_X(t) \sqrt{\calE(\calZ_F^M| Z_F)(t)}
    \end{split}
\]%    \end{equation}
    for constants $C>0$ independent of $N$ and $M$, where
    \[
    \varrho_F^M = \frac1M\sum_{i=1}^M \delta_{y_i}.
    \]
\end{lemma}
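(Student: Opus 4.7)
I would differentiate the discrete modulated energy in time to obtain
\[
\ddt \calE(\calZ_F^M | Z_F)(t) = \frac{1}{M}\sum_{i=1}^M \bigl( u_F(t, y_i) - w_i \bigr) \cdot \Bigl[ \bigl(\p_t u_F + w_i \cdot \nabla u_F\bigr)(t, y_i) - \dot w_i \Bigr],
\]
and then substitute the material-derivative identity \eqref{eq: pt u} for the first term in the bracket together with the $w_i$-equation from \eqref{eq:mimi} for the second. Using the antisymmetry of $\nabla W_F$ and $\nabla W_C$ to recast the discrete potential sums as convolutions against the empirical measures $\varrho_F^M$ and $\varrho_L^{N,M} := \frac{1}{N}\sum_{j=1}^N \delta_{x_j}$, the bracket splits into four natural groups: a convective remainder $\bigl((w_i - u_F) \cdot \nabla\bigr) u_F$; two potential discrepancies $-\nabla W_F * (\rho_F - \varrho_F^M)(y_i)$ and $-\nabla W_C * (\bar\varrho_L^N - \varrho_L^{N,M})(y_i)$; and an alignment discrepancy $H(\bar\mu_L^N)(y_i, u_F(y_i)) - \frac{1}{N}\sum_j \phi(x_j - y_i)(v_j - w_i)$.

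The first three groups are controlled by essentially direct estimates. The convective remainder is bounded pointwise by $\|\nabla u_F\|_{L^\infty}|u_F - w_i|^2$, producing the leading $C\calE$ contribution after summation. For the potential discrepancies, Assumption \textbf{(A2)} gives $\nabla W_F, \nabla W_C \in \calW^{1,\infty}$, so Kantorovich duality yields
\[
|\nabla W_F * (\rho_F - \varrho_F^M)(y_i)| \lesssim d_1(\rho_F, \varrho_F^M), \qquad |\nabla W_C * (\bar\varrho_L^N - \varrho_L^{N,M})(y_i)| \lesssim d_1(\bar\varrho_L^N, \varrho_L^{N,M}) \le L_X,
\]
the last inequality following from the index-aligned coupling $(\bar x_j, x_j)$ and Jensen's inequality. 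Pairing with $u_F - w_i$, summing over $i$, and applying Cauchy--Schwarz to bound $\frac{1}{M}\sum_i |u_F - w_i|$ by $\sqrt{2\calE}$ yields the $d_1(\rho_F, \varrho_F^M)\sqrt{\calE}$ and one of the $L_X \sqrt{\calE}$ contributions.

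The alignment term is the main obstacle and requires a two-step decomposition. I would add and subtract the intermediate quantity $\frac{1}{N}\sum_j \phi(x_j - y_i)(v_j - u_F(y_i))$, splitting the alignment discrepancy into two parts. The first part, $\frac{1}{N}\sum_j \bigl[\phi(\bar x_j - y_i)(\bar v_j - u_F) - \phi(x_j - y_i)(v_j - u_F)\bigr]$, decomposes further by adding and subtracting $\phi(x_j - y_i)(\bar v_j - u_F)$ into a $\phi$-Lipschitz piece bounded by $CL_X$, using Assumption \textbf{(A3)} together with a uniform bound on $|\bar v_j|$ on $[0, T]$ coming from the finite-dimensionality of the leader dynamics in \eqref{eq: micmac lim} and the hypothesized $u_F \in L^\infty$, and a $\phi$-bounded piece bounded by $CL_V$. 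The second part collapses, after writing $v_j - w_i = (v_j - u_F(y_i)) + (u_F(y_i) - w_i)$, to $(w_i - u_F(y_i)) \cdot \frac{1}{N}\sum_j \phi(x_j - y_i)$; dotting with $u_F(y_i) - w_i$ then produces the nonpositive dissipation $-|u_F - w_i|^2 \cdot \frac{1}{N}\sum_j \phi(x_j - y_i) \le 0$, which is simply discarded in the upper bound. A final Cauchy--Schwarz step on the $L_X$ and $L_V$ contributions yields the claimed estimate.
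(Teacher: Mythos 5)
Your strategy is essentially the paper's: differentiate $\calE$ along the follower trajectories via \eqref{eq: pt u}, split the bracket into the convective remainder, the two potential discrepancies, and the alignment discrepancy, and estimate the first three exactly as you describe ($\|\nabla u_F\|_{L^\infty}$ for the convective term, Kantorovich duality for $\nabla W_F$, and the index-aligned coupling giving $d_1(\bar\varrho_L^N,\varrho_L^{N,M})\le L_X$ for $\nabla W_C$). For the alignment term the paper groups differently, writing the discrepancy as $\phi(\bar x_j-y_i)(w_i-u_F)+\phi(\bar x_j-y_i)(\bar v_j-v_j)+[\phi(\bar x_j-y_i)-\phi(x_j-y_i)](v_j-w_i)$ and bounding the first piece crudely by $\|\phi\|_{L^\infty}|w_i-u_F|$ (hence a $\|\phi\|_{L^\infty}\calE$ contribution), whereas you observe that your corresponding piece is a nonpositive dissipation and discard it; that observation is correct (since $\phi\ge 0$) and is a marginal improvement.

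The one step that needs repair is your bound for the Lipschitz piece, where you keep $(\bar v_j-u_F(y_i))$ and invoke a uniform-in-$j$ bound on $|\bar v_j|$ together with $\|u_F\|_{L^\infty}$. A sup bound on the $\bar v_j$ that is uniform in $N$ does not follow from the stated hypotheses: the assumptions only control averaged quantities (mean-square particle velocities as in Lemma \ref{lem: l2 bdd}, $\frac1N\sum_j|x_{d_j}|^2$, second moments of $\rho_F$), so a constant built from $\max_j\sup_t|\bar v_j(t)|$ could a priori grow with $N$, which conflicts with the claimed $N$-independence; moreover the paper deliberately avoids $\|u_F\|_{L^\infty}$ in this quantitative estimate (see Remark \ref{rmk_mom}). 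The paper's grouping sidesteps this: keeping $(v_j-w_i)$ in the Lipschitz difference, applying Cauchy--Schwarz in $j$, and using the mean-square bounds of Lemma \ref{lem: l2 bdd} yields $CL_X\sqrt{\calE}$ with $C$ independent of $N$ and $M$. Your decomposition can also be salvaged by Cauchy--Schwarz in $j$ together with an $\ell^2$-averaged bound on the $\bar v_j$ of the micro-macro system (derivable as in Lemma \ref{lem: l2 bdd}), but as written the uniform bound you invoke is not justified.
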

\begin{proof}
A direct computation, utilizing the calculations in \eqref{eq: pt u}, show
\begin{align*}
   \ddt \calE(\calZ_F^M| Z_F)  
    &= \frac{1}{M}\sum_{i=1}^M [u_F(t,y_i(t))-w_i(t)]\cdot\left( (w_i(t) - u_F(t,y_i(t)) ) \cdot \nabla_y u_F(t,y_i(t))  \right)\\
    &\quad + \frac{1}{M}\sum_{i=1}^M [u_F(t,y_i(t)) - w_i(t)] \cdot \left(\nabla W_F * \varrho_F^M(y_i(t)) - \nabla W_F * \rho_F(y_i(t)) \right) \\
    &\quad + \frac{1}{M}\sum_{i=1}^M [u_F(t,y_i(t)) - w_i(t)] \cdot \left(\nabla W_C * \varrho_L^N(y_i(t)) - \nabla W_C * \bar \varrho_L^N(y_i(t)) \right) \\
    &\quad + \frac{1}{M}\sum_{i=1}^M [u_F(t,y_i(t)) - w_i(t)]\cdot \left(\frac{1}{N}\sum_{j=1}^N \Big[H(\bar{\mu}_L^N)(y_i(t),u_F(t,y_i(t))) - \phi(x_j-y_i)(v_j-w_i) \Big] \right)\\
    &=: I_1 + I_2 + I_3 + I_4. 
\end{align*}
We now estimate each term $I_k$ separately.

$\bullet$ Estimate of $I_1$: We simply have that
\begin{align*}
    I_1 \le \|\nabla u_F\|_{L^\infty} \frac{1}{M}\sum_{i=1}^M |u_F(t,y_i(t)) - w_i(t)|^2 = 2\|\nabla u_F\|_{L^\infty} \calE(\calZ_F^M| Z_F).
\end{align*}

$\bullet$ Estimate of $I_2$: By the Cauchy--Schwarz inequality,
\begin{align*}
    I_2 \le \|\nabla W_F\|_{\calW^{1,\infty}} d_1(\varrho_F^M(t), \rho_F(t)) \sqrt{\calE(\calZ_F^M| Z_F)} .
\end{align*}

$\bullet$ Estimate of $I_3$: By definition of the empirical measure, we obtain
\begin{align*}
    &|\nabla W_C *  \varrho_L^N (y_i) - \nabla W_C *  \bar \varrho_L^N(y_i)| \cr
    &\quad \le \left|\int_{\Omega} \nabla W_C(y_i-y) ( \varrho_L^N -  \bar\varrho_L^N)(\dx) \right| \le \frac1N\sum_{j=1}^N \left|\nabla W_C(y_i - x_j) - \nabla W_C(y_i - \bar x_j)\right| \le \|\nabla W_C\|_{\calW^{1,\infty}} L_X(t).
\end{align*}
This gives us
\begin{align*}
    I_3 \le \|\nabla W_C\|_{\calW^{1,\infty}} L_X(t) \sqrt{\calE(\calZ_F^M| Z_F)}.
\end{align*}

$\bullet$ Estimate of $I_4$: We note that
\begin{align*}
    H(\bar\mu_L^N)(y_i(t),u_F(t,y_i(t))) &:= \iint_{\Omega\times\R^d} \phi(x-y_i(t))(v-u_F(t,y_i(t))) \bar\mu_L^N(\dx\dv) \\
    &= \frac{1}{N}\sum_{j=1}^N \phi(\bar x_j(t) - y_i(t)) (\bar{v}_j(t) - u_F(t,y_i(t)) ). 
\end{align*}
Therefore, the term in parentheses arising in $I_4$ can be written and estimated with Jensen's inequality as
\begin{align*}
    &\frac{1}{N}\sum_{j=1}^N \Big[ \phi(\bar x_j - y_i) (\bar{v}_j - u_F(t,y_i) ) - \phi(x_j-y_i)(v_j-w_i) \Big]\\
    &\quad = \frac{1}{N}\sum_{j=1}^N \Big[ \phi(\bar x_j - y_i) (w_i - u_F(t,y_i)) + \phi(\bar x_j - y_i)(\bar v_j - w_i)  - \phi(x_j-y_i)(v_j-w_i) \Big] \\
    &\quad = \frac{1}{N} \sum_{j=1}^N \Big[ \phi(\bar x_j - y_i) (w_i - u_F(t,y_i)) + \phi(\bar x_j - y_i)(\bar v_j-v_j)  + \left[\phi(\bar x_j - y_i) - \phi(x_j-y_i) \right] (v_j - w_i) \Big]\\
    &\quad \le   \|\phi\|_{L^\infty} |w_i - u_F(t,y_i)|  + \|\phi\|_{L^\infty} L_V(t) + \frac{\|\phi\|_{\rm Lip}}{N}\sum_{j=1}^N |\bar x_j - x_j| \, |v_j- w_i|. 
\end{align*}
Utilizing this and Lemma \ref{lem: l2 bdd}, we find  
\begin{align*}
    I_4 &\le \|\phi\|_{L^\infty}\calE(\calZ_F^M|Z_F) + \|\phi\|_{L^\infty} L_V(t) \sqrt{\calE(\calZ_F^M| Z_F)}   + C\|\phi\|_{\rm Lip}L_X(t) \sqrt{\calE(\calZ_F^M|Z_F)}.
\end{align*}
Collecting all estimates for the $I_i$, we complete the proof.
\end{proof}

Up to this point, Lemmas \ref{lem: ddt L}--\ref{lem: disc efct est} yield the following evolution inequality,
\begin{align*}
    &\ddt \Big(\calE(\calZ_F^M| Z_F) + \{L_X(t)\}^2 + \{L_V(t)\}^2 \Big)  \le C \calE(\calZ_F^M| Z_F) + C d_{1}^2(\varrho_F^M(t),\rho_F(t)) + C\left(\{L_X(t)\}^2 + \{L_V(t)\}^2 \right).
\end{align*}
An application of Gr\"onwall's lemma then provides a bound on the cumulative error up to time $t \in [0,T]$:
\begin{equation}\label{eq: prerestore}
\begin{split}
    &\calE(\calZ_F^M| Z_F)(t) + \{L_X(t)\}^2 + \{L_V(t)\}^2  \\
    &\quad \le C \left(\calE(\calZ_F^M| Z_F)(0) + \{L_X(0)\}^2 + \{L_V(0)\}^2\right) + C \int_0^t d_{1}^2(\varrho_F^M(s),\rho_F(s)) \,\ds. 
\end{split}
\end{equation}

To close this estimate, it remains to bound the term $d_{1}^2(\varrho_F^M(t),\rho_F(t))$ in terms of the modulated energy. This is provided by the following auxiliary result from \cite{CC21} (see also \cite[Proposition 3.1]{Ch21}), whose proof can be adapted to our setting.
\begin{lemma}[Proposition 2.2, \cite{CC21}] \label{lem: d1 sigma} Let $T>0$, and let $\calZ_F^M$ and $ Z_F$ be classical solutions to the system \eqref{eq:mimi} and \eqref{eq: micmac lim} on the time interval $[0,T]$, respectively. Then, there exists a constant $C>0$, dependent only on $\|\nabla u_F\|_{L^\infty}$ and $T$, such that
    \begin{align*}
        d_{1}^2(\varrho_F^M(t),\rho_F(t)) \le C d_{1}^2(\varrho_F^M(0),\rho_F(0)) + C\int_0^t \calE(\calZ_F^M| Z_F)(s) \, \ds. 
    \end{align*}
\end{lemma}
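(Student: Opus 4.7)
The plan is to exploit the Lagrangian (characteristic) flow associated with the continuum velocity field $u_F$ to construct an explicit, almost-optimal coupling between $\varrho_F^M(t)$ and $\rho_F(t)$, and then reduce the problem to controlling the drift of the discrete followers from their ``transported'' initial positions.

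First, since $u_F \in L^\infty(0,T;\calW^{1,\infty}(\Omega))$, I would define the flow $\Phi_t:\Omega\to\Omega$ generated by $u_F$ via
\[
\ddt \Phi_t(x) = u_F(t, \Phi_t(x)), \qquad \Phi_0(x) = x.
\]
Because $\rho_F$ solves the continuity equation $\partial_t \rho_F + \nabla\cdot(\rho_F u_F) = 0$, one has $\rho_F(t) = (\Phi_t)_\# \rho_F(0)$. Setting $\tilde y_i(t) := \Phi_t(y_i(0))$, the measure $(\Phi_t)_\# \varrho_F^M(0) = \frac{1}{M}\sum_{i=1}^M \delta_{\tilde y_i(t)}$ will serve as an intermediate reference, and the triangle inequality gives
\[
d_1(\varrho_F^M(t), \rho_F(t)) \le d_1\bigl(\varrho_F^M(t), (\Phi_t)_\# \varrho_F^M(0)\bigr) + d_1\bigl((\Phi_t)_\# \varrho_F^M(0), (\Phi_t)_\# \rho_F(0)\bigr).
\]

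For the second term, I would use the standard Lipschitz-pushforward property of $d_1$: if $\Phi_t$ is $L_t$-Lipschitz then $d_1(\Phi_{t\#}\mu, \Phi_{t\#}\nu) \le L_t d_1(\mu,\nu)$. From Gr\"onwall applied to $\nabla \Phi_t$, one gets $L_t \le e^{\int_0^t \|\nabla u_F(s)\|_{L^\infty}\ds}$, which yields a factor depending only on $T$ and $\|\nabla u_F\|_{L^\infty}$, giving the initial-data contribution $C d_1^2(\varrho_F^M(0), \rho_F(0))$.

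For the first term, since both measures are empirical with the same weights, the obvious coupling $\frac{1}{M}\sum_i \delta_{(y_i(t), \tilde y_i(t))}$ is admissible, so
\[
d_1\bigl(\varrho_F^M(t), (\Phi_t)_\# \varrho_F^M(0)\bigr) \le \frac{1}{M}\sum_{i=1}^M |y_i(t) - \tilde y_i(t)|.
\]
The point is that $\tilde y_i(0)=y_i(0)$, and
\[
\ddt(y_i - \tilde y_i) = w_i - u_F(t,\tilde y_i) = \bigl(w_i - u_F(t,y_i)\bigr) + \bigl(u_F(t,y_i) - u_F(t,\tilde y_i)\bigr),
\]
so $\ddt |y_i - \tilde y_i| \le |w_i - u_F(t,y_i)| + \|\nabla u_F\|_{L^\infty} |y_i - \tilde y_i|$. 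A Gr\"onwall argument combined with Cauchy--Schwarz in $i$ bounds $\frac{1}{M}\sum_i |w_i - u_F(t,y_i)|$ by $\sqrt{2\calE(\calZ_F^M|Z_F)(t)}$, leading to
\[
\frac{1}{M}\sum_{i=1}^M |y_i(t) - \tilde y_i(t)| \le e^{\|\nabla u_F\|_{L^\infty} T} \int_0^t \sqrt{2\,\calE(\calZ_F^M|Z_F)(s)}\,\ds.
\]
Squaring and applying Cauchy--Schwarz in $s$ finally produces the desired bound $C\int_0^t \calE(\calZ_F^M|Z_F)(s)\,\ds$.

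The main technical point, rather than a deep obstacle, is making sure the coupling is valid on $\Omega$ (in the periodic case $\T^d$ one must lift to representatives in $\R^d$ consistently, but this is routine given Assumption \textbf{(A2)}--\textbf{(A3)}), and ensuring the constant depends only on $\|\nabla u_F\|_{L^\infty}$ and $T$ and not on $N$ or $M$, which is transparent in the argument above since all $N,M$-dependence is absorbed into the modulated energy and the initial Wasserstein distance.
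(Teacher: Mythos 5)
Your proof is correct, and it is essentially the same argument as the one in \cite[Proposition 2.2]{CC21} that the paper invokes without reproducing: compare the discrete followers with the characteristics of $u_F$ (equivalently, the pushforward of the initial data by the flow), use the Lipschitz-pushforward stability of $d_1$ for the initial-data term, and close the trajectory comparison by Gr\"onwall plus Cauchy--Schwarz against the modulated energy. The constants visibly depend only on $\|\nabla u_F\|_{L^\infty}$ and $T$, as required.
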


\begin{remark}
    Although Lemma \ref{lem: d1 sigma} was originally stated in terms of the bounded Lipschitz distance in \cite{CC21}, a careful inspection of its proof shows that the same reasoning applies for the squared $d_1$ distance as stated above.
\end{remark}

%%%%%%%%%%%%%%%%%%%%%%%%%%%%%%%%
%
%
%
%
%
%
%%%%%%%%%%%%%%%%%%%%%%%%%%%%%%%%
\subsection{Proof of Theorem \ref{thm:mimi-mima}: Convergence toward the micro-macro limit system}\label{sec:23}
We are now ready to conclude the proof of Theorem \ref{thm:mimi-mima}, which provides the quantitative mean-field limit from the fully microscopic system to the intermediate particle-fluid model. Building upon the stability estimates derived in the preceding lemmas, we combine the evolution inequality for the modulated energy with a complementary estimate for the Wasserstein distance to close the convergence argument.

As a direct application of Lemma \ref{lem: d1 sigma}, we revisit the inequality \eqref{eq: prerestore} and append the $d_1^2$-distance term to the left-hand side. This yields the bound
\begin{align*}
    &\calE(\calZ_F^M| Z_F)(t) + \{L_X(t)\}^2 + \{L_V(t)\}^2 + d_{1}^2(\varrho_F^M(t),\rho_F(t))  \\
    &\quad \le C \left(\calE(\calZ_F^M| Z_F)(0) + \{L_X(0)\}^2 + \{L_V(0)\}^2\right) \\
    &\qquad + C \int_0^t d_1^2(\varrho_F^M(s),\rho_F(s)) \,\ds + Cd_1^2(\varrho_F^M(0) ,\rho_F(0)) + C\int_0^t \calE(\calZ_F^M| Z_F)(s)\,\ds .
\end{align*}
Applying Gr\"onwall's lemma to the above expression, we obtain the uniform-in-time quantitative estimate
\begin{align*}
    &\calE(\calZ_F^M| Z_F)(t) + \{L_X(t)\}^2 + \{L_V(t)\}^2 + d_1^2(\varrho_F^M(t),\rho_F(t))  \\
    &\quad \le e^{CT}\Big(\calE(\calZ_F^M|Z_F)(0) + \{L_X(0)\}^2 + \{L_V(0)\}^2 + d_1^2(\varrho_F^M(0),\rho_F(0)) \Big)
\end{align*}
for all $t \in [0,T]$, thereby proving the desired stability estimate stated in Theorem \ref{thm:mimi-mima}.

To complete the proof, we now deduce the convergence of the empirical measure $\varrho_F^M$ and the momentum measure $\mu_F^{M}$ in the dual of continuous test functions. Assume that the initial data satisfies the convergence condition \eqref{ass_mimi-mima}, that is,
\begin{align*}
    \calE(\calZ_F^M|Z_F)(0) \to 0, \quad L_X(0) \to 0, \quad L_V(0) \to 0, \quad \text{and} \quad d_1(\varrho_F^M(0), \rho_F(0)) \to 0,
\end{align*}
as $M \to \infty$. Then, from the estimate above, we immediately obtain
\begin{align*}
    \sup_{0\le t \le T} d_1\left(\varrho_F^M(t), \rho_F(t)\right) \to 0 \quad \text{as } M \to \infty,
\end{align*}
which, in turn, implies convergence in the dual of $C_b(\Omega)$:
\begin{align*}
    \frac{1}{M}\sum_{i=1}^M \delta_{y_i^{M}} \weakto \rho_F \quad \text{in } L^\infty(0,T; (C_b(\Omega))^*).
\end{align*}

For the remaining convergence results, we rely on \cite[Lemma 2.1]{CC21} and the inequality $d_{\rm BL} \leq d_1$ to control the discrepancy between kinetic and macroscopic quantities. Specifically, we obtain
\begin{equation} \label{mimi: Conv1 Proof}
d_{\rm BL}^2\left(\int_{\R^d}w\mu_F^{M}(\dw),\, \rho_F u_F \right) \leq C\iint_{\Omega \times \R^d} |w - u_F(y)|^2 \mu_F^{M}(\dy \dw) + Cd_{1}^2(\varrho_F^M,\rho_F), 
\end{equation}
\begin{equation} \label{mimi: Conv2 Proof}
\begin{split}
&d_{\rm BL}^2\left(\int_{\R^d}w\otimes w \mu_F^{M}(\dw),\, \rho_F u_F \otimes u_F \right) \cr
&\quad \leq C\left(\iint_{\Omega \times \R^d} |w - u_F(y)|^2 \mu_F^{M}(\dy \dw)\right)^2 + \iint_{\Omega \times \R^d} |w - u_F(y)|^2 \mu_F^{M}(\dy \dw) +  Cd_{1}^2(\varrho_F^M,\rho_F), 
\end{split}
\end{equation}
and
\[
d_{\rm BL}^2 (\mu_F^M, \rho_F \delta_{u_F}) \leq C\iint_{\Omega \times \R^d} |w - u_F(y)|^2 \mu_F^{M}(\dy \dw) + Cd_{1}^2(\varrho_F^M,\rho_F)
\]
for some $C>0$ independent of $N$ and $M$. The right-hand sides are quantitatively controlled and vanish as $M \to \infty$, in view of our earlier stability estimates. This concludes the desired convergence statements.

%%%%%%%%%%%%%%%%%%%%%%%%%%%%%%%%
%
%
%
%
%
%
%%%%%%%%%%%%%%%%%%%%%%%%%%%%%%%%
%%%%%%%%%%%%%%%%%%%%%%%%%%%%%%%%
%
%
%
%
%
%
%%%%%%%%%%%%%%%%%%%%%%%%%%%%%%%%
\section{From micro-macro description to fully continuum dynamics}\label{sec:mima-mama}

In this section, we investigate the limiting behavior of the micro-macro system \eqref{eq:mima} as the number of leader agents $N$ tends to infinity. The goal is to derive the corresponding macroscopic description of both the leader and follower populations, referred to as the macro-macro system. This limiting system captures the emergent dynamics when the leader agents are no longer modeled as discrete particles, but rather as a continuum described by a density function.

The passage from micro-macro to macro-macro consists of replacing the empirical measures of the leaders with their mean-field limits, leading to a fully continuum model. This reduction is essential for both analytical tractability and computational feasibility in large-scale systems, and it also allows for a deeper understanding of the macroscopic interactions between the leader and follower populations.

We rigorously derive the limiting equations and establish quantitative estimates for the convergence. To this end, we analyze the evolution of modulated energy functionals between the discrete and continuum systems, and provide Wasserstein-type estimates on the corresponding empirical measures. We also introduce suitable compatibility conditions and structural assumptions to ensure well-posedness of the limit and consistency between the different layers of the mean-field hierarchy.

The macro-macro system obtained in the limit reads:
\begin{equation} \label{eq: macmac lim}
\begin{split}
    &\p_t \bar \rho_L + \nabla_x\cdot (\bar \rho_L \bar u_L) = 0, \quad (x,\xi)\in \Omega\times\textnormal{supp}(g), \quad t>0,\\
    &\p_t (\bar \rho_L \bar u_L) + \nabla_x \cdot (\bar \rho_L \bar u_L \otimes \bar u_L) = -(1-\alpha) \bar \rho_L (x-\xi) - \alpha \bar \rho_L (x - \langle x \rangle_{\bar \rho_F}) - \bar \rho_L \bar u_L\\
    &\qquad\qquad\qquad\qquad\qquad\qquad\qquad - \bar \rho_L \iint_{\Omega \times \Omega} \nabla W_L(x-y)\bar \rho_L(t,y,\xi_*)\,g(\textnormal{d}\xi_*) \dy ,\\
    & \p_t \bar \rho_F + \nabla_x \cdot (\bar \rho_F \bar u_F) = 0,  \quad x\in \Omega, \quad t > 0,\\
    &\p_t (\bar \rho_F \bar u_F) + \nabla_x \cdot (\bar \rho_F \bar u_F \otimes \bar u_F ) = - \bar \rho_F \nabla W_F *_x \bar \rho_F - \bar \rho_F \iint_{\Omega \times \Omega} \nabla W_C(x-y) \bar \rho_L(t,y,\xi_*)g(\textnormal{d}\xi_*) \dy \\
    &\qquad \qquad \qquad \qquad \qquad \qquad \qquad + \bar \rho_F \iint_{\Omega \times \Omega} \phi(x-y) (\bar u_L (t,y,\xi_*) - \bar u_F(t,x) ) \bar \rho_L(t,y,\xi_*)g(\textnormal{d}\xi_*) \dy.
\end{split}
\end{equation}

Throughout this section, we assume as usual that the assumptions {\bf (A1)}--{\bf (A4)} hold. For notational simplicity, we introduce the following notation throughout this section:
 \[
  \bar{\mathcal{Z}}_L^N := \{(\bar x_i, \bar v_i)\}_{i=1}^N, \quad \bar Z_L := \bp \bar \rho_L \\ \bar \rho_L \bar u_L \ep,  \quad Z_F := \bp \rho_F \\ \rho_F u_F \ep, \quad \bar Z_F := \bp \bar \rho_F \\ \bar \rho_F \bar u_F \ep.
 \]

%%%%%%%%%%%%%%%%%%%%%%%%
\subsection{Stability estimates for the leader sector}

We now derive quantitative estimates for the leaders, which play a central role in the passage from the micro-macro system \eqref{eq:mima} to its macroscopic limit. As in the previous sections, our main tool is the modulated energy method, allowing us to compare the discrete particle description with the continuum model and to track how discrepancies evolve over time. This subsection mirrors the analysis performed for the followers, and complements the overall convergence framework by controlling the leader dynamics in the mean-field regime.

We begin by observing that on the support $\bar \rho_L>0$, the macroscopic velocity field satisfies
\begin{align*}
    \p_t \bar u_L + (\bar u_L\cdot \nabla) \bar u_L = -(1-\alpha)(x-\xi) - \alpha(x-\langle x \rangle_{\bar \rho_F}) - \bar u_L - \iint_{\Omega \times \Omega} \nabla W_L(x-y)\bar \rho_L(t,y,\xi_*)g(\textnormal{d}\xi_*)\dy,
\end{align*}
and thus along the particle trajectory  
\begin{align*}
    \p_t \bar u_L(t,\bar x_i(t),\xi)  &= (\nabla_x \bar u_L(t,\bar x_i(t),\xi) ) (\bar v_i(t) - \bar u_L(t,\bar x_i(t),\xi))  -(1-\alpha)(\bar x_i(t)- \xi) - \alpha (\bar x_i(t) - \langle x \rangle_{\bar \rho_F}) \\
    &\quad - \bar u_L(t,\bar x_i(t),\xi)  - \iint_{\Omega \times \Omega} \nabla W_L (\bar x_i(t) - y) \bar \rho_L(t,y,\xi_*) g(\textnormal{d}\xi_*)\, \dy  ,
\end{align*}
from which we obtain the following lemma.
\begin{lemma} \label{lem: macmac disc ener} Let $T>0$, and let $(\bar \calZ_L^N, Z_F)$ and $(\bar Z_L, \bar Z_F)$ be classical solutions to the system \eqref{eq: micmac lim} and \eqref{eq: macmac lim} on the time interval $[0,T]$, respectively. Then, the averaged discrete modulated energy functional between the leaders \eqref{Averaged Disc Mod Energy} satisfies the estimate
    \begin{align*}
        \calF(\bar \calZ_L^N|\bar Z_L)(t) &\le \calF(\bar \calZ_L^N|\bar Z_L)(0) + C \int_0^t \calF(\bar \calZ_L^N | \bar Z_L)(s)\,\ds  + C\int_0^t d_{1}^2(\rho_F(s), \bar \rho_F(s)) \,\ds \\
        &\quad + C \int_0^t \left(\int_{\Omega} d_{\rm BL} \left(\bar\varrho_L^N(s), \bar \rho_L(s,\cdot,\xi) \right) g(\textnormal{d}\xi) \right)^2 \ds \\
        &\quad + C  \iint_{\Omega \times \Omega} |\xi_* - \xi|^2 \left(\frac{1}{N}\sum_{i=1}^N \delta_{x_{d_i}}(\dxi)\right) \otimes g(\dxi_*)
    \end{align*}
    with the constants $C>0$ dependent only on $\|\nabla_x \bar u_L\|_{L^\infty(0,T;L^\infty_g)}$, $\|\nabla W_L\|_{\calW^{1,\infty}}$, and $T$.
\end{lemma}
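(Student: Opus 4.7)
The plan is to differentiate $\calF(\bar\calZ_L^N|\bar Z_L)$ in time along the trajectories of the intermediate system \eqref{eq: micmac lim}, subtracting the leader ODE for $\bar v_i$ from the material-derivative formula for $\bar u_L(t,\bar x_i(t),\xi)$ displayed just above the lemma. The linear drift terms $-\alpha\bar x_i$, $-(1-\alpha)\bar x_i$, and the friction $-\bar v_i + \bar u_L$ partially cancel, leaving five contributions to $\ddt(\bar v_i - \bar u_L(t,\bar x_i(t),\xi))$: (i) the friction residual $-(\bar v_i - \bar u_L)$; (ii) the transport residual $-(\bar v_i - \bar u_L)\cdot\nabla_x\bar u_L$ produced by the material derivative; (iii) the target mismatch $(1-\alpha)(x_{d_i}-\xi)$; (iv) the follower center-of-mass mismatch $\alpha(\langle x\rangle_{\rho_F}-\langle x\rangle_{\bar\rho_F})$; and (v) the leader self-interaction mismatch $-\nabla W_L * \bar\varrho_L^N(\bar x_i) + \int_\Omega (\nabla W_L *\bar\rho_L(\cdot,\xi_*))(\bar x_i)\,g(\dxi_*)$.

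\textbf{Estimating the residuals.} I take the inner product with $(\bar v_i - \bar u_L)$, integrate against $g(\dxi)$, and average over $i$. Term (i) gives $-2\calF\le 0$ and is discarded. Term (ii) is bounded by $2\|\nabla_x \bar u_L\|_{L^\infty(0,T;L^\infty_g)}\calF$. For (iii), Cauchy--Schwarz and Young's inequality yield a $C\calF$ part plus exactly the term $C\iint_{\Omega\times\Omega}|\xi_*-\xi|^2\bigl(\tfrac1N\sum_i\delta_{x_{d_i}}(\dxi)\bigr)\otimes g(\dxi_*)$ appearing in the statement (up to relabeling). For (iv), the center-of-mass discrepancy is bounded by $d_1(\rho_F,\bar\rho_F)$ by testing the Wasserstein distance against the coordinate function, so after Young's inequality this contribution gives $C\calF + Cd_1^2(\rho_F,\bar\rho_F)$.

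\textbf{The interaction mismatch.} For term (v), since $\nabla W_L \in \calW^{1,\infty}(\Omega)$ is a bounded Lipschitz integrand, one obtains
\[
\biggl|\int_\Omega \nabla W_L(\bar x_i - y)\Bigl[\bar\varrho_L^N(\dy) - \Bigl(\int_\Omega \bar\rho_L(y,\xi_*)\,g(\dxi_*)\Bigr)\dy\Bigr]\biggr| \le \|\nabla W_L\|_{\calW^{1,\infty}}\, d_{\rm BL}\Bigl(\bar\varrho_L^N,\, \int_\Omega \bar\rho_L(\cdot,\xi_*)\,g(\dxi_*)\Bigr).
\]
Applying the dual characterization of $d_{\rm BL}$ and Jensen's inequality to the convex map $\mu\mapsto d_{\rm BL}(\bar\varrho_L^N,\mu)$ gives
\[
d_{\rm BL}\Bigl(\bar\varrho_L^N,\, \int_\Omega \bar\rho_L(\cdot,\xi_*)\,g(\dxi_*)\Bigr) \le \int_\Omega d_{\rm BL}\bigl(\bar\varrho_L^N,\, \bar\rho_L(\cdot,\xi_*)\bigr)\,g(\dxi_*).
\]
Testing against $(\bar v_i - \bar u_L)$ and Young's inequality then contribute $C\calF + C\bigl(\int_\Omega d_{\rm BL}(\bar\varrho_L^N,\bar\rho_L(\cdot,\xi))\,g(\dxi)\bigr)^2$. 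Collecting all terms produces a differential inequality $\ddt\calF \le C\calF + Cd_1^2(\rho_F,\bar\rho_F) + C\bigl(\int d_{\rm BL}\cdots g\bigr)^2 + C\iint |\xi_*-\xi|^2\cdots$, and integrating from $0$ to $t$ yields the lemma.

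\textbf{Main obstacle.} The delicate step is the interaction mismatch in (v): the discrete measure $\bar\varrho_L^N$ carries no $\xi$-dependence while the continuum density $\bar\rho_L(\cdot,\xi)$ does, so one must first average $\bar\rho_L$ against $g$ before invoking a bounded-Lipschitz estimate, and then pass from the $d_{\rm BL}$ to the $g$-averaged $d_{\rm BL}$ by Jensen. This is what produces the $\bigl(\int d_{\rm BL}(\bar\varrho_L^N,\bar\rho_L(\cdot,\xi))\,g(\dxi)\bigr)^2$ term of the statement, rather than a (harder to control) distance in the $\xi$-variable. A minor subtlety is that the target-mismatch contribution is time-independent and therefore acquires only a factor of $t\le T$ upon time integration, absorbed into the constant $C$.
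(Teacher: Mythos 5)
Your proposal is correct and follows essentially the same route as the paper: differentiate $\calF$ along trajectories, drop the non-positive friction term, bound the transport residual by $\|\nabla_x\bar u_L\|_{L^\infty_g}\calF$, the target mismatch by the $g$-averaged variance term, the center-of-mass mismatch by $d_1(\rho_F,\bar\rho_F)$, and the $W_L$-interaction mismatch by the $g$-averaged bounded-Lipschitz distance, then integrate in time. The only cosmetic difference is in the interaction term, where you invoke convexity/Jensen for $\mu\mapsto d_{\rm BL}(\bar\varrho_L^N,\mu)$ while the paper simply pulls the $g(\dxi_*)$-integral outside the dual pairing before estimating pointwise in $\xi_*$; both give the same bound.
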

\begin{proof}
The proof proceeds by direct differentiation of the energy functional and term-by-term estimation of the resulting contributions. In particular, we obtain four main terms arising from the chain rule:
    \begin{align*}
        &\ddt \calF(\bar \calZ_L^N|\bar Z_L) \\
        &\quad = \int_{\Omega} \Bigg\{ \frac{\alpha}{N}\sum_{i=1}^N (v_i-\bar u_L(\bar x_i,\xi)) \cdot (\langle x \rangle_{\rho_F} - \langle x \rangle_{\bar \rho_F})  -\frac{1}{N}\sum_{i=1}^N (v_i - \bar u_L(\bar x_i,\xi)) \cdot ((v_i - \bar u_L(\bar x_i,\xi))\cdot \nabla_x) \bar u_L(\bar x_i,\xi)) \\
        &\qquad + \frac{1-\alpha}{N}\sum_{i=1}^N (v_i-\bar u_L(\bar x_i,\xi)) \cdot (x_{d_i} - \xi) \\
        &\qquad + \frac{1}{N}\sum_{i=1}^N \Bigg((v_i - \bar u_L(\bar x_i,\xi))  \left(-  \nabla W_L * \bar\varrho_L^N(\bar x_i) -  \iint_{\Omega \times \Omega} \nabla W_L(\bar x_i - y)  \bar \rho_L(y,\xi_*)g(\textnormal{d}\xi_*)\dy \right) \Bigg) \Bigg\} g(\textnormal{d}\xi).\\
        &\quad =: I_1 + I_2 + I_3 + I_4.
    \end{align*}
    $\bullet$ Estimate of $I_1$: Using the Cauchy--Schwarz inequality and $\alpha \le 1$, we show
    \begin{align*}
        I_1 \le \calF(\bar \calZ_L^N|\bar Z_L) + |\langle x \rangle_{\rho_F} - \langle x \rangle_{\bar \rho_F}|^2.
    \end{align*}
    We remark that the second term is estimated easily with Kantorovich duality as
\begin{align*}
    |\langle x \rangle_{\rho_F} - \langle x \rangle_{\bar \rho_F}|^2 &= \left|\int_{\Omega} x(\rho_F(x) - \bar \rho_F(x))\dx\right|^2 \le d_1^2( \rho_F, \bar \rho_F).
\end{align*}
    $\bullet$ Estimate of $I_2$: We easily find
    \begin{align*}
        I_2 \le \sup_{0\le t \le T} \|\nabla_x \bar u_L(t)\|_{L^\infty_g} \calF(\bar \calZ_L^N|\bar Z_L).
    \end{align*}
    $\bullet$ Estimate of $I_3$: Similarly as in $I_1$, we may estimate
    \begin{align*}
        I_3 &\le \calF(\bar \calZ_L^N|\bar Z_L) + \frac{1}{N}\sum_{i=1}^N \int_{\Omega} |x_{d_i} - \xi|^2 g(\textnormal{d}\xi) \\
        &= \calF(\bar \calZ_L^N|\bar Z_L) + \frac{1}{N}\sum_{i=1}^N \iint_{\Omega \times \Omega} |\bar \xi - \xi|^2 \delta_{x_{d_i}}(\textnormal{d}\bar\xi) \otimes  g(\textnormal{d}\xi).
    \end{align*}
    $\bullet$ Estimate of $I_4$: The nonlocal interaction is controlled using the bounded Lipschitz distance as
    \begin{align*}
        &\left|\nabla W_L * \bar\varrho_L^N(\bar x_i) -  \iint_{\Omega \times \Omega} \nabla W_L(\bar x_i - y)  \bar \rho_L(y,\xi_*)g(\textnormal{d}\xi_*)\dy\right|\\
%        &\quad = \int_{\Omega} \nabla W_L(\bar x_i(t) - y) \left\{\frac{1}{N}\sum_{j=1}^N \delta_{\bar x_j(t)}(\dy) - \int_{\Omega} \bar \rho_L(t,y,\xi) \dy \, g(\textnormal{d}\xi) \right\} \\
        &\quad = \left|\iint_{\Omega \times \Omega}\nabla W_L(\bar x_i-y)  \left( \bar\varrho_L^N(\dy) - \bar \rho_L(y,\xi_*)\dy \right) g(\textnormal{d}\xi_*)\right|\\
        &\quad \le \|\nabla W_L\|_{\calW^{1,\infty}} \int_{\Omega} d_{\rm BL}\left(\bar\varrho_L^N, \bar \rho_L(\cdot,\xi) \right) g(\textnormal{d}\xi),
    \end{align*}
    by exploiting the fact that the above quantity is in fact independent of $\xi$. This, the Cauchy--Schwarz inequality, and the arithmetic-geometric inequality yield
    \begin{align*}
        I_4 \le \calF(\bar \calZ_L^N|\bar Z_L) + C \left(\int_{\Omega} d_{\rm BL}\left(\bar\varrho_L^N, \bar \rho_L(\cdot,\xi) \right) g(\textnormal{d}\xi) \right)^2 ,
    \end{align*}
    for some $C>0$ dependent on $\|\nabla W_L\|_{\calW^{1,\infty}}$. Combining all terms and applying Gr\"onwall's lemma yields the desired bound.
\end{proof}

We also record a useful auxiliary estimate, again adapted from \cite[Proposition 2.2]{CC21}. It bounds the evolution of the empirical measure of leaders in the bounded Lipschitz distance.
\begin{lemma} Let $T>0$, and let $\bar\calZ_L^N$ and $\bar Z_L$ be classical solutions to the system \eqref{eq: micmac lim} and \eqref{eq: macmac lim} on the time interval $[0,T]$, respectively. Then, we have
    \begin{align*}
        &\int_{\Omega} d_{\rm BL}^2\left(\bar\varrho_L^N(t), \bar \rho_L(t,\cdot,\xi) \right) g(\textnormal{d}\xi)   \le C  \int_0^t \calF(\bar \calZ_L^N | \bar Z_L)(s) \,\ds + C \int_{\Omega} d_{\rm BL}^2\left(\bar\varrho_L^N(0), \bar \rho_L(0,\cdot,\xi) \right) g(\textnormal{d}\xi),
    \end{align*}
where $C>0$ is a constant depending only on $T$, $\|\psi\|_{Lip \cap L^\infty}$, and $\sup_{0 \le t \le T}   \|\nabla_x \bar u_L(t)\|_{L^\infty_g}$. 
\end{lemma}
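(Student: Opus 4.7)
The plan is to adapt the strategy of \cite[Proposition 2.2]{CC21} to the present $\xi$-averaged setting by introducing an auxiliary transported empirical measure that flows the particle configuration along the limiting leader velocity field $\bar u_L$, and then applying the triangle inequality. For each fixed $\xi \in \textnormal{supp}(g)$, I would define the ODE flow
\[
\partial_t \tilde X_i(t,\xi) = \bar u_L(t,\tilde X_i(t,\xi),\xi), \qquad \tilde X_i(0,\xi) = \bar x_i(0),
\]
and set $\tilde\varrho_L^N(t,\cdot,\xi) := N^{-1}\sum_{i=1}^N \delta_{\tilde X_i(t,\xi)}$. By construction, both $\tilde\varrho_L^N(\cdot,\cdot,\xi)$ and $\bar\rho_L(\cdot,\cdot,\xi)$ solve the continuity equation driven by $\bar u_L(\cdot,\cdot,\xi)$, but with distinct initial data $\bar\varrho_L^N(0)$ and $\bar\rho_L(0,\cdot,\xi)$. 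The triangle inequality then yields
\[
d_{\rm BL}\bigl(\bar\varrho_L^N(t),\bar\rho_L(t,\cdot,\xi)\bigr) \le d_{\rm BL}\bigl(\bar\varrho_L^N(t),\tilde\varrho_L^N(t,\cdot,\xi)\bigr) + d_{\rm BL}\bigl(\tilde\varrho_L^N(t,\cdot,\xi),\bar\rho_L(t,\cdot,\xi)\bigr).
\]

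For the second term, since both measures are push-forwards under the same flow $\Phi_t(\cdot,\xi)$ of $\bar u_L(\cdot,\cdot,\xi)$, the assumption $\nabla_x\bar u_L \in L^\infty(0,T;L^\infty_g)$ gives a uniform-in-$\xi$ Lipschitz bound $\|\Phi_t(\cdot,\xi)\|_{\rm Lip} \le e^{CT}$. A standard dual-norm computation (for any $f$ with $\|f\|_{\rm BL}\le 1$, the composition $f\circ\Phi_t(\cdot,\xi)$ is still bounded and Lipschitz with constants controlled by $\|\Phi_t\|_{\rm Lip}$) then produces
\[
d_{\rm BL}^2\bigl(\tilde\varrho_L^N(t,\cdot,\xi),\bar\rho_L(t,\cdot,\xi)\bigr) \le C\,d_{\rm BL}^2\bigl(\bar\varrho_L^N(0),\bar\rho_L(0,\cdot,\xi)\bigr),
\]
with $C$ depending only on $T$ and $\|\nabla_x\bar u_L\|_{L^\infty(0,T;L^\infty_g)}$. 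For the first term, by using the diagonal coupling one has $d_{\rm BL} \le d_1 \le N^{-1}\sum_i |\bar x_i - \tilde X_i|$, so the Cauchy--Schwarz inequality gives
\[
d_{\rm BL}^2\bigl(\bar\varrho_L^N(t),\tilde\varrho_L^N(t,\cdot,\xi)\bigr) \le \frac{1}{N}\sum_{i=1}^N |\bar x_i(t) - \tilde X_i(t,\xi)|^2.
\]

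To close this, I would differentiate $|\bar x_i - \tilde X_i(\cdot,\xi)|^2$ in time, using $\partial_t \bar x_i = \bar v_i$ and $\partial_t\tilde X_i = \bar u_L(t,\tilde X_i,\xi)$, add and subtract $\bar u_L(t,\bar x_i,\xi)$, and apply Young's inequality together with the Lipschitz regularity of $\bar u_L$. Averaging in $i$ and integrating against $g(\dxi)$, the inhomogeneous term on the right-hand side becomes precisely $2\,\calF(\bar\calZ_L^N|\bar Z_L)(t)$. Since $\tilde X_i(0,\xi) = \bar x_i(0)$, the Grönwall inequality together with a vanishing initial datum yields
\[
\int_\Omega \frac{1}{N}\sum_{i=1}^N |\bar x_i(t) - \tilde X_i(t,\xi)|^2\, g(\dxi) \le C\int_0^t \calF(\bar\calZ_L^N|\bar Z_L)(s)\,\ds.
\]
Squaring the triangle inequality, integrating over $\xi$ against $g$, and combining the two bounds gives the claimed estimate. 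The only delicate point, which is ultimately routine here, is to ensure uniform-in-$\xi$ stability of $d_{\rm BL}$ under the push-forward by the flow of $\bar u_L$: this is guaranteed exactly by the a priori local regularity assumption on $\nabla_x\bar u_L$, consistent with the discussion in Remark \ref{rem: en} on why no global bound on $\bar u_L$ itself is needed.
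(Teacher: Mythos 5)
Your argument is correct and is essentially the paper's own route: the lemma is proved by adapting \cite[Proposition 2.2]{CC21}, i.e.\ exactly your construction of auxiliary characteristics transported by $\bar u_L(\cdot,\cdot,\xi)$ from the particle initial positions, a triangle inequality in $d_{\rm BL}$, stability of the push-forward under the Lipschitz flow (using $\nabla_x \bar u_L \in L^\infty(0,T;L^\infty_g)$), and a Gr\"onwall argument whose source term is the $\xi$-averaged modulated energy $\calF$. The only cosmetic discrepancy is the constant's stated dependence on $\|\psi\|_{{\rm Lip}\cap L^\infty}$ in the lemma, which is an artifact of the adaptation and does not affect your proof.
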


 %%%%%%%%%%%%%%%%%%%%%%%%%%%%%%%%
%
%
%
%
%
%
%%%%%%%%%%%%%%%%%%%%%%%%%%%%%%%%

\subsection{Stability estimates for follower sector}
We now complement the previous convergence analysis by establishing quantitative estimates for the follower dynamics. Specifically, we aim to control the distance between two follower distributions, $\rho_F$ and $\bar \rho_F$, together with their associated velocity fields $u_F$ and $\bar u_F$. These estimates are crucial for closing the mean-field hierarchy and confirming the consistency between the coupled micro-macro and macro-macro formulations.

We begin with a Wasserstein-type estimate for the density difference, which follows in the same spirit of Lemma \ref{lem: d1 sigma}.

\begin{lemma} Let $T>0$, and let $Z_F$ and $\bar Z_F$ be classical solutions to the system \eqref{eq: micmac lim} and \eqref{eq: macmac lim} on the time interval $[0,T]$, respectively. Then, there exists a constant $C>0$, depending only on $\|\rho_F\|_{L^1}$,  $\|\nabla \bar u_F\|_{L^\infty}$, and $T$, such that 
    \begin{align*}
        &d_{1}(\rho_F(t), \bar \rho_F(t)) \le C d_{1}(\rho_F(0), \bar \rho_F(0)) + C \left(\int_0^t \int_{\Omega} \rho_F |u_F - \bar u_F|^2\,\dx\ds\right)^{1/2}.
    \end{align*}
\end{lemma}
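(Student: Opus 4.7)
The plan is to reduce the statement to the classical stability lemma for solutions of the continuity equation, following the argument of \cite[Proposition 2.2]{CC21}. Concretely, I would introduce the characteristic flows $\Phi_t$ and $\bar\Phi_t$ associated with the velocity fields $u_F$ and $\bar u_F$, which are well defined since both solutions are classical and $\bar u_F$ is assumed Lipschitz. Using these flows, one has the representations $\rho_F(t)=\Phi_t{}_\#\rho_F(0)$ and $\bar\rho_F(t)=\bar\Phi_t{}_\#\bar\rho_F(0)$, so that if $\gamma_0$ is an optimal transport plan realizing $d_1(\rho_F(0),\bar\rho_F(0))$, then $(\Phi_t,\bar\Phi_t)_\#\gamma_0$ is an admissible coupling between $\rho_F(t)$ and $\bar\rho_F(t)$.

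With this coupling in hand, the next step is to define $D(t):=\iint_{\Omega\times\Omega}|\Phi_t(x)-\bar\Phi_t(y)|\,\gamma_0(\dx\dy)\geq d_1(\rho_F(t),\bar\rho_F(t))$ and derive a Gr\"onwall-type inequality. Differentiating in time and splitting the integrand as
\[
u_F(s,\Phi_s(x))-\bar u_F(s,\bar\Phi_s(y))=\bigl(u_F-\bar u_F\bigr)(s,\Phi_s(x))+\bigl(\bar u_F(s,\Phi_s(x))-\bar u_F(s,\bar\Phi_s(y))\bigr),
\]
the second piece is controlled by $\|\nabla\bar u_F\|_{L^\infty}|\Phi_s(x)-\bar\Phi_s(y)|$ and contributes a term proportional to $D(s)$. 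For the first piece, I would project onto the first marginal and perform the change of variables $z=\Phi_s(x)$ to obtain
\[
\iint|u_F-\bar u_F|(s,\Phi_s(x))\,\gamma_0(\dx\dy)=\int_\Omega|u_F-\bar u_F|(s,z)\,\rho_F(s,\dz).
\]

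At this point the main step of the argument is to pass from this $L^1$-type quantity to the square-integrated quantity that appears in the right-hand side of the statement. This is done by a single application of the Cauchy--Schwarz inequality with respect to the measure $\rho_F(s)\,\dz$:
\[
\int_\Omega|u_F-\bar u_F|\rho_F\,\dz\leq\|\rho_F(s)\|_{L^1}^{1/2}\left(\int_\Omega\rho_F|u_F-\bar u_F|^2\,\dz\right)^{1/2},
\]
combined with the fact that $\|\rho_F(s)\|_{L^1}=\|\rho_F(0)\|_{L^1}$ is conserved along the flow. Assembling the estimates yields
\[
D(t)\leq d_1(\rho_F(0),\bar\rho_F(0))+\|\nabla\bar u_F\|_{L^\infty}\int_0^tD(s)\,\ds+C\int_0^t\left(\int_\Omega\rho_F|u_F-\bar u_F|^2\,\dx\right)^{1/2}\ds,
\]
and Gr\"onwall's lemma followed by a Cauchy--Schwarz inequality in the time variable produces the announced bound, with a constant depending only on $\|\rho_F\|_{L^1}$, $\|\nabla\bar u_F\|_{L^\infty}$, and $T$.

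The only technical subtlety I anticipate is ensuring that the characteristic representation is legitimately applicable; this is covered by the classical regularity of $(\rho_F,u_F)$ and $(\bar\rho_F,\bar u_F)$ and by the Lipschitz regularity of $\bar u_F$ that is already folded into the constants. Note in particular that no regularity of $\nabla u_F$ is required in the final bound, since the $u_F$-flow is used only to transport masses, and the difference $u_F-\bar u_F$ appears undifferentiated.
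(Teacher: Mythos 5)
Your argument is correct and is exactly the argument the paper has in mind: the lemma is stated there without a written proof, with a pointer to the same flow-map/coupling estimate as in Lemma \ref{lem: d1 sigma} (adapted from \cite[Proposition 2.2]{CC21}), i.e.\ push forward an optimal initial coupling by the two characteristic flows, split the velocity difference, use $\|\nabla\bar u_F\|_{L^\infty}$ on one piece and Cauchy--Schwarz with mass conservation on the other, then apply Gr\"onwall and Cauchy--Schwarz in time. Your closing observation that only the Lipschitz bound on $\bar u_F$ (and not on $u_F$) enters the constant matches the dependence stated in the lemma.
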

 
 Next, we estimate the macroscopic modulated kinetic energy, which is the $L^2$-norm of the velocity difference between $u_F$ and $\bar u_F$ weighted by the density $\rho_F$. The proof follows a standard energy estimate argument, which involves integrating the difference of the momentum equations and controlling all resulting nonlinear terms.
 
\begin{lemma} Let $T>0$, and let $(\bar \calZ_L^N, Z_F)$ and $(\bar Z_L, \bar Z_F)$ be classical solutions to the system \eqref{eq: micmac lim} and \eqref{eq: macmac lim} on the time interval $[0,T]$, respectively.
Assume that
\begin{align*}
    &\phi(x) \in \calW^{1,\infty}_1(\Omega).
\end{align*}
Then the following inequality holds:
    \begin{align*}
        \int_{\Omega} \rho_F |u_F - \bar u_F|^2\dx &\le \int_{\Omega}\rho_F(0)|u_F(0)-\bar u_F(0)|^2\,\dx+ C\int_0^t d_{\rm BL}^2(\rho_F(s), \bar \rho_F(s))\,\ds \\
        &\quad + C \int_0^t \int_{\Omega} \rho_F |u_F - \bar u_F|^2\,\dx\ds  + C\int_0^t \left(\int_{\Omega} d_{\rm BL}\left(\bar\varrho_L^N(s) , \bar \rho_L(s,\cdot,\xi)\right) g(\dxi)\right)^2 \ds \\
        &\quad + C \int_0^t \calF(\bar \calZ_L^N|\bar Z_L)(s)\,\ds
    \end{align*}
    with the constant $C>0$ independent of $N$.  
\end{lemma}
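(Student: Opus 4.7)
The plan is to derive a differential inequality for the modulated kinetic energy $\int_{\Omega} \rho_F |u_F - \bar u_F|^2\,\dx$ and then integrate in time. Using the continuity equation for $\rho_F$ together with the non-conservative momentum equations for $u_F$ and $\bar u_F$, a direct computation (integration by parts on the transport term and taking the inner product with $\rho_F(u_F - \bar u_F)$) yields
\[
\ddt \int_{\Omega} \rho_F |u_F - \bar u_F|^2\,\dx = -2\int_{\Omega} \rho_F (u_F - \bar u_F)\cdot \bigl((u_F - \bar u_F)\cdot \nabla\bigr)\bar u_F\,\dx + 2\int_{\Omega} \rho_F (u_F - \bar u_F)\cdot(R_F + R_C + R_\phi)\,\dx,
\]
where $R_F$, $R_C$, $R_\phi$ denote the discrepancies coming from the self-interaction potential $W_F$, the cross-interaction potential $W_C$, and the alignment coupling, respectively. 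The transport-type contribution is immediately bounded by $2\|\nabla \bar u_F\|_{L^\infty}\int_{\Omega} \rho_F |u_F - \bar u_F|^2\,\dx$.

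For $R_F = -\nabla W_F*(\rho_F - \bar\rho_F)$, the hypothesis $\nabla W_F \in \calW^{1,\infty}$ gives $\|R_F\|_{L^\infty}\leq \|\nabla W_F\|_{\calW^{1,\infty}}\, d_{\rm BL}(\rho_F, \bar\rho_F)$; Cauchy--Schwarz (using $\rho_F\in L^1$) followed by Young's inequality then contributes $C\int_{\Omega} \rho_F |u_F - \bar u_F|^2\,\dx + C\,d_{\rm BL}^2(\rho_F, \bar\rho_F)$. For $R_C$, since $g$ is a probability measure, one writes the discrepancy as $-\int g(\dxi_*)\,\nabla W_C*(\bar\varrho_L^N - \bar\rho_L(\cdot,\xi_*))$ and estimates each factor again via $\nabla W_C \in \calW^{1,\infty}$, yielding the pointwise bound $\|R_C\|_{L^\infty}\leq \|\nabla W_C\|_{\calW^{1,\infty}}\int_{\Omega} d_{\rm BL}(\bar\varrho_L^N, \bar\rho_L(\cdot,\xi))\,g(\dxi)$, which is absorbed by the same Cauchy--Schwarz/Young argument.

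The principal difficulty is the alignment discrepancy $R_\phi = K_{\rm mM} - K_{\rm MM}$. Inserting $\pm \bar u_L(y,\xi_*)$ in the first kernel and $\pm u_F(x)$ in the second, one decomposes $R_\phi$ into three pieces: (i) $\iint g(\dxi_*)\phi(x-y)(w-\bar u_L(y,\xi_*))\bar\mu_L^N(\dy\dw)$, estimated via Cauchy--Schwarz in both $j$ and $\xi_*$ by $\|\phi\|_{L^\infty}\sqrt{2\calF(\bar\calZ_L^N|\bar Z_L)}$, and hence contributing $C\int_{\Omega} \rho_F |u_F - \bar u_F|^2\,\dx + C\calF$; (ii) $\iint g(\dxi_*)\phi(x-y)(\bar u_L(y,\xi_*)-u_F(x))\bigl(\bar\varrho_L^N(\dy)-\bar\rho_L(y,\xi_*)\dy\bigr)$, a bounded-Lipschitz piece; and (iii) $-(u_F-\bar u_F)(x)\iint g(\dxi_*)\phi(x-y)\bar\rho_L(y,\xi_*)\dy$, a self-term that is absorbed into $C\int_{\Omega} \rho_F |u_F - \bar u_F|^2\,\dx$ using $\phi\in L^\infty$ and the uniform mass bound on $\bar\rho_L$.

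The hard part is piece (ii), and this is precisely where the weighted hypothesis $\phi\in \calW^{1,\infty}_1$ is needed: $\bar u_L(\cdot,\xi_*)$ is only locally bounded in the whole-space case (Remark \ref{rem: en}), so no naive global $L^\infty$ bound on $\phi(x-\cdot)\bar u_L(\cdot,\xi_*)$ is available. However, from $|x|\phi, |x||\nabla\phi|\in L^\infty$ together with the local bound on $\bar u_L$ and the global bound on $\nabla \bar u_L$, one checks that $y\mapsto \phi(x-y)(\bar u_L(y,\xi_*)-u_F(x))$ has bounded-Lipschitz norm at most $C(1+|x|+|u_F(x)|)$ uniformly in $\xi_*\in \textnormal{supp}\,g$, since $|\bar u_L(y,\xi_*)|\lesssim 1+|y|\lesssim (1+|x|)(1+|x-y|)$ and each derivative in $y$ loses only a bounded factor. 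Consequently piece (ii) is controlled pointwise by $C(1+|x|+|u_F(x)|)\int_{\Omega} d_{\rm BL}(\bar\varrho_L^N, \bar\rho_L(\cdot,\xi))\,g(\dxi)$; pairing with $\rho_F(u_F-\bar u_F)$, Cauchy--Schwarz and the moment bounds $\rho_F\in L^\infty_tL^1_2$ (Remark \ref{rmk_mom}) and $u_F\in L^\infty$ deliver the estimate $C\int_{\Omega} \rho_F |u_F - \bar u_F|^2\,\dx + C\bigl(\int_{\Omega} d_{\rm BL}(\bar\varrho_L^N,\bar\rho_L(\cdot,\xi))\,g(\dxi)\bigr)^2$. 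Collecting all bounds and integrating from $0$ to $t$ produces the claimed integral inequality.
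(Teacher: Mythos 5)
Your overall strategy coincides with the paper's: the same relative-energy differential inequality, the same treatment of the transport term and of the $W_F$ and $W_C$ discrepancies, and the same handling of the alignment error by inserting $\bar u_L(y,\xi_*)$, with the weighted hypothesis $\phi\in\calW^{1,\infty}_1(\Omega)$ invoked exactly where the paper uses it, namely to compensate for the merely local boundedness of $\bar u_L$ through the linear growth $|\bar u_L(t,y,\xi)|\le C(1+|y|)$ and the second moment of $\rho_F$. Your algebraic decomposition of the alignment discrepancy into the three pieces (i)--(iii) is correct, and your bounded-Lipschitz duality bound for the $W_F$ term is fine (indeed it gives $d_{\rm BL}$ directly, consistent with the statement).

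There is, however, one step that as written undermines the key claim that $C$ is independent of $N$. In piece (ii) you insert $\pm u_F(x)$ rather than $\pm\bar u_F(x)$, so your bounded-Lipschitz bound for $y\mapsto\phi(x-y)(\bar u_L(y,\xi_*)-u_F(x))$ carries the factor $|u_F(x)|$, which you then control by invoking ``$u_F\in L^\infty$''. But $u_F=u_F^N$ is the micro-macro (hence $N$-dependent) velocity, and neither this lemma nor Theorem \ref{thm:mima-mama} assumes any uniform-in-$N$ bound on it; the only $L^\infty$-type hypotheses are on the limit fields $\bar u_F$ and (locally) $\bar u_L$. Hence your constant depends on $\|u_F^N\|_{L^\infty}$, and the asserted $N$-independence---the whole point of the estimate, since it feeds the Gr\"onwall argument for the mean-field limit---is not established. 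The repair is exactly the paper's choice of insertion: add and subtract $\bar u_F(x)$, so the bounded-Lipschitz piece carries $|\bar u_F(x)|\le\|\bar u_F\|_{L^\infty}$ (assumed, $N$-independent), and the leftover self-term becomes $(u_F-\bar u_F)(x)\iint_{\Omega\times\Omega}\phi(x-y)\,\bar\varrho^N_L(\dy)\,g(\dxi)$, which is absorbed into the modulated energy since $\bar\varrho^N_L$ is a probability measure; alternatively, split $|u_F|\le|u_F-\bar u_F|+\|\bar u_F\|_{L^\infty}$ and use that $\int_\Omega d_{\rm BL}(\bar\varrho^N_L,\bar\rho_L(\cdot,\xi))\,g(\dxi)$ is a priori bounded, so the extra quadratic term is still absorbed. (Your reliance on the uniform bound for $\int_\Omega(1+|x|^2)\rho_F\,\dx$ is shared with the paper's own proof, cf.\ Remark \ref{rmk_mom}, and is not an additional issue.)
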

\begin{proof}
    A direct computation shows that
    \begin{align*}
        &\frac{1}{2} \ddt\int_{\Omega} \rho_F |u_F - \bar u_F|^2\dx \\
        &\quad = \frac{1}{2}\int_{\Omega} -\nabla\cdot (\rho_F u_F) |u_F - \bar u_F|^2\,\dx + \int_{\Omega} \rho_F (u_F - \bar u_F)\cdot ((\nabla \bar u_F) \bar u_F - (\nabla u_F)u_F ) \,\dx \\
        &\qquad + \int_{\Omega} \rho_F (u_F - \bar u_F)\cdot \nabla W_F * (\bar \rho_F - \rho_F) \,\dx \\
        &\qquad +\int_{\Omega} \rho_F (u_F - \bar u_F)\cdot \Bigg[ \iint_{\Omega \times \Omega} \nabla W_C(x-y)\bar \rho_L(t,y,\xi)g(\textnormal{d}\xi)\dy - \nabla W_C * \bar\varrho_L^N) \Bigg] \dx \\
        &\qquad + \int_{\Omega} \rho_F (u_F - \bar u_F)\cdot \Bigg\{\iint_{\Omega\times \R^d}\phi(x-y)(w-u_F(x))\bar\mu_L^N(\dy\dw)  \\
        &\hspace{4.5cm} - \iint_{\Omega \times \Omega} \phi(x-y)(\bar u_L(t,y,\xi) - \bar u_F(t,x)) \bar \rho_L(t,y,\xi)g(\dxi)\dy  \Bigg\} \dx \\
        &\quad =: \sum_{i=1}^5 K_i.
    \end{align*}
    $\bullet$ Estimate of $K_1$ and $K_2$: We telescope the difference and then integrate by parts in order to obtain
    \begin{align*}
        K_2 &= \int_{\Omega} \rho_F (u_F - \bar u_F)\cdot (\nabla \bar u_F)(\bar u_F - u_F)\,\dx + \int_{\Omega} \rho_F (u_F - \bar u_F)\cdot (\nabla \bar u_F - \nabla u_F) u_F \,\dx \\
        &= \int_{\Omega} \rho_F (u_F - \bar u_F)\cdot (\nabla \bar u_F)(\bar u_F - u_F)\,\dx + \frac{1}{2}\int_{\Omega} \rho_F (u_F - \bar u_F)\otimes (u_F - \bar u_F): \nabla u_F \,\dx \\
        &\quad + \frac{1}{2}\int_{\Omega} (\nabla \rho_F \cdot u_F) |u_F - \bar u_F|^2\,\dx.
    \end{align*}
    This computation, along with the chain rule applied to $K_1$, shows
    \begin{align*}
        K_1 + K_2 &= -\frac{1}{2}\int_{\Omega} \rho_F \nabla\cdot u_F |u_F - \bar u_F|^2\,\dx - \int_{\Omega} \rho_F (u_F - \bar u_F)\cdot \nabla \bar u_F (u_F - \bar u_F) \,\dx \\
        &\quad + \frac{1}{2}\int_{\Omega} \rho_F(u_F-\bar u_F) \otimes (u_F - \bar u_F) :\nabla u_F\,\dx \\
        &\le C\|\nabla \bar u_F\|_{L^\infty} \int_{\Omega} \rho_F |u_F - \bar u_F|^2\,\dx.
    \end{align*}
    $\bullet$ Estimate of $K_3$: By the Cauchy--Schwarz inequality and utilizing that $\nabla W_F\in \calW^{1,\infty}(\Omega)$, we obtain
    \begin{align*}
        K_3 &\le \left(\int_{\Omega} \rho_F |u_F - \bar u_F|^2\,\dx \right)^{1/2} \left(\int_{\Omega} \rho_F |\nabla W_F * (\bar \rho_F - \rho_F)|^2\,\dx \right)^{1/2} \\
        &\le \int_{\Omega} \rho_F|u_F - \bar u_F|^2\,\dx + \|\nabla W_F\|_{\calW^{1,\infty}} d_{1}^2(\bar \rho_F(t), \rho_F(t)) \|\rho_F\|_{L^1}.
    \end{align*}
    $\bullet$ Estimate of $K_4$: Notice that $\nabla W_C * \bar\varrho_L^N$ is independent of $\xi$, and may therefore be rewritten as
    \begin{align*}
        \nabla W_C *  \bar\varrho_L^N &= \iint_{\Omega \times \Omega} \nabla W_C(x-y) \bar\varrho_L^N(\dy) g(\dxi).
    \end{align*}
    This shows
    \begin{align*}
        K_4 &= \int_{\Omega} \rho_F (u_F - \bar u_F)\cdot  \iint_{\Omega \times \Omega} \nabla W_C(x-y) (\bar \rho_L(t,y,\xi)\dy - \bar\varrho_L^N(\dy)) g(\dxi)\,\dx \\
        &\le \|\nabla W_C\|_{\calW^{1,\infty}}\left( \int_{\Omega} \rho_F |u_F - \bar u_F| \,\dx \right) \left( \int_{\Omega} d_{\rm BL}\left(\bar\varrho_L^N , \bar \rho_L(t,\cdot,\xi) \right) g(\dxi) \right) \\
        &\le \|\nabla W_C\|_{\calW^{1,\infty}} \|\rho_F\|_{L^1}^{1/2}\left(\int_{\Omega} \rho_F |u_F - \bar u_F|^2 \,\dx \right)^{1/2}\left( \int_{\Omega} d_{\rm BL}\left(\bar\varrho_L^N, \bar \rho_L(t,\cdot,\xi)\right) g(\dxi) \right) \\
        &\le C \int_{\Omega}\rho_F|u_F - \bar u_F|^2\,\dx + C \left( \int_{\Omega} d_{\rm BL}\left(\bar\varrho_L^N, \bar \rho_L(t,\cdot,\xi)\right) g(\dxi) \right)^2.
    \end{align*}
    $\bullet$ Estimate of $K_5$: We observe first that
    \begin{align*}
      \iint_{\Omega\times\R^d} \phi(x-y) (w- u_F(x)) \bar\mu_L^N(\dy\dw)  =       \iiint_{\Omega\times\R^d\times\Omega} \phi(x-y) (w- u_F(x)) \bar\mu_L^N(\dy\dw)g(\dxi).
    \end{align*}
    Therefore, we rewrite the term ($=: \bar K_5(x)$) in the curly brackets $\{\}$ of $K_5$ as
    \begin{align*}
        \bar K_5(x)         &= \iint_{\Omega \times \Omega}\phi(x-y) \bar u_L(y,\xi) \left\{\bar\varrho^N_L(\dy) - \bar \rho_L(y,\xi)\dy \right\} g(\dxi) \\
        &\quad + \iiint_{\Omega\times\R^d\times\Omega} \phi(x-y) (w- \bar u_L(y,\xi)) \bar\mu_L^N(\dy\dw)g(\dxi) \\
        &\quad + \bar u_F(x) \iint_{\Omega \times \Omega} \phi(x-y)\left\{\bar \rho_L(y,\xi)\dy - \bar\varrho^N_L(\dy) \right\}g(\dxi) \\
        &\quad + (\bar u_F(x) - u_F(x))\iint_{\Omega \times \Omega} \phi(x-y) \bar\varrho^N_L(\dy) g(\dxi) \\
        &=: \sum_{i=1}^4 \bar K_{5,i}(x).
    \end{align*}
     We manipulate and estimate $\bar K_{5,1}$ further as such
 \begin{align*}
        |\bar K_{5,1}(x)| &\le \left|\iint_{\Omega \times \Omega} \phi(x-y) (\bar u_L(y,\xi) - \bar u_L(x,\xi)) \left\{\bar\varrho^N_L(\dy) - \bar \rho_L(y,\xi)\dy \right\} g(\dxi)\right|  \\
        &\quad + \left| \iint_{\Omega \times \Omega} \bar u_L(x,\xi) \phi(x-y) \left\{\bar\varrho^N_L(\dy) - \bar \rho_L(y,\xi)\dy \right\} g(\dxi) \right| \\
        &\le \sup_{\xi\in\textnormal{supp}(g)} \|\phi(x-\cdot)(\bar u_L(\cdot,\xi)-\bar u_L(x,\xi))\|_{\calW^{1,\infty}} \int_{\Omega} d_{\rm BL}\left(\bar\varrho^N_L, \bar \rho_L(\cdot,\xi) \right) g(\dxi) \\
        &\quad + \|\phi\|_{\calW^{1,\infty}} \int_{\Omega} |\bar u_L(x,\xi)| d_{\rm BL}\left(\bar\varrho^N_L, \bar \rho_L(\cdot,\xi)\right) g(\dxi).
    \end{align*}
    To handle the first term of the right-hand side let us set $f(y) = \phi(x-y)(\bar u_L(y,\xi)-\bar u_L(x,\xi))$ for a fixed $x \in \Omega$ and $\xi \in {\rm supp}(g)$. Then, we get
    \[
    |f(y)| \leq \|\nabla \bar u_L\|_{L^\infty_g} |x-y|\phi(x-y) \leq \|\nabla \bar u_L\|_{L^\infty_g}   \|\phi\|_{\calW^{1,\infty}_1}
    \]
    and
    \begin{align*}
    |\nabla f(y)| &\le |\nabla \phi(x-y)| |\bar u_L(y,\xi)-\bar u_L(x,\xi)| + |\phi(x-y)| |\nabla \bar u_L(y,\xi)|\cr
    &\leq \|\nabla \bar u_L\|_{L^\infty_g}|\nabla \phi(x-y)| |x-y| + \|\phi\|_{L^\infty} \|\nabla \bar u_L\|_{L^\infty_g} \cr
    &\leq \|\nabla \bar u_L\|_{L^\infty_g} \left(\| \phi \|_{\calW^{1,\infty}_1 } + \|\phi\|_{L^\infty} \right).
    \end{align*}     
 Thus, we deduce
    \begin{align*}
        &|\bar K_{5,1}(x)|  \le C\int_{\Omega} (1 + |\bar u_L(x,\xi)|) d_{\rm BL}\left(\bar\varrho^N_L, \bar \rho_L(\cdot,\xi)\right) g(\dxi).
    \end{align*}
    The remaining terms of $\bar K_5$ are easily estimated as
    \begin{align*}
        &|\bar K_{5,2}| \le C\|\phi\|_{L^\infty} \sqrt{\calF(\bar \calZ_L^N | \bar Z_L)(t)},\\
        &|\bar K_{5,3}| \le \|\bar u_F\|_{L^\infty} \|\phi\|_{\calW^{1,\infty}} \int_{\Omega} d_{\rm BL}\left(\bar\varrho^N_L, \bar \rho_L(\cdot,\xi)\right) g(\dxi), \\
        &|\bar K_{5,4}| \le \|\phi\|_{L^\infty} |\bar u_F(x)- u_F(x)| .
    \end{align*}
   Collecting the estimates, we deduce
    \begin{align*}
      |K_5| 
        &\le  C \iint_{\Omega \times \Omega} \rho_F(x) |u_F(x) - \bar u_F(x)| (1+|\bar u_L(x,\xi)|)\,d_{\rm BL}\left(\bar\varrho^N_L, \bar \rho_L(\cdot,\xi)\right)g(\dxi)\dx \\
        &\quad + C \calF(\bar \calZ_L^N | \bar Z_L) + C \int_{\Omega} \rho_F|u_F-\bar u_F|^2\,\dx  + C \left(\int_{\Omega} d_{\rm BL}\left(\bar\varrho^N_L, \bar \rho_L(\cdot,\xi)\right)g(\dxi) \right)^2 .
    \end{align*} 
 The assertion of the lemma now follows as soon as we are able to appropriately estimate the integral which lies in the first line of the right-hand side. Applying the Cauchy--Schwarz inequality, first with respect to the $x$-integral and then to the $\xi$-integral, it is in fact majorized as
    \begin{align*}
        &\iint_{\R^{2d}} \rho_F(x)|u_F(x)-\bar u_F(x)| (1+|\bar u_L(x,\xi)|) d_{\rm BL}\left(\bar\varrho^N_L, \bar \rho_L(\cdot,\xi)\right) g(\dxi)\dx \\
        &\quad \le \left(\int_{\Omega} \rho_F(x)|u_F(x)-\bar u_F(x)|^2\dx \right)^{1/2}  \int_{\Omega}\left(\int_{\Omega} \rho_F(x) (1+|\bar u_L(x,\xi)|)^2 \dx \right)^{1/2} d_{\rm BL} \left(\bar\varrho^N_L,\bar \rho_L(\cdot,\xi)\right)g(\dxi) \\
        &\quad \le \int_{\Omega} \rho_F|u_F-\bar u_F|^2\dx  + \left(\iint_{\Omega \times \Omega} \rho_F(x) (1+|\bar u_L(x,\xi)|)^2 g(\dxi)\dx \right) \left(\int_{\Omega} d_{\rm BL}^2 \left(\bar\varrho^N_L,\bar \rho_L(\cdot,\xi)\right)g(\dxi) \right) \\
        &\quad \le \int_{\Omega}\rho_F|u_F - \bar u_F|^2\,\dx + C \int_{\Omega} d^2_{\rm BL}\left(\bar\varrho^N_L, \bar \rho_L(\cdot,\xi) \right)g(\dxi),
    \end{align*}
    where the last line follows owing to the estimate
    \begin{align*}
      |\bar u_L(t,x,\xi)| &\le |\bar u_L(t,0,\xi)| + \|\nabla \bar u_L(t,\cdot,\xi)\|_{L^\infty} |x| \\
      &\le \sup_{t\in [0,T]} \left(\|\bar u_L(t)\|_{L^\infty_g(B_1)} + \|\nabla \bar u_L(t)\|_{L^\infty_g} |x| \right) \\
      &\le C(1 + |x|),
    \end{align*} 
    which provides
    \begin{align*}
        \iint_{\Omega \times \Omega} \rho_F(x) |\bar u_L(x,\xi)|^2 g(\dxi)\dx &\le C\int_{\Omega} (1+|x|^2) \rho_F(t,x)\,\dx  \le C.
    \end{align*}
    This completes the proof.
\end{proof}

%%%%%%%%%%%%%%%%%%%%%%%%%%%%%%%%
%
%
%
%
%
%
%%%%%%%%%%%%%%%%%%%%%%%%%%%%%%%%

\subsection{Proof of Theorem \ref{thm:mima-mama}: Convergence toward the macro-macro limit system} 

We now complete the convergence analysis for the macro-macro limit by gathering all previous estimates obtained for the leader and follower components. These collectively yield a stability bound for the distance between the empirical measures of the interacting particle system and the limiting macroscopic densities.

In particular, combining the estimates derived in the previous two subsections, we obtain the following inequality:
    \begin{align*}
        &\calF(\bar \calZ_L^N|\bar Z_L)(t) + \int_{\Omega} d_{\rm BL}^2\left(\bar\varrho^N_L(t), \bar \rho_L(t,\cdot,\xi)\right) g(\dxi)  + d_{1}^2(\rho_F(t), \bar \rho_F(t)) + \int_{\Omega} \rho_F |u_F - \bar u_F|^2\,\dx  \\
        &\quad \le \calF(\bar \calZ_L^N|\bar Z_L)(0) + C  \int_{\Omega} d_{\rm BL}^2\left(\bar\varrho^N_L(0), \bar \rho_L(0,\cdot,\xi)\right)g(\dxi) + C d_{1}^2(\rho_F(0), \bar \rho_F(0) )\\
        &\qquad + C \int_0^t \calF(\bar \calZ_L^N| \bar Z_L)(s) \,\ds + C \int_0^t d_{1}^2( \rho_F(s),\bar \rho_F(s)) \,\ds+C \int_0^t \int_{\Omega} d_{\rm BL}^2\left(\bar\varrho^N_L(s), \bar \rho_L(s,\cdot,\xi)\right)g(\dxi)\ds \\
        &\qquad  + C\int_0^t \int_{\Omega} \rho_F|u_F- \bar u_F|^2\,\dx\ds  + C   \iint_{\Omega \times \Omega} |\xi_* - \xi|^2 \left(\frac{1}{N}\sum_{i=1}^N \delta_{x_{d_i}}(\dxi)\right) \otimes g(\dxi_*).
    \end{align*}
The result then follows by applying Gr\"onwall's lemma to absorb the time-integrated terms.

Together, these bounds establish control over the discrepancy between the follower distributions and velocities in the macro-macro system. By Gr\"onwall-type arguments, one can propagate small initial discrepancies forward in time, provided that the leader dynamics are sufficiently regular and the coupling terms remain bounded. This provides a rigorous foundation for the validity of the mean-field closure in the follower sector and justifies the macro-macro approximation.

The structure of the convergences asserted in Theorem \ref{thm:mima-mama} closely parallels that of Theorem \ref{thm:mimi-mima}. The only key difference in the proof of the convergence arises when showing
\begin{align*}
    &\frac{1}{N}\sum_{i=1}^N v_i^N(t) \delta_{x_i^N(t)} \to \bar\rho_L(t,\cdot,\xi) \bar u_L(t,\cdot,\xi) \quad \text{weakly}, \\
    &\frac{1}{N} v_i^N \otimes v_i^N \delta_{x_i^N(t)} \to \bar\rho_L(t,\cdot,\xi) \bar u_L(t,\cdot,\xi) \otimes \bar u_L(t,\cdot,\xi) \quad \text{weakly}.
\end{align*}
Indeed, following \cite[Lemma 2.1]{CC21}, we observe that in showing the analogues of \eqref{mimi: Conv1 Proof}--\eqref{mimi: Conv2 Proof}, one must control for a given test function $\varphi\in \calW^{1,\infty}(\Omega)$ the localized interaction term
\begin{align*}
    \sup_{\xi\in \textnormal{supp}(g)} \|\varphi \bar u_L(t,\cdot,\xi)\|_{\rm Lip} .
\end{align*} 
However, $\bar u_L$ is only expected to be locally bounded, and thus the estimate of this term is difficult in our setting. To ensure control of this term, we restrict ourselves to those weight functions $\varphi$ that are compactly supported. Under this assumption, we have
\begin{align*}
    \|\varphi \bar u_L(t,\cdot,\xi)\|_{\rm Lip} \le \|\varphi\|_{L^\infty} \|\nabla \bar u_L \|_{L^\infty} + \|\nabla \varphi\|_{L^\infty} \|\bar u_L\|_{L^\infty(\textnormal{supp}(\varphi))},
\end{align*}
and thus the required control follows from the $L^\infty$ bounds on $\nabla \bar u_L$ and the local $L^\infty$ bounds for $\bar u_L$. In particular, this allows us to deduce the slightly weaker analogues of $\eqref{eq: thm micmic conv}_3$--$\eqref{eq: thm micmic conv}_4$:
\begin{align*}
    &\frac{1}{N}\sum_{i=1}^N v_i^N(t) \delta_{x_i^N(t)} - \bar \rho_L(t,\cdot,\xi) \bar u_L(t,\cdot,\xi) \quad \text{in } L^\infty(0,T;L^1((\Omega,g(\dxi)); (C_c^1(\Omega))^*)),\\
    &\frac{1}{N} v_i^N \otimes v_i^N \delta_{x_i^N(t)} \to \bar\rho_L(t,\cdot,\xi) \bar u_L(t,\cdot,\xi) \otimes \bar u_L(t,\cdot,\xi) \quad \text{in }L^\infty(0,T;L^1((\Omega,g(\dxi)); (C_c^1(\Omega))^*)) .
\end{align*}

The remaining convergence results follow by arguments analogous to those in Section \ref{sec:23}, together with the quantitative stability estimate from \cite[Corollary 2.3]{CCJ21}.  This completes the proof of Theorem \ref{thm:mima-mama}.

%%%%%%%%%%%%%%%%%%%%%%%%%%%%%%%%%%%%%%%%%%%%%%%%%%%%%%%%%%%%%%%%%%%%%%%%%
%
%

%%%%%%%%%%%%%%%%%%%%%%%%%%%%%%%%%%%%%%%%%%%%%%%%%%%%%%%%%%%%%%%%%%%%%%%%%
%
%
%
%
%
%%%%%%%%%%%%%%%%%%%%%%%%%%%%%%%%%%%%%%%%%%%%%%%%%%%%%%%%%%%%%%%%%%%%%%%%%
%
%
%
%
%
%%%%%%%%%%%%%%%%%%%%%%%%%%%%%%%%%%%%%%%%%%%%%%%%%%%%%%%%%%%%%%%%%%%%%%%%%

%%%%%%%%%%%%%%%%%%%%%%%%%%%%%%%%%%%%%%%%%%%%%%%%%%%%%%%%%%%%%%%%%%%%%%%%%
%
%
%
%
%
%%%%%%%%%%%%%%%%%%%%%%%%%%%%%%%%%%%%%%%%%%%%%%%%%%%%%%%%%%%%%%%%%%%%%%%%%

\section{Numerical experiments}\label{sec_numer}
In this section, we present a series of numerical experiments aimed at validating and comparing the micro-micro model~\eqref{eq:mimi}, the micro-macro model~\eqref{eq: micmac lim}, and the macro-macro model~\eqref{eq: macmac lim}, all considered in a one-dimensional spatial setting ($d = 1$).

To demonstrate the range of collective behaviors captured by the proposed interaction framework, we present three representative numerical tests.  
Each test is designed to probe a distinct aspect of the model hierarchy: the first validates theoretical convergence across scales; the second explores how varying leader–follower coupling shapes the group structure; the third investigates the emergence of finite-time blow-up and shock formation under structured leader–target influence.  
Together, these scenarios illustrate how spatial arrangements, interaction topologies, and control parameters affect the formation, splitting, and aggregation of cohesive clusters in leader–follower dynamics.

\subsection{Numerical schemes}
In the following, we describe in detail the numerical strategies adopted to approximate the solutions of the three models under consideration.

The microscopic system~\eqref{eq:mimi} consists of $2(N+M)$ ordinary differential equations, which are numerically integrated using the classical fourth-order explicit Runge–Kutta method. For the macroscopic system~\eqref{eq: macmac lim}, which takes the form of a system of balance laws, we employ a numerical method based on a splitting (or fractional step) approach. Within this framework, the hyperbolic part is discretized using a first-order explicit finite volume scheme in both space and time, with numerical fluxes computed via the Rusanov (local Lax--Friedrichs) method. The source terms are treated separately and integrated using a forward Euler step. In the micro-macro model~\eqref{eq: micmac lim}, the two approaches are coupled: at each time step, the microscopic and macroscopic components are updated simultaneously using the methods described above. To ensure stability of the explicit finite volume schemes, we employ an adaptive time-stepping strategy based on the Courant–Friedrichs–Lewy (CFL) condition. Specifically, the time step $\Delta t^n$ at time level $n$ is chosen according to $\max_x |\lambda|\cdot \Delta t^n \leq \mathrm{CFL} \cdot \Delta x$, where $\lambda$ denotes the eigenvalues of the Jacobian of the flux function, and $\mathrm{CFL} \in (0,1]$ is a user-defined parameter.

In the case of the macro-macro system~\eqref{eq: macmac lim}, the eigenvalues of the Jacobian matrix of the flux function correspond to the characteristic velocities of the system. More precisely, the eigenvalues are given by $\bar{u}_L$ and $\bar{u}_F$, representing the local average velocities of the two interacting populations. These quantities determine the maximum signal propagation speed and thus directly influence the choice of the time step through the CFL condition. For the micro-macro system~\eqref{eq: micmac lim}, the eigenvalues coincide with $\bar{u}_F$. Homogeneous Neumann boundary conditions are imposed on all variables, setting the spatial derivatives at the boundaries to zero.

To simulate the same dynamics across the different scales, we consider initial data that are consistent between the microscopic and macroscopic models. Specifically, once the initial density profiles for the populations of leaders and followers are defined—typically chosen as Gaussian functions and used as initial data for the macroscopic equations—we generate the corresponding particle positions for the microscopic equations using the procedure described below. 

Particle positions are deterministically sampled from the target Gaussian distribution using the inverse transform method, which relies on a uniform discretization of the cumulative distribution function. This approach ensures a smooth and symmetric sampling of the distribution, avoiding statistical noise and particle clustering that may arise from purely random sampling.

In the following tests, we adopt initial density profiles of Gaussian type, characterized by a prescribed mean $\mu$ and a fixed variance $\sigma^2 = 0.25$. The corresponding Gaussian function is defined as
\begin{equation*}%\label{def:gaussiana}
    \mathsf{G}(x;\mu,\sigma) = \frac{1}{\sigma\sqrt{2\pi}} \exp\left\{ -\frac{1}{2} \left( \frac{x - \mu}{\sigma} \right)^2 \right\},
\end{equation*} which we denote by $\mathsf{G}_\mu(x)$ for brevity throughout the paper.

\subsection{Test 1: Successive limit validation: Particle $\to$ Hybrid $\to$ Continuum}
This test numerically validates Theorems~\ref{thm:mimi-mima} and~\ref{thm:mima-mama}, which establish the convergence of solutions between the microscopic particle system~\eqref{eq:mimi} and the micro-macro system~\eqref{eq: micmac lim}, and between the micro-macro system and the fully continuum system~\eqref{eq: macmac lim}, respectively. In particular, we aim to study the quantitative estimates provided by these theorems and to numerically assess the convergence rates as the number of particles in the leader and follower populations varies.

The simulation is conducted in the spatial domain $\Omega=[-8,8]$ up to the final time $T = 15$. At $t = 0$, the leaders are initially arranged as a single cluster with density $\bar \rho_L(0,x,\xi) = \mathsf{G}_{-3}(x)\delta_0(\xi)$ and zero velocity, $\bar u_L(0,x,\xi) = 0$. They experience repulsive self-interactions governed by $W_L'(r) = -r$ and are attracted to a Dirac delta target at the origin with attraction strength $\alpha = 0.5$. The followers also start as a single cluster with $\bar \rho_F(0,x) = \mathsf{G}_5(x)$ and $\bar u_F(0,x) = 0$, interacting repulsively via $W_F'(r) = -r$ and attracted to the leaders through $W_C'(r) = 2r$. A velocity alignment term is included, defined by $\phi(r) = (1 + |r|^2)^{-\beta/2}$ with $\beta = 0.5$.

In the micro-micro model, we fix the number of leaders to $N=100$ and vary the number of followers as $M = 10^k$, with $k = 1, \dots, 4$. For the micro-macro model, we vary the number of leaders as $N = 10^k$, with $k = 1, \dots, 4$. The macroscopic follower equation in the micro-macro and the macro-macro systems is discretized using a spatial grid with step size $\Delta x = 0.005$.

Figure~\ref{fig:figAB} presents the numerical results for all three models. For the micro-micro model, we display the particle trajectories for $N=M=100$; for the micro-macro model, we show the leader trajectories for $N=100$ together with the follower density; and for the macro-macro model, we report the densities of both populations. As observed, the dynamics of both populations are consistent across the three levels of description.

To quantify this agreement, we compute the quantitative estimates from the convergence theorems. Specifically, we evaluate the estimate~\eqref{eq: thm micmic micmac quant} between the particle system and the micro-macro system by defining the functional $\mathcal{F}^M(t) = F_1 + F_2 + F_3$, with 
\[
\begin{alignedat}{2}
    F_1 &:= d_1^2\!\big(\pi^x_\#\mu_F^{N,M}(t),\, \rho_F^N(t)\big), &\quad
    F_2 &:= \iint_{\Omega \times \R^d} |w - u_F^N(t,y)|^2 \mu_F^{N,M}(t,dy\,dw), \\
    F_3 &:= \iiiint_{(\Omega \times \R^d)^2} (|x-\bar x|^2 + |v-\bar v|^2) \pi_L^{N,M}(t,dx\,dv\,d\bar x\,d\bar v).
\end{alignedat}
\]

For the estimate~\eqref{est_miMA} between the micro-macro and fully continuum models, we define the functional $\mathcal{G}^N(t) = G_1 + G_2 + G_3 + G_4$, with
\[
\begin{alignedat}{2}
    G_1 &:= \iiint_{\Omega \times \R^d \times \Omega} |v - \bar u_L(t,x,\xi)|^2 \bar\mu_{L}^N(t,dx\,dv) g(d\xi), &\quad
    G_2 &:= \int_{\Omega} d_{1}^2\!\big( \bar\varrho^N_L(t),\, \bar\rho_L(t,\cdot,\xi)\big) g(d\xi), \\
    G_3 &:= \int_{\Omega} \rho^N_F(t) |u^N_F(t) - \bar u_F(t)|^2, &\quad
    G_4 &:= d_1^2(\bar\rho^N_F(t),\, \bar\rho_F(t)).
\end{alignedat}
\]
Since the spatial domain is one-dimensional and bounded, we replace the bounded-Lipschitz distance in term $B$ with the 1-Wasserstein distance $d_1$, which is equivalent up to a constant and numerically more convenient due to its explicit formulation via cumulative distribution functions.

Figures~\ref{fig:fiCD3} and~\ref{fig:fiCD1} illustrate that the functionals $\mathcal{F}^M(t)$ and $\mathcal{G}^N(t)$ decrease as $M$ and $N$ increase, respectively, indicating convergence toward the corresponding limit models. In Figures~\ref{fig:figCD4} and~\ref{fig:figCD2}, we report the estimated convergence rates for each term with respect to the reference solutions computed at $M = 10^4$ and $N = 10^4$, respectively. The rates, obtained via log-log regression of $L^2$ norm errors, suggest an overall convergence of order $\mathcal{O}(1/M)$ and $\mathcal{O}(1/N)$, respectively.

In summary, the results of this convergence test provide clear numerical evidence that the multiscale hierarchy correctly recovers the expected limiting behavior as the number of particles increases. The estimated convergence rates align well with the theoretical predictions, confirming the validity of the quantitative estimates derived in the convergence theorems.  
This supports the consistency of the micro-micro, micro-macro, and macro-macro models and illustrates the effectiveness of the proposed numerical framework for studying the propagation of mean-field limits in leader–follower systems.

\begin{figure}[ht]
\centering
\begin{subfigure}[b]{0.32\textwidth} \includegraphics[width=\linewidth]{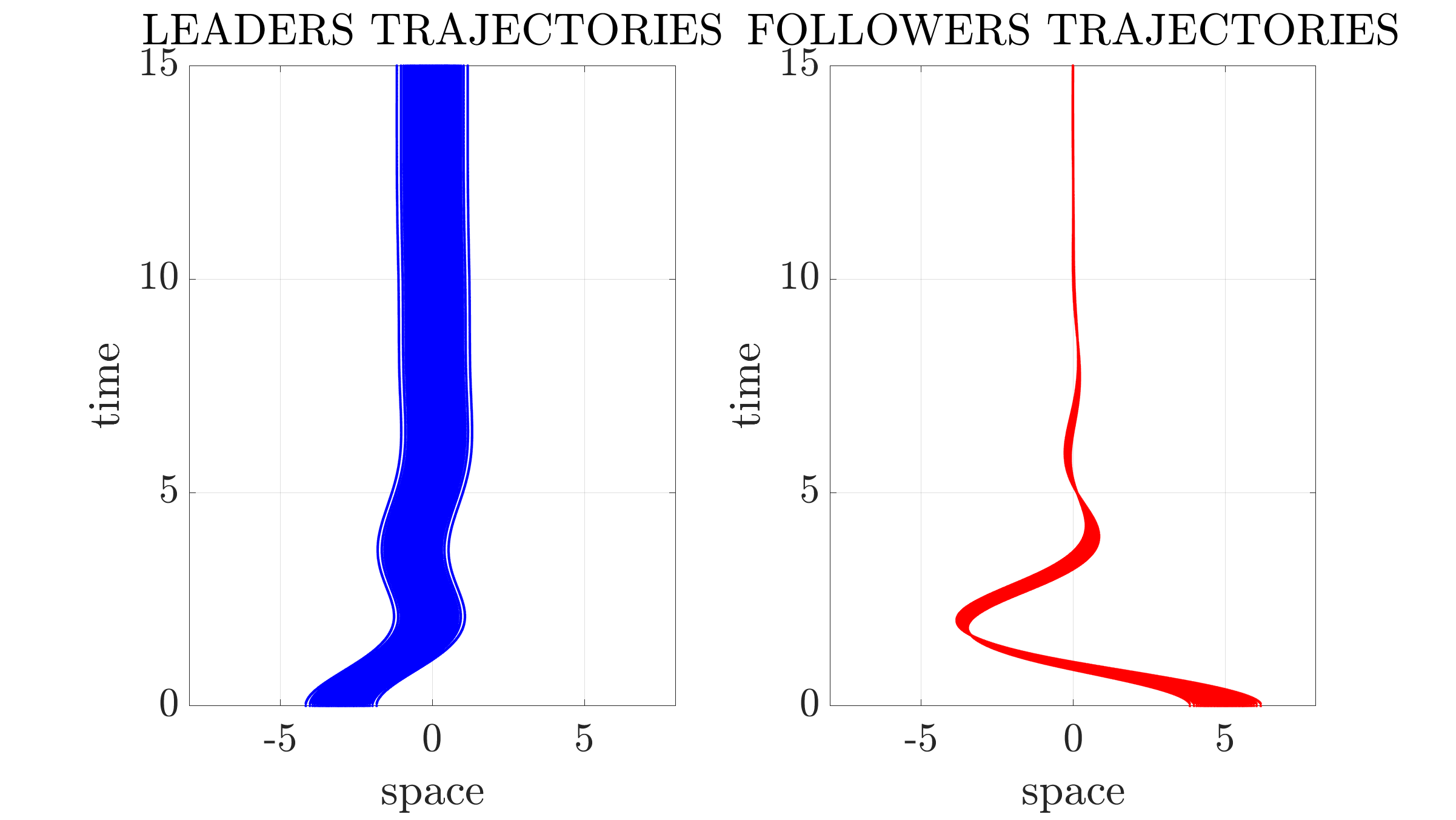}
  \caption{micro-micro with $N=M=100$}
  %\label{fig:M100}
\end{subfigure}
\hfill
\begin{subfigure}[b]{0.32\textwidth}
\includegraphics[width=\linewidth]{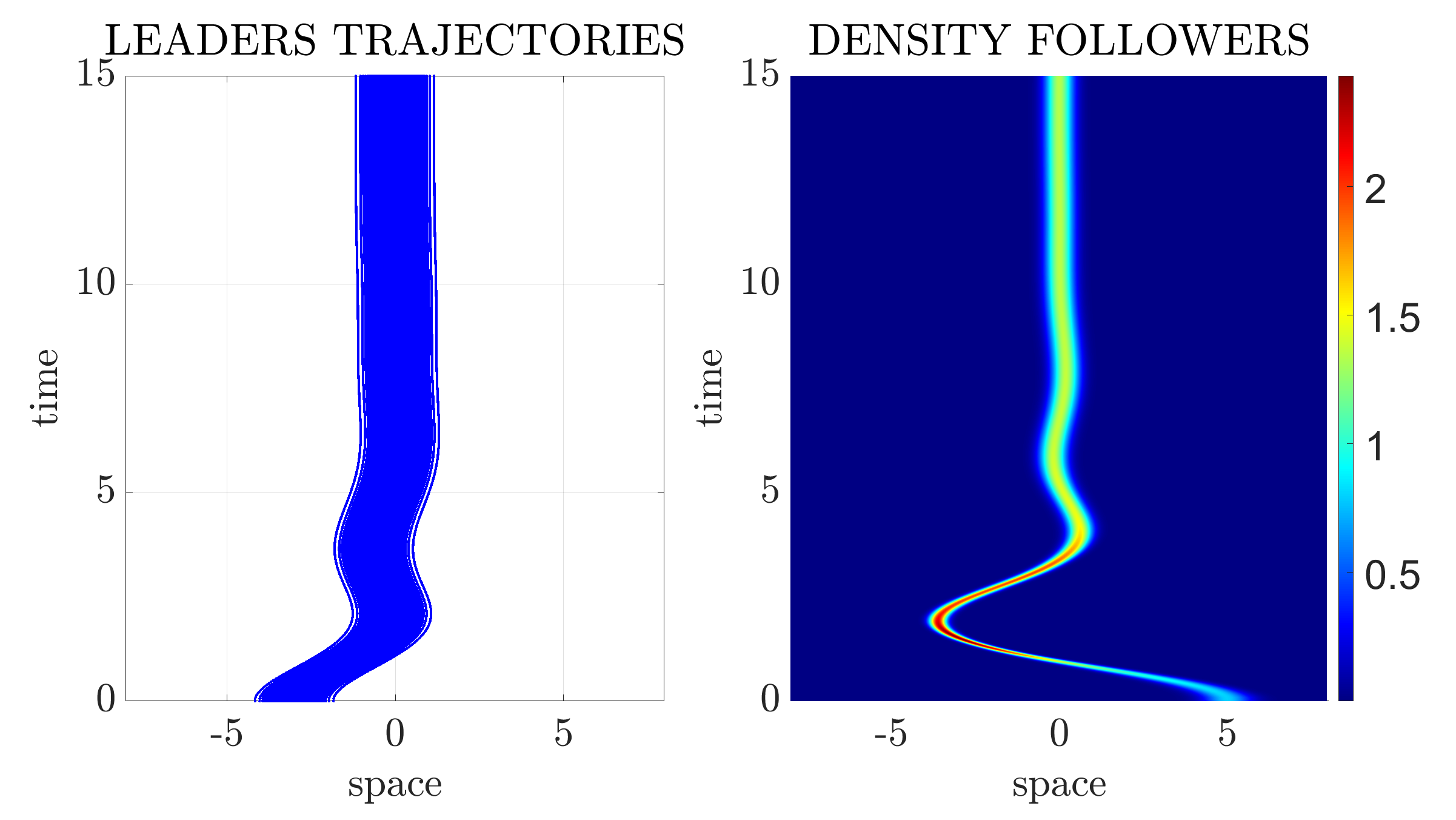}
  \caption{micro-macro with $N=100$}
  %\label{fig:N100}
\end{subfigure}
\hfill
\begin{subfigure}[b]{0.32\textwidth}\includegraphics[width=\linewidth]{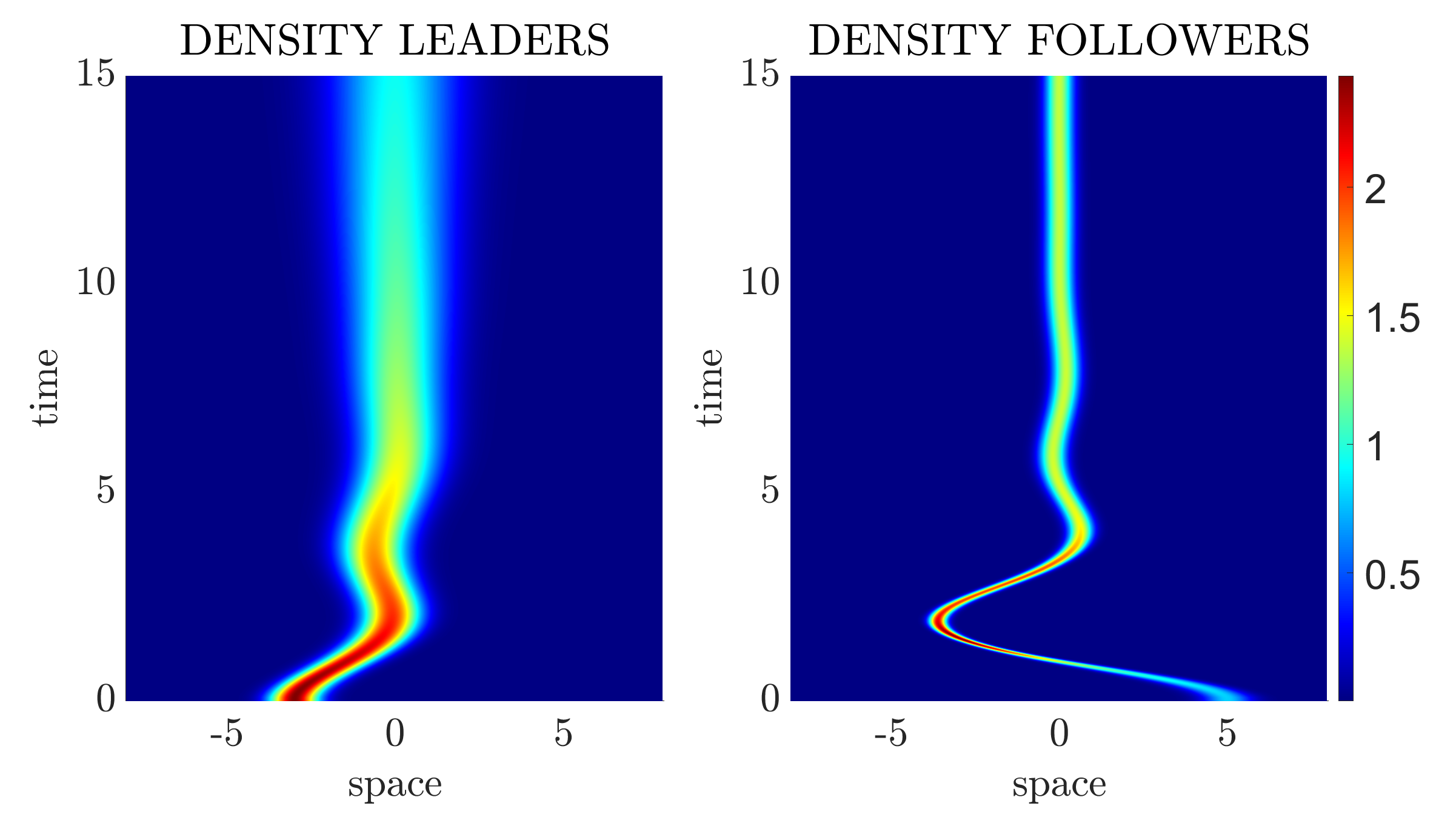}
  \caption{macro-macro}
%  \label{fig:MACRO}
\end{subfigure}
\caption{Test 1. Comparison of the solutions between the three models.}
\label{fig:figAB}
\end{figure}

\begin{figure}[ht]
\centering
\begin{subfigure}[b]{0.48\textwidth}
  \includegraphics[width=\linewidth]{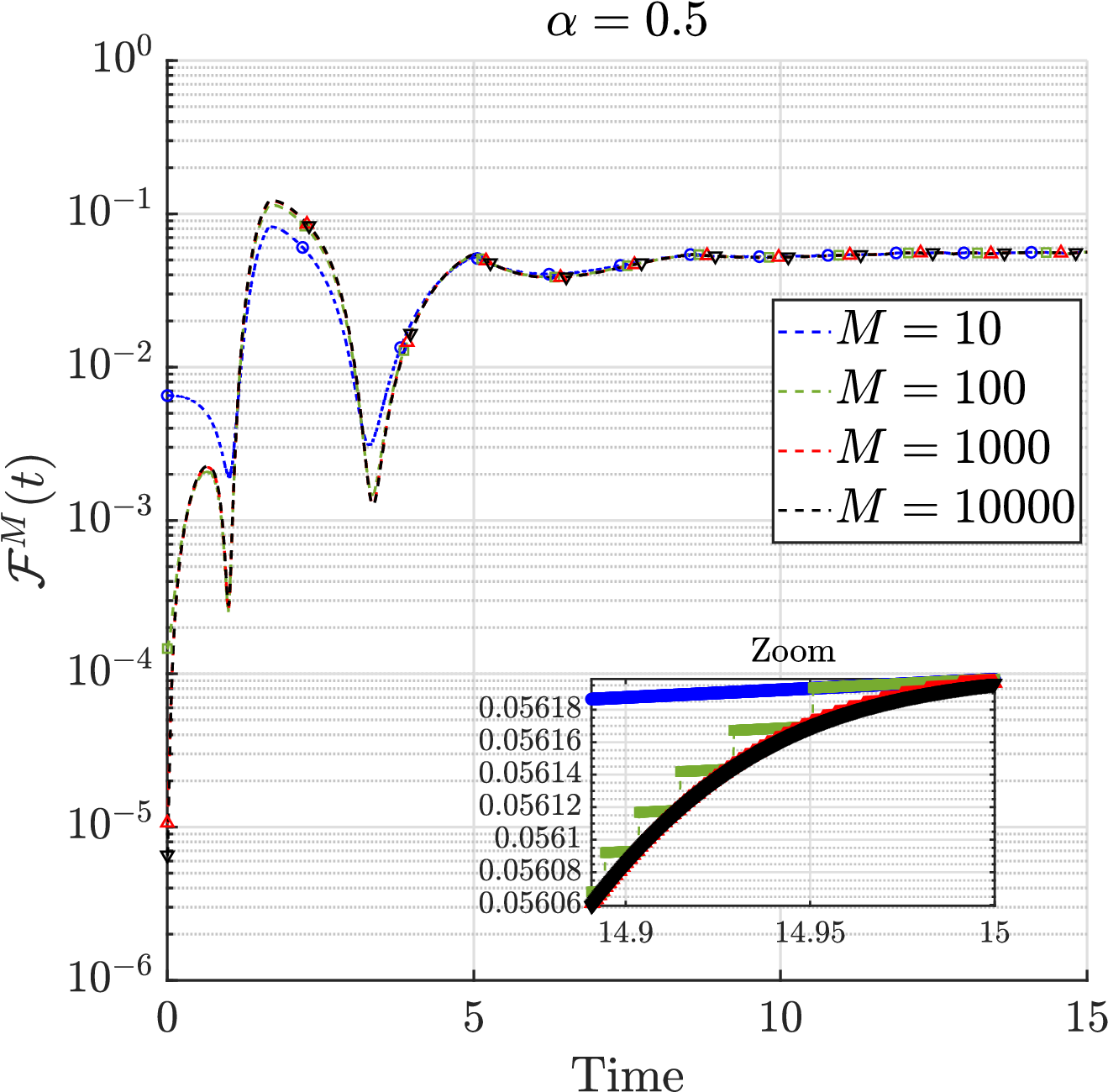}
  \caption{Time evolution of the functional $\mathcal{F}^M(t)$ for different values of $M$.}
  \label{fig:fiCD3}
\end{subfigure}
\hfill
\begin{subfigure}[b]{0.48\textwidth}
  \includegraphics[width=\linewidth]{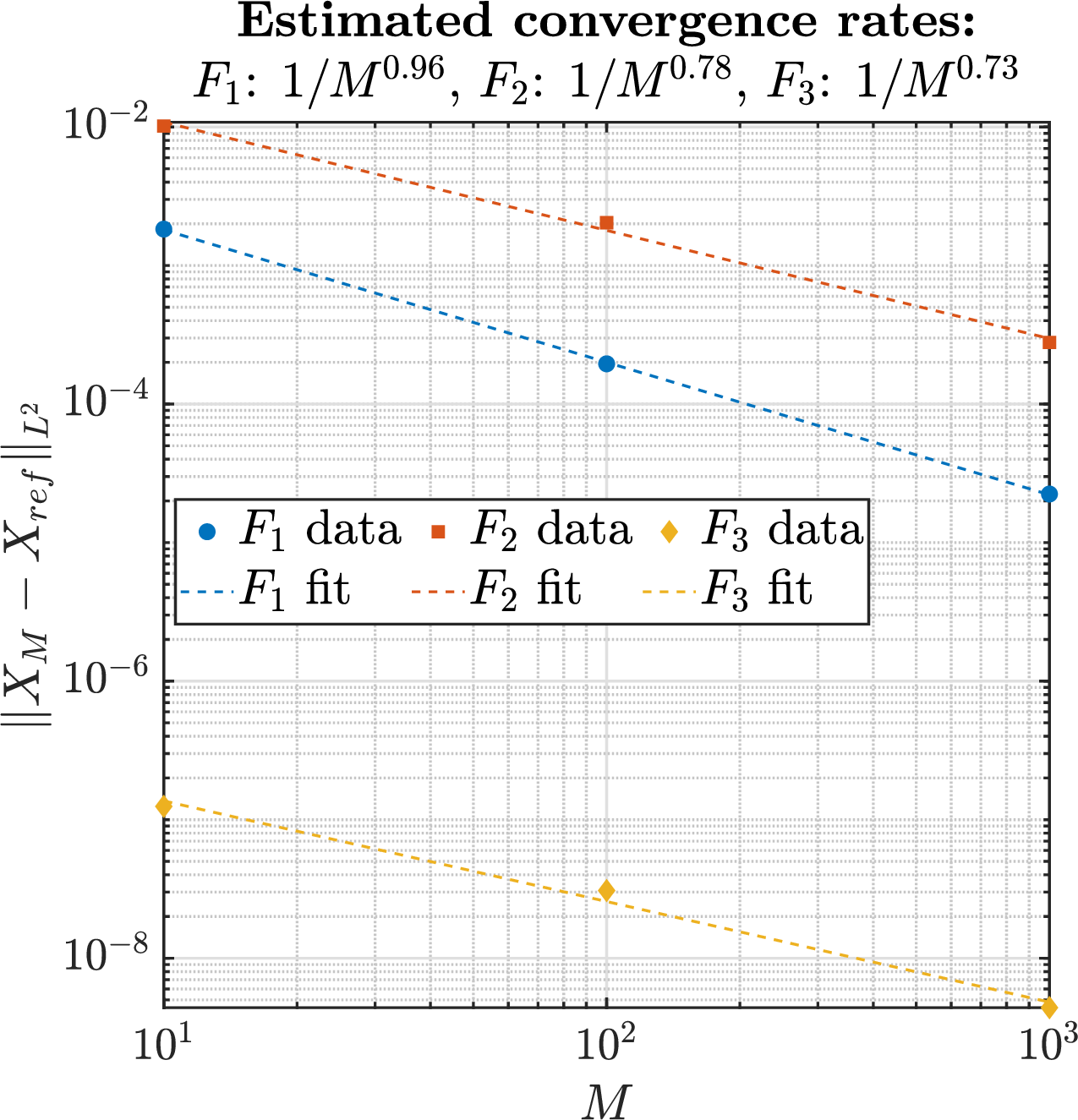}
  \caption{Estimated convergence rates for each component of $\mathcal{F}^M(t)$ as $M$ varies.}
  \label{fig:figCD4}
\end{subfigure}
\caption{Test 1. Particle $\to$ Hybrid convergence as $M$ increases.}
%\label{fig:figFM}
\end{figure}

\begin{figure}[ht]
\centering
\begin{subfigure}[b]{0.48\textwidth}
  \includegraphics[width=\linewidth]{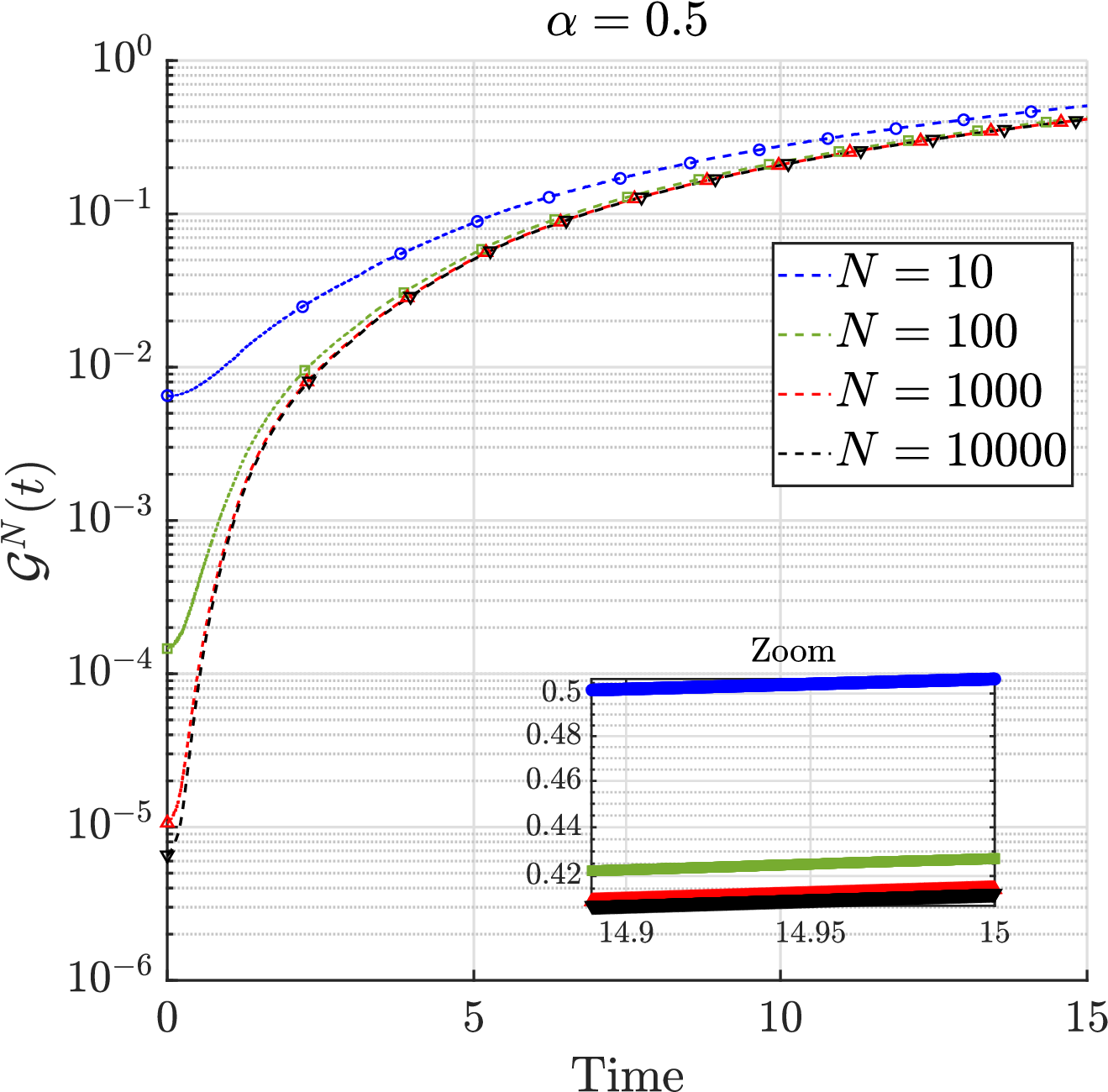}
  \caption{Time evolution of the functional $\mathcal{G}^N(t)$ for different values of $N$.}
  \label{fig:fiCD1}
\end{subfigure}
\hfill
\begin{subfigure}[b]{0.48\textwidth}
  \includegraphics[width=\linewidth]{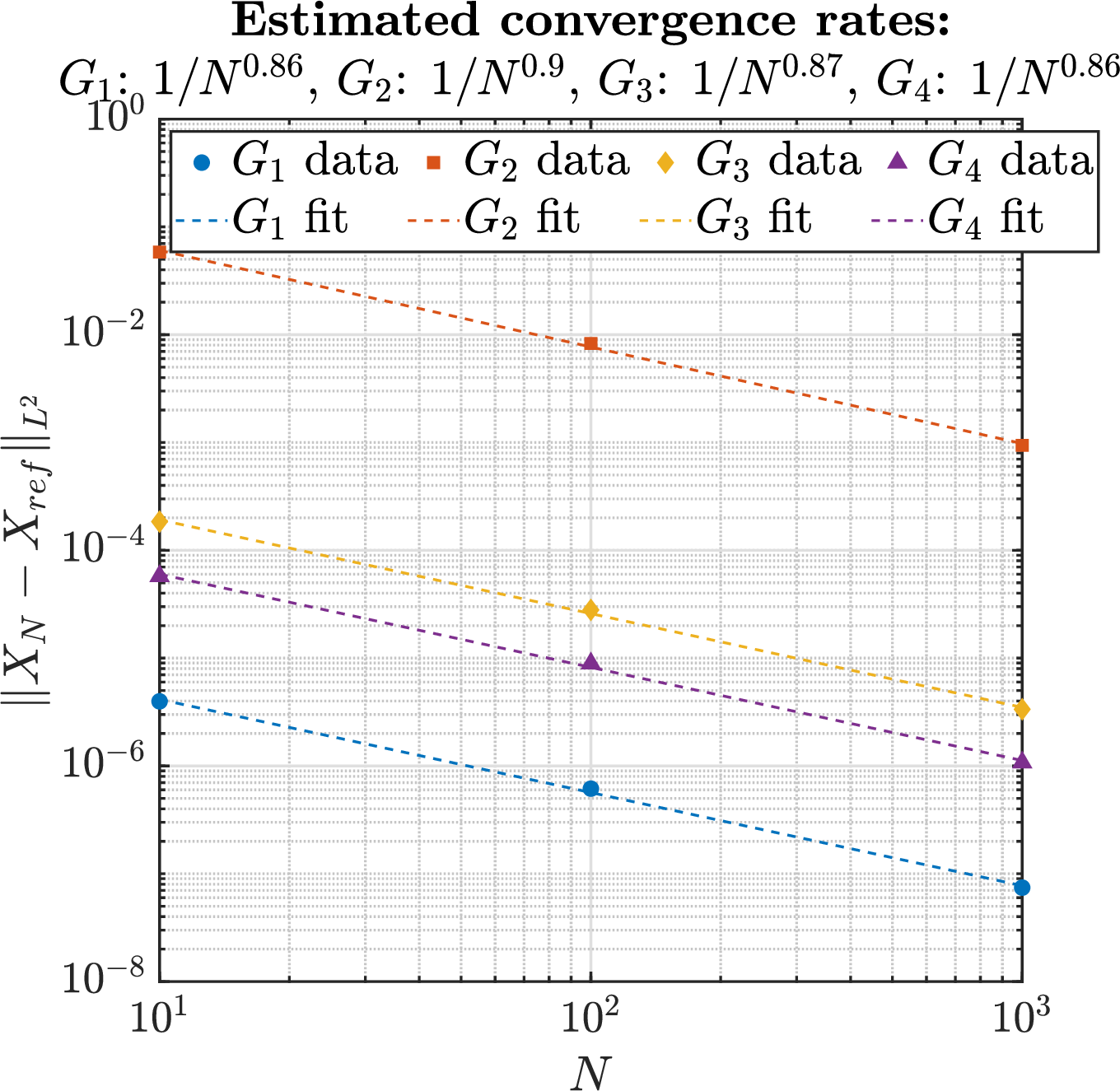}
  \caption{Estimated convergence rates for each component of $\mathcal{G}^N(t)$ as $N$ varies.}
  \label{fig:figCD2}
\end{subfigure}
\caption{Hybrid $\to$ Continuum convergence as $N$ increases.}
%\label{fig:figCD}
\end{figure}

\subsection{Test 2: Model comparison across control regimes} In this test, we investigate how the micro-micro, micro-macro, and macro-macro models reproduce the same qualitative dynamics under varying values of the control parameter $\alpha$. This comparison highlights the consistency of the model hierarchy and the role of the coupling strength in shaping the collective behavior.

The spatial domain is $[-15, 15]$, and the final simulation time is $T = 10$. Initially, the leader population is split into two clusters:
\[
\bar \rho_L(0,x,\xi) = 0.5 \left( \mathsf{G}_{-3}(x)\delta_{-5}(\xi) + \mathsf{G}_3(x)\delta_{5}(\xi) \right), \quad \bar u_L(0,x,\xi) = 0,
\]
with two Dirac delta targets positioned  at $\xi = \pm 5$. The repulsive interaction among leaders is given by
\[
W'_L(r) = - \frac{2\operatorname{sign}(r)}{|r| + \varepsilon^2}  \mathbf{1}_{\{ \varepsilon < |r| \leq 2.5 \}},
\]
where $\varepsilon \ll 1$ is a small regularization parameter to avoid singularities. The follower population starts as a single Gaussian:
\[
\bar \rho_F(0,x) = \mathsf{G}_0(x), \quad \bar u_F(0,x) = -\chi_{[-2,0]}(x) + \chi_{[0,2]}(x).
\]
Followers interact repulsively via $W'_F(r) = -\operatorname{sign}(r)$ and are attracted to leaders through:
\(
W'_C(r) = r  \mathbf{1}_{\{ |r| \leq 4 \}}.
\)
The velocity alignment mechanism is modeled by $\phi(r) = (1 + |r|^2)^{-\beta/2}$ with $\beta = 0.005$. For the microscopic simulations, we use $N = M = 150$ particles, while macroscopic simulations employ a spatial grid with step size $\Delta x = 0.005$. We examine three different values of the attraction strength $\alpha$: 0, 0.5, and 1, to illustrate how the collective behavior changes under weaker or stronger leader-follower coupling.

In Fig.~\ref{fig:test2_1}, we illustrate the trajectories in the microscopic case and the corresponding density profiles in the macroscopic cases for all three models, as the control parameter $\alpha$ varies. Overall, the dynamics of both leader and follower populations remain qualitatively consistent across the different scales of description.

For $\alpha = 0$, the two leader clusters each converge towards their respective target positions, while the follower population splits into two subgroups, each attracted to the nearest leaders. When $\alpha = 0.5$, the leaders persist in two separate groups with distinct targets but move closer together due to the attraction towards the followers’ center of mass. Consequently, the followers divide into three subgroups: two following the leaders and a central one remaining stationary, equally influenced by both leader clusters. Finally, for $\alpha = 1$, the leaders are predominantly driven by the global center of mass of the followers, which pulls them towards the origin. In turn, the followers, attracted to the leaders, also concentrate at the origin.

In Fig.~\ref{fig:test2_2}, we provide a visual comparison of the momentum, defined as the product of density and velocity. In the macroscopic models, momentum is directly evolved, whereas in the microscopic simulations it is reconstructed by partitioning the spatial domain into uniform bins: at each time step, the density in a bin corresponds to the number of particles it contains, and the momentum is computed as the density multiplied by the average velocity in that bin. This comparison reveals a strong qualitative agreement among the three model formulations.

In conclusion, this test demonstrates that the three models generate qualitatively consistent collective dynamics under different levels of leader–follower coupling.  
The observed transitions—from well-separated groups to full aggregation—highlight the interplay between target attraction and follower alignment, and confirm that the multiscale framework reliably captures these structural changes at all levels of description.  
These findings emphasize the importance of the coupling parameter in controlling group cohesion and suggest its potential as a design variable for applications in collective behavior and crowd management.

\begin{figure}[ht]
\centering
% Riga 1
\begin{subfigure}[b]{0.3\textwidth}
\includegraphics[width=\linewidth]{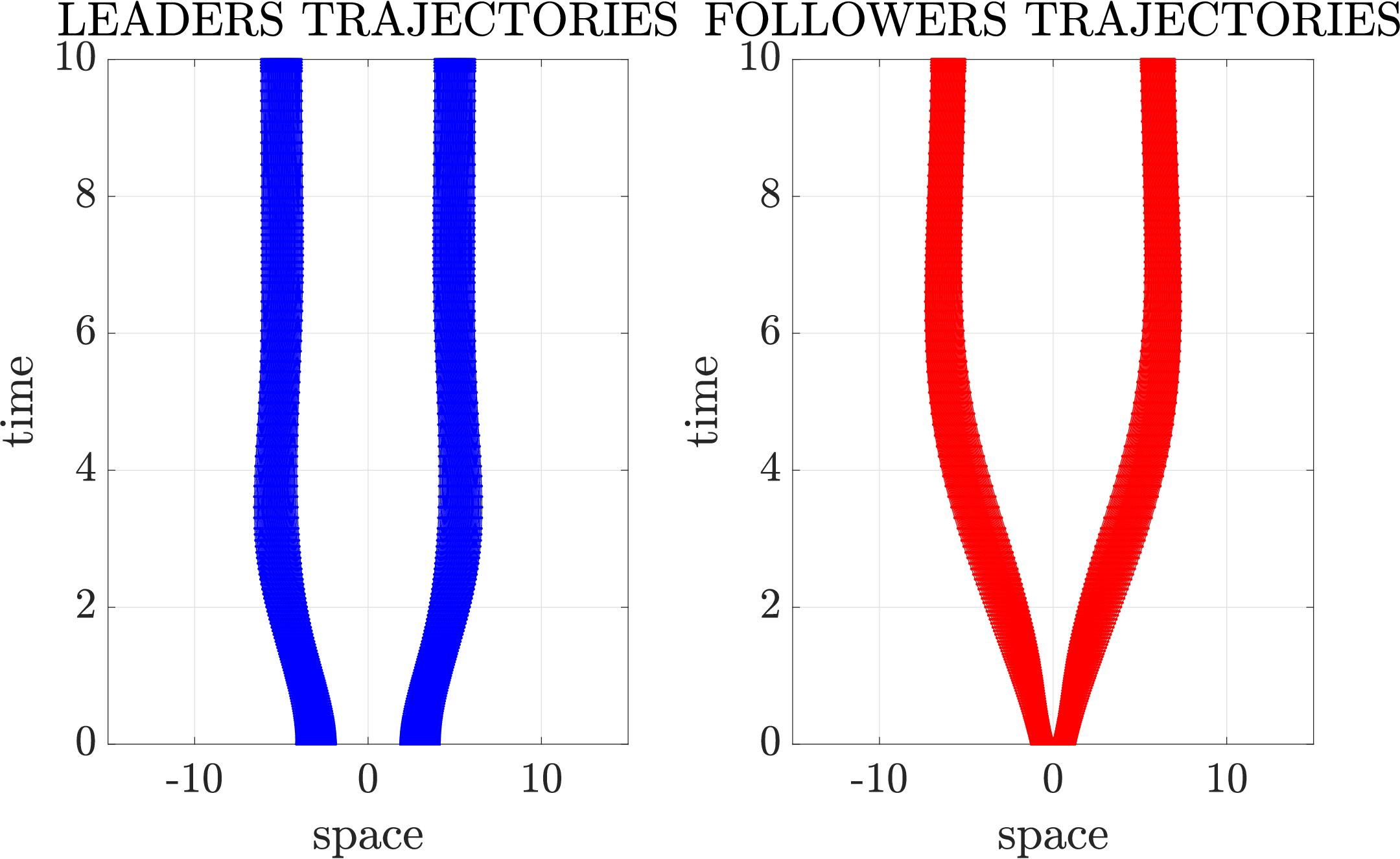}
\caption{micro-micro, $\alpha = 0$}
%\label{fig:a1}
\end{subfigure}
\hfill
\begin{subfigure}[b]{0.32\textwidth}
\includegraphics[width=\linewidth]{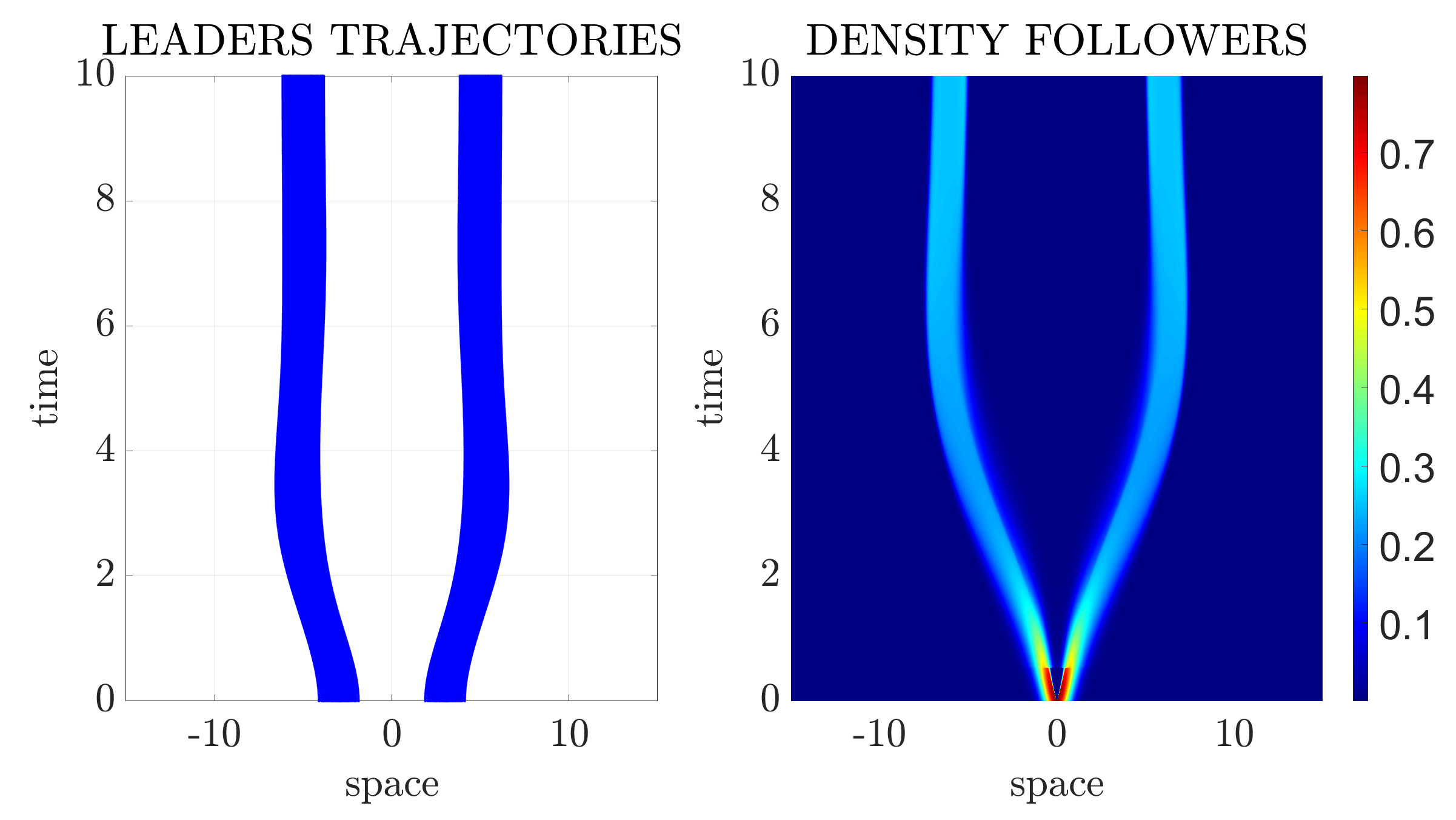}
\caption{micro-macro, $\alpha = 0$}
%\label{fig:b1}
\end{subfigure}
\hfill
\begin{subfigure}[b]{0.32\textwidth}
\includegraphics[width=\linewidth]{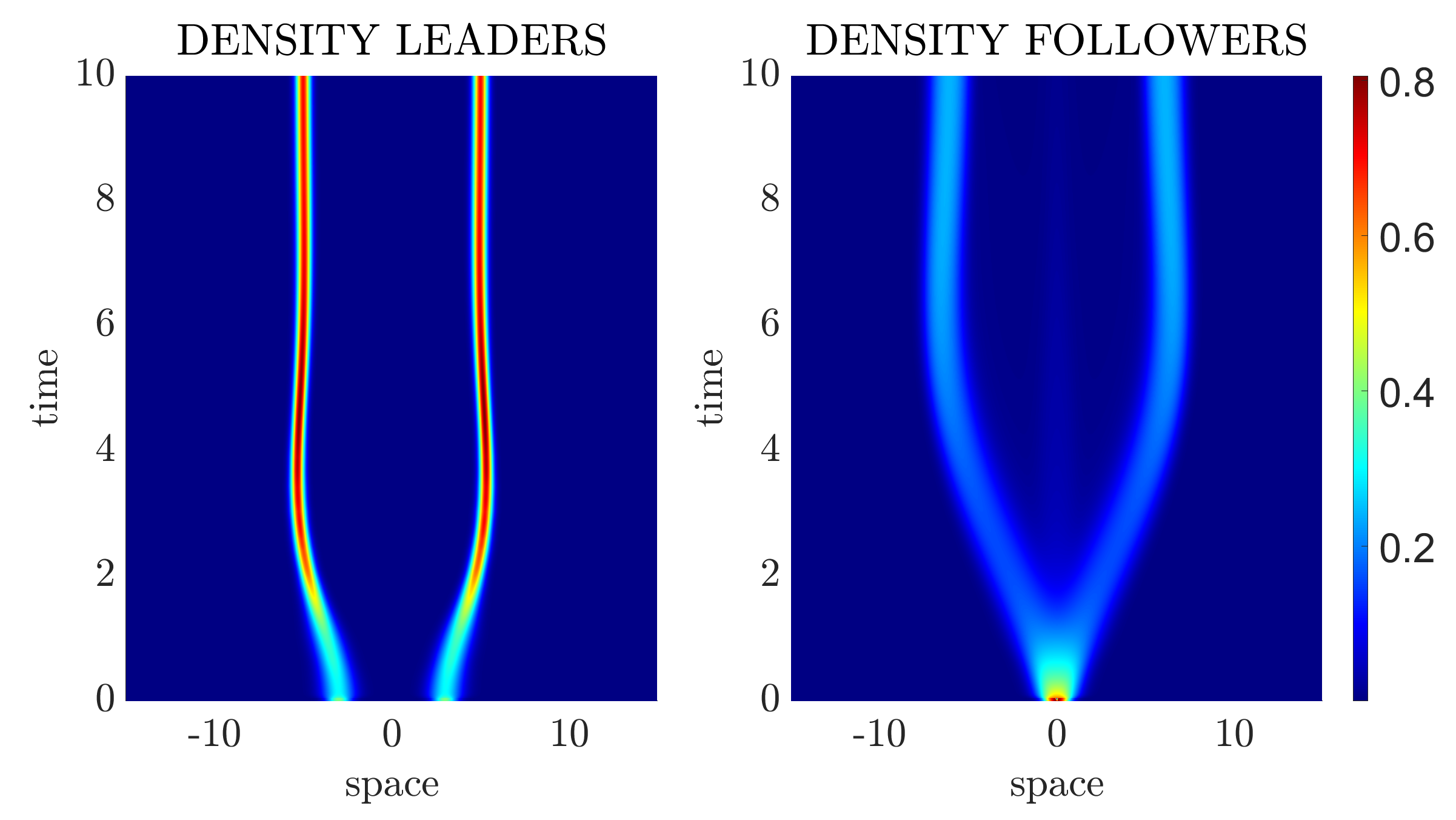}
\caption{macro-macro, $\alpha = 0$}
%\label{fig:c1}
\end{subfigure}

\vspace{0.5em} % spazio tra righe

% Riga 2
\begin{subfigure}[b]{0.32\textwidth}
\includegraphics[width=\linewidth]{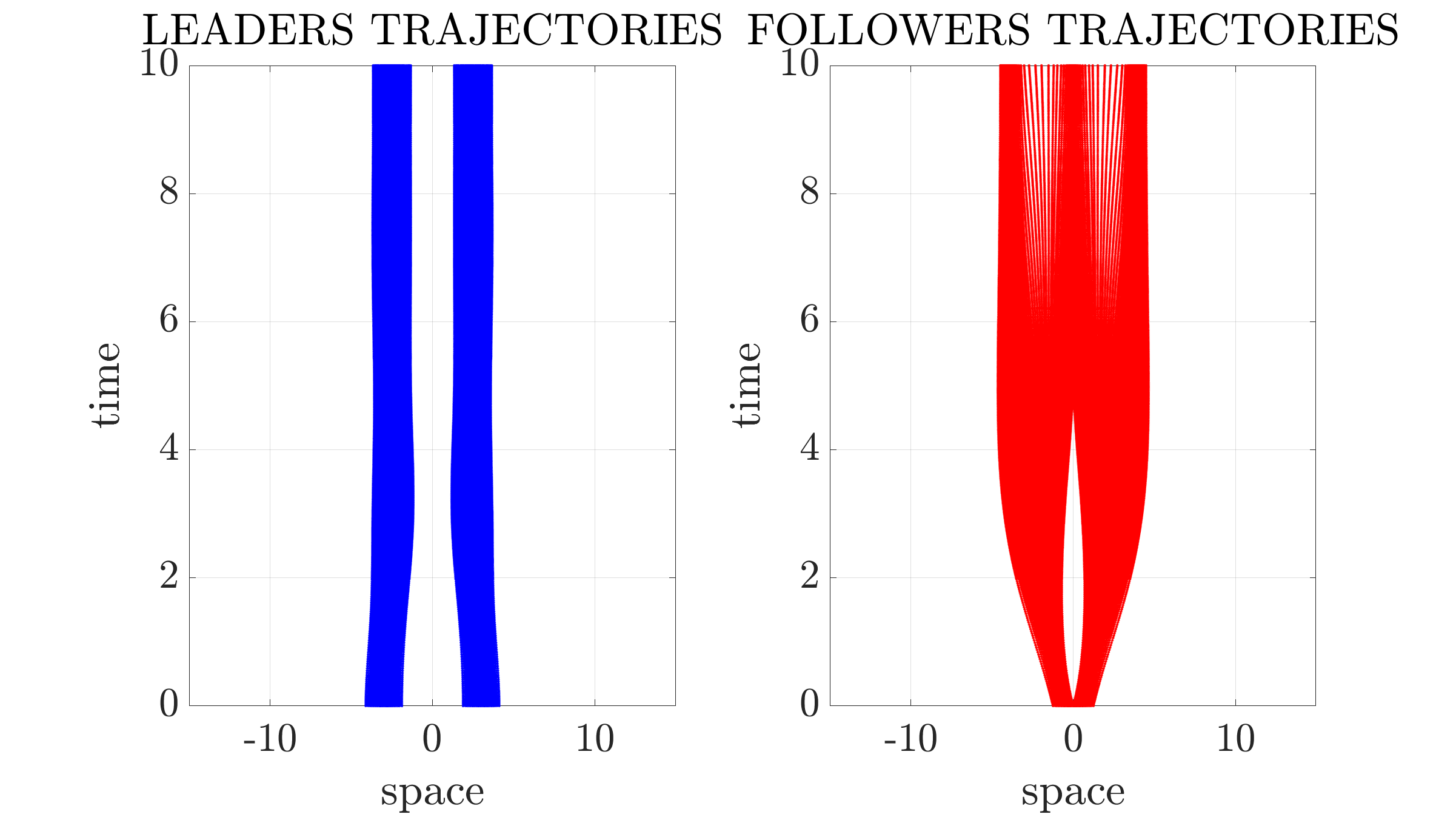}
\caption{micro-micro, $\alpha = 0.5$}
%\label{fig:d1}
\end{subfigure}
\hfill
\begin{subfigure}[b]{0.32\textwidth}
\includegraphics[width=\linewidth]{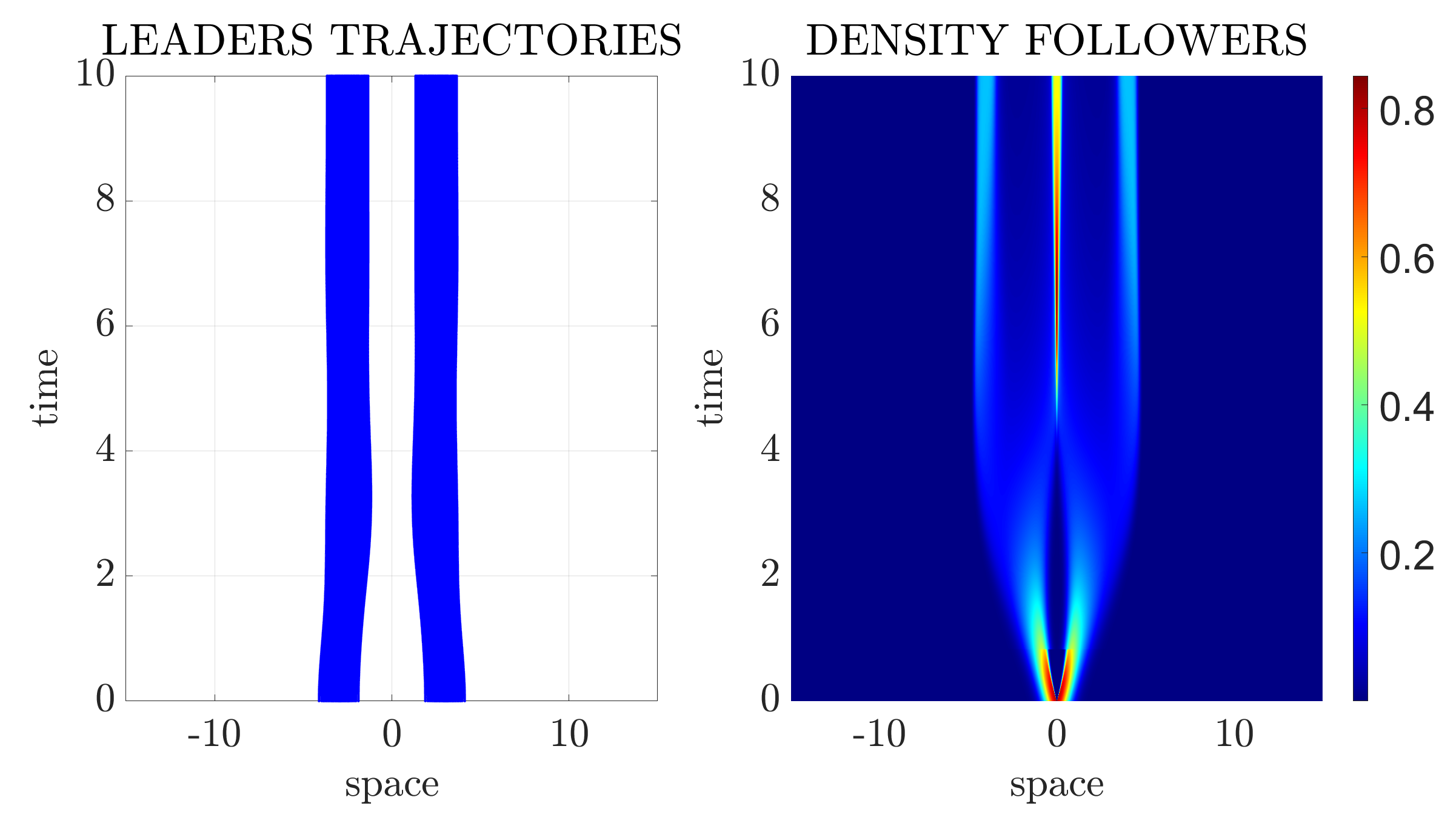}
\caption{micro-macro, $\alpha = 0.5$}
%\label{fig:e1}
\end{subfigure}
\hfill
\begin{subfigure}[b]{0.32\textwidth}
\includegraphics[width=\linewidth]{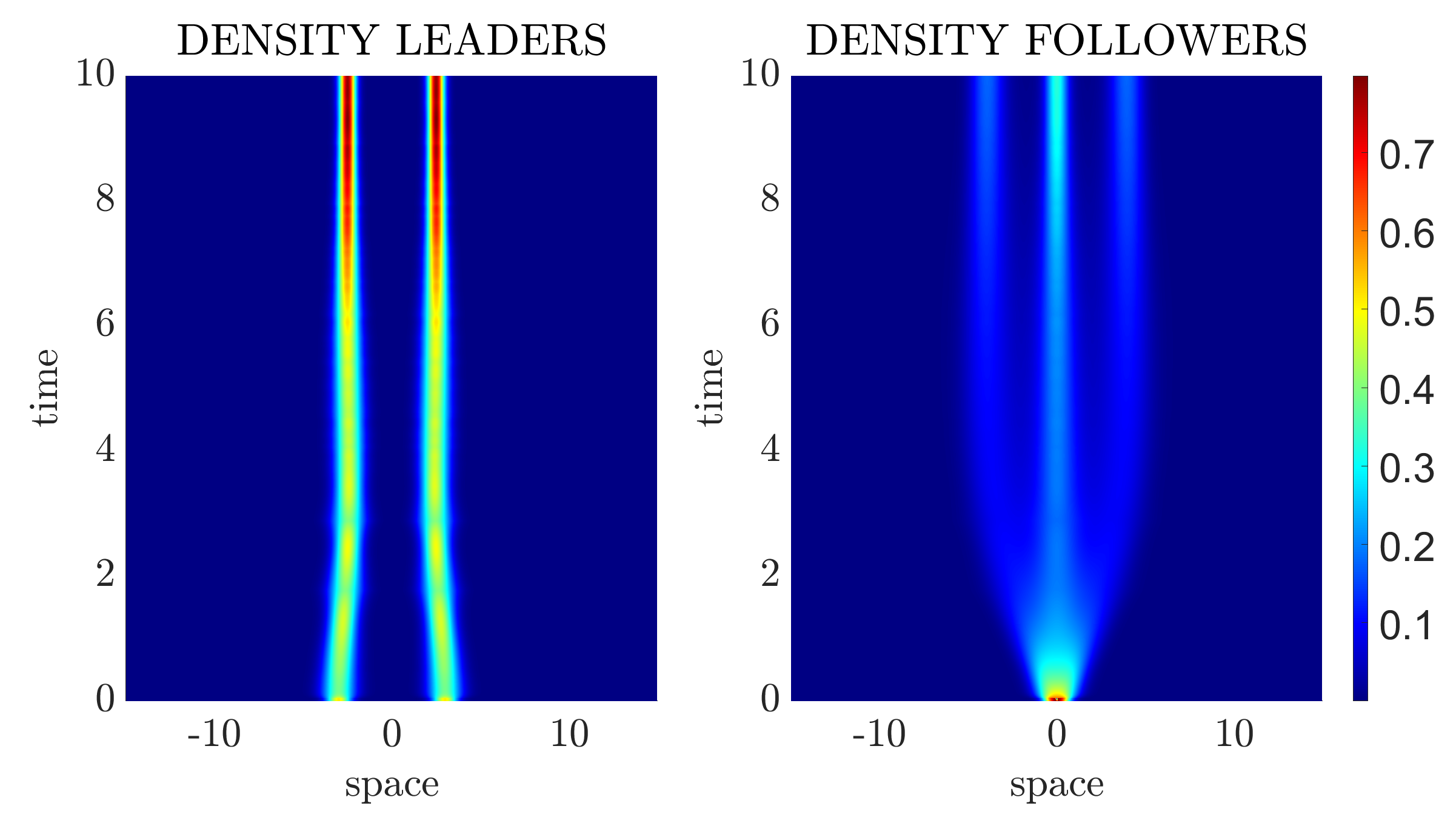}
\caption{macro-macro, $\alpha = 0.5$}
%\label{fig:f1}
\end{subfigure}

\vspace{0.5em} % spazio tra righe

% Riga 3
\begin{subfigure}[b]{0.32\textwidth}
\includegraphics[width=\linewidth]{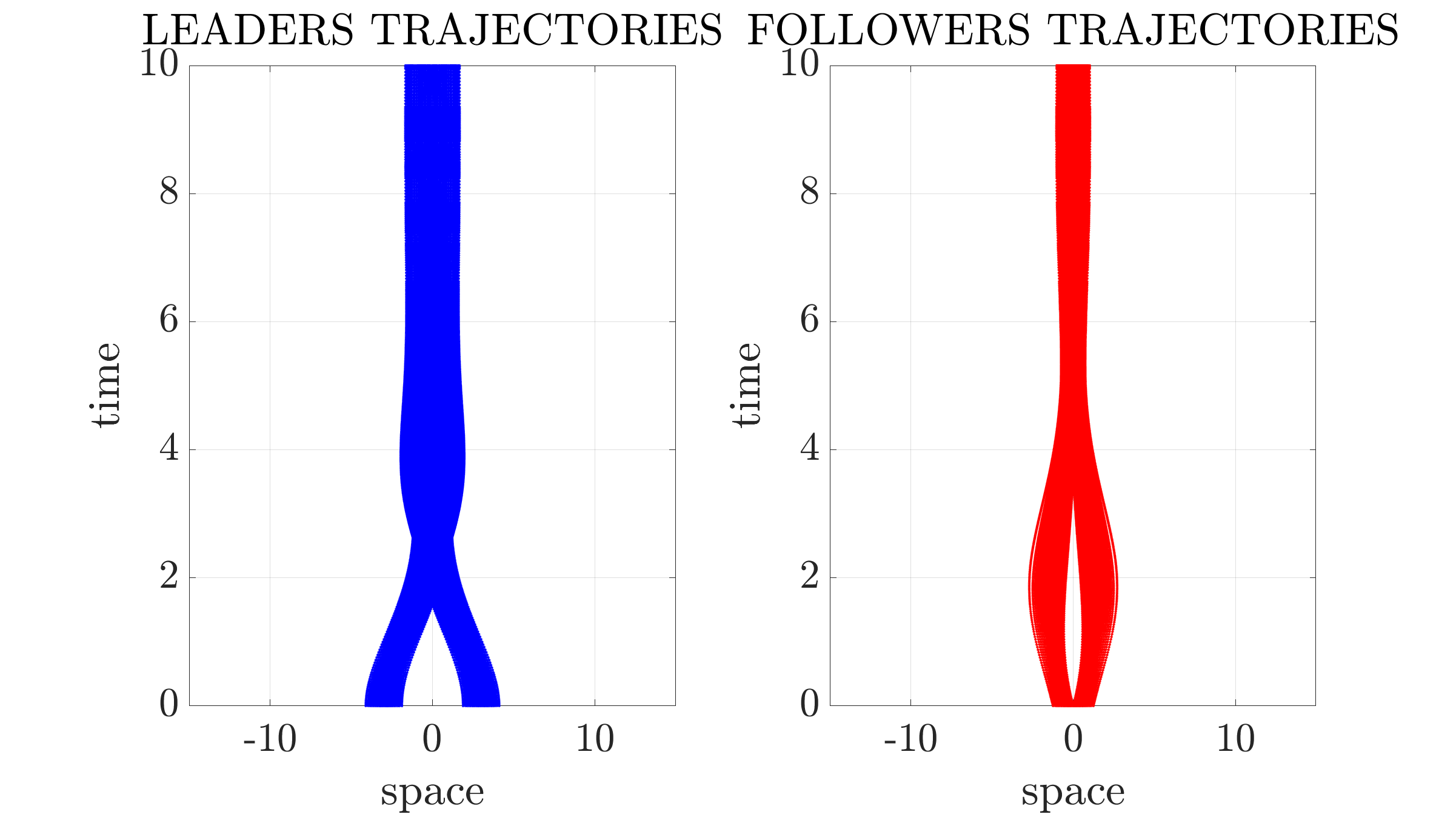}
\caption{micro-micro, $\alpha = 1$}
%\label{fig:g1}
\end{subfigure}
\hfill
\begin{subfigure}[b]{0.32\textwidth}
\includegraphics[width=\linewidth]{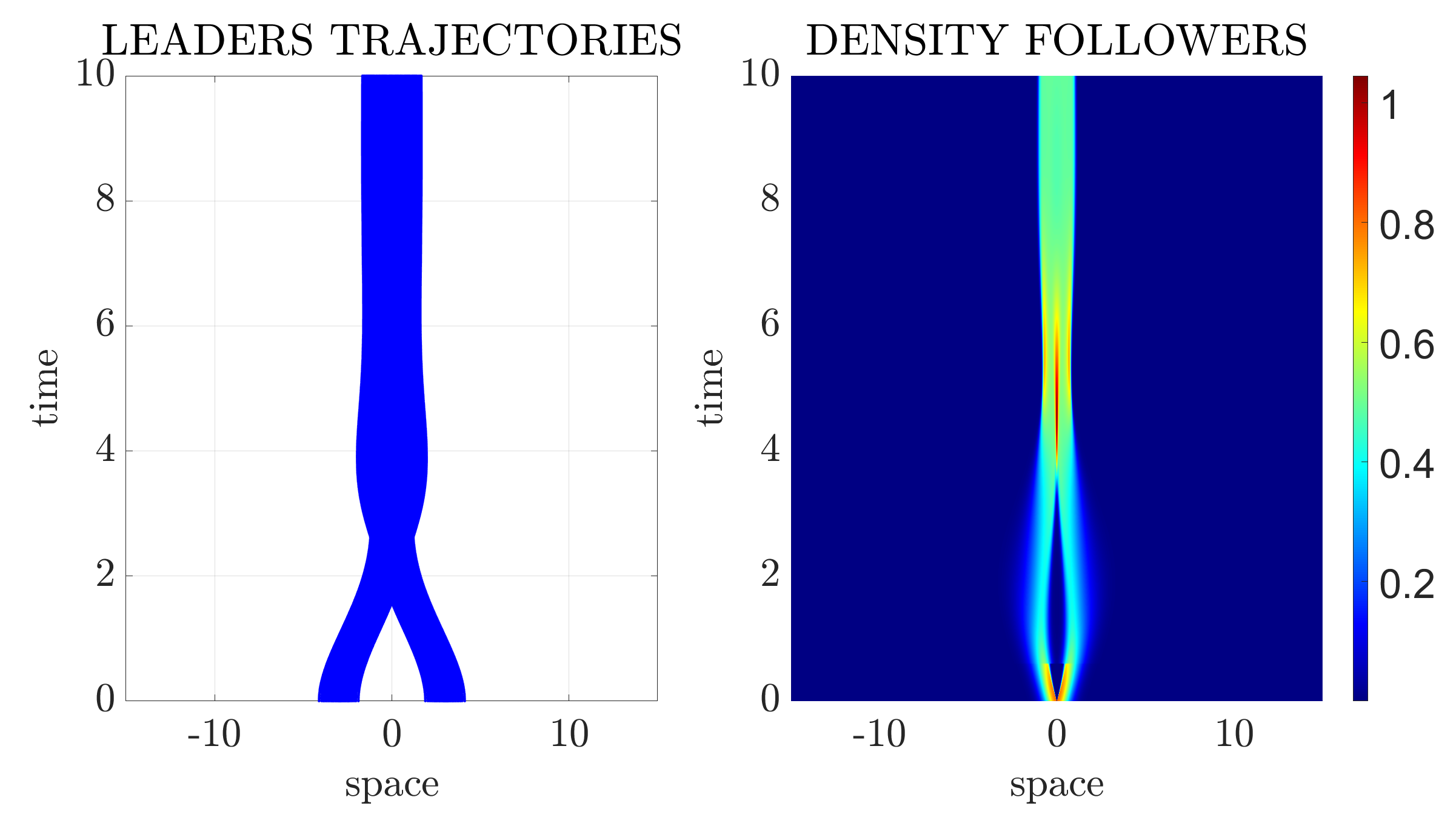}
\caption{micro-macro, $\alpha = 1$}
%\label{fig:h1}
\end{subfigure}
\hfill
\begin{subfigure}[b]{0.32\textwidth}
\includegraphics[width=\linewidth]{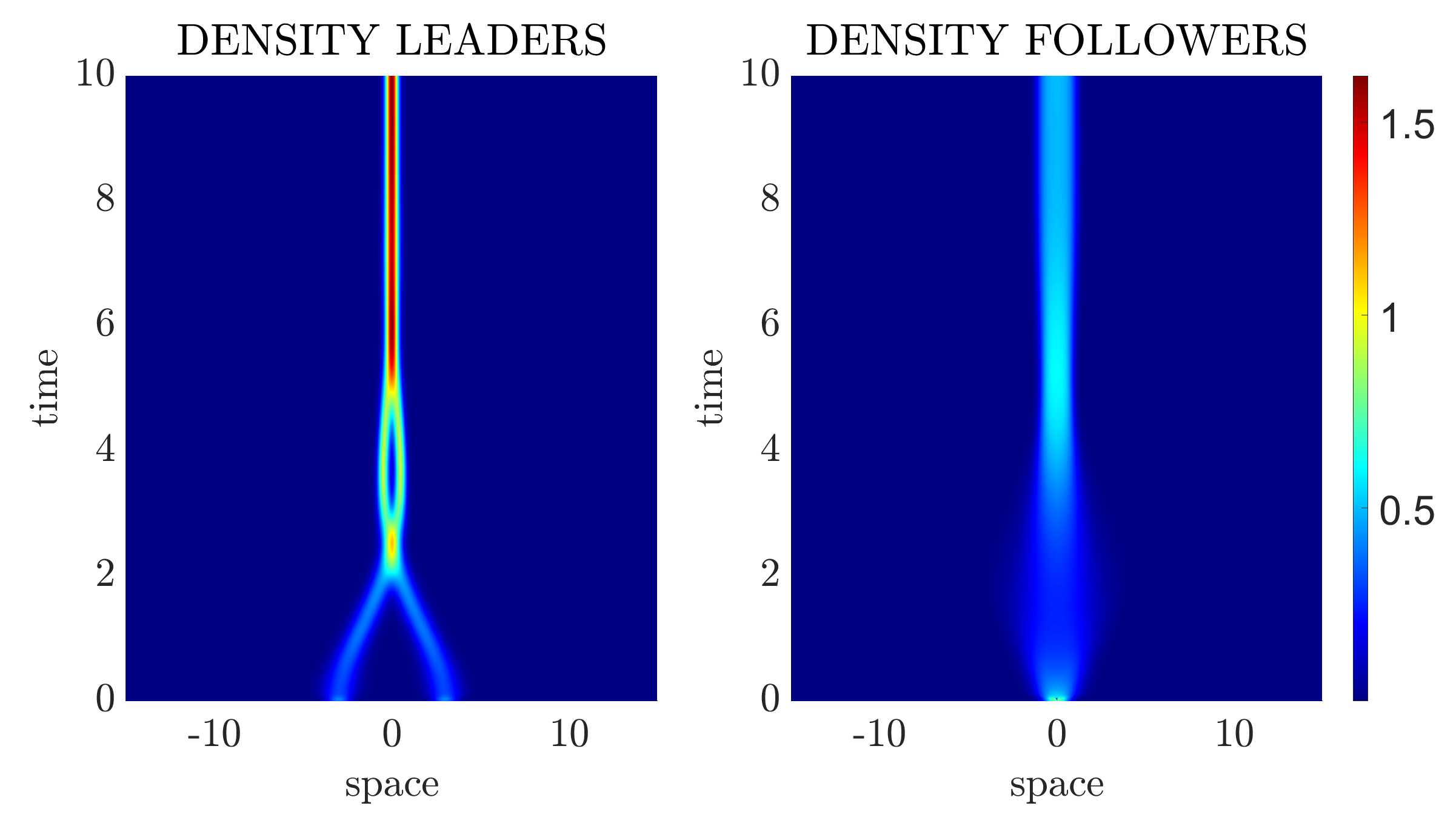}
\caption{macro-macro, $\alpha = 1$}
%\label{fig:i1}
\end{subfigure}

\caption{Test 2. Comparison of micro/macro configurations for different values of $\alpha$. Each subfigure displays, in the left panel, the trajectories of the leaders in the microscopic model and the corresponding leader density in the macroscopic model; the right panel shows the analogous information for the followers.}
\label{fig:test2_1}
\end{figure}

\begin{figure}[ht]
\centering
% Riga 1
\begin{subfigure}[b]{0.3\textwidth}
\includegraphics[width=\linewidth]{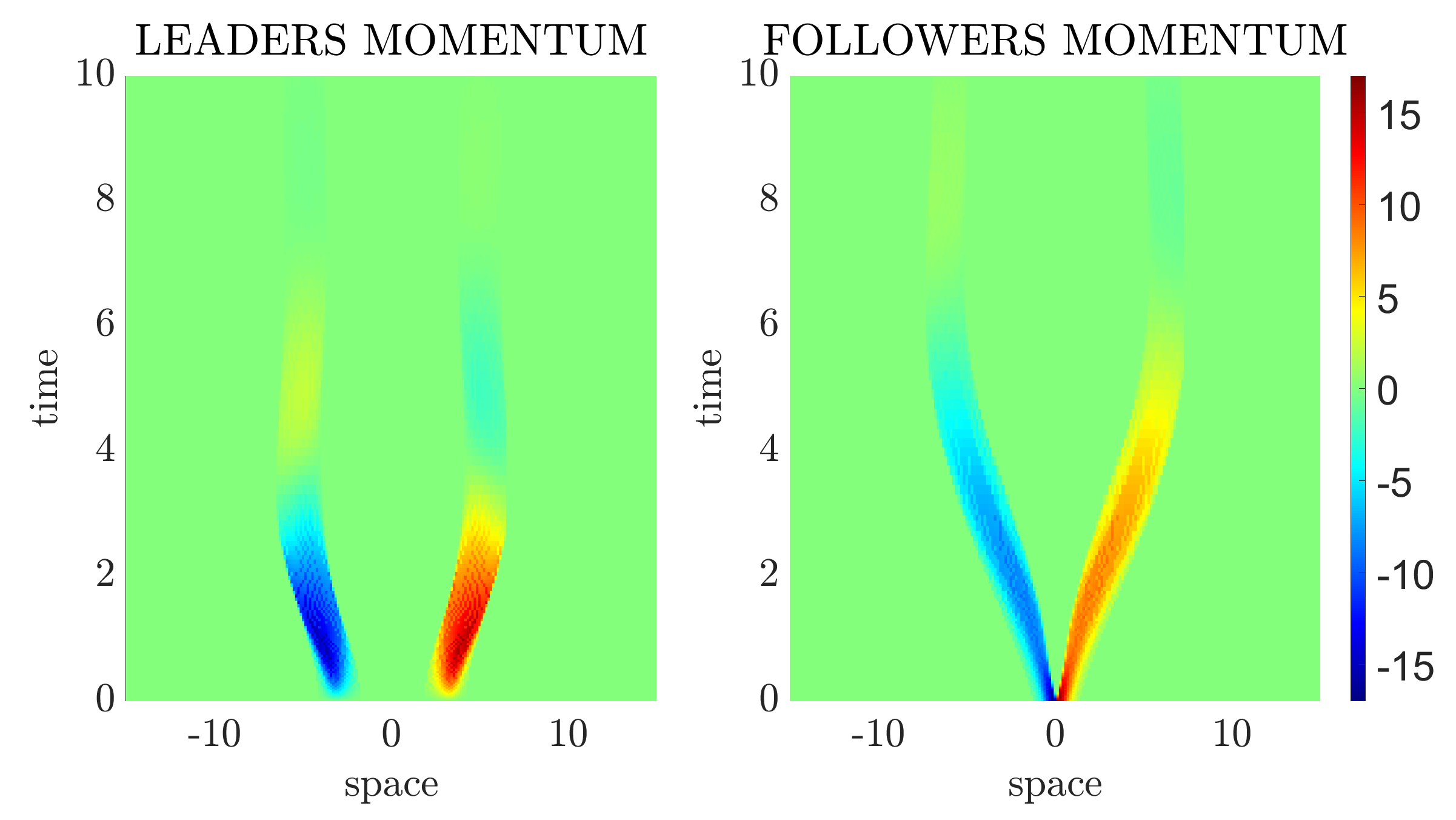}
\caption{micro-micro, $\alpha = 0$}
%\label{fig:a2}
\end{subfigure}
\hfill
\begin{subfigure}[b]{0.3\textwidth}
\includegraphics[width=\linewidth]{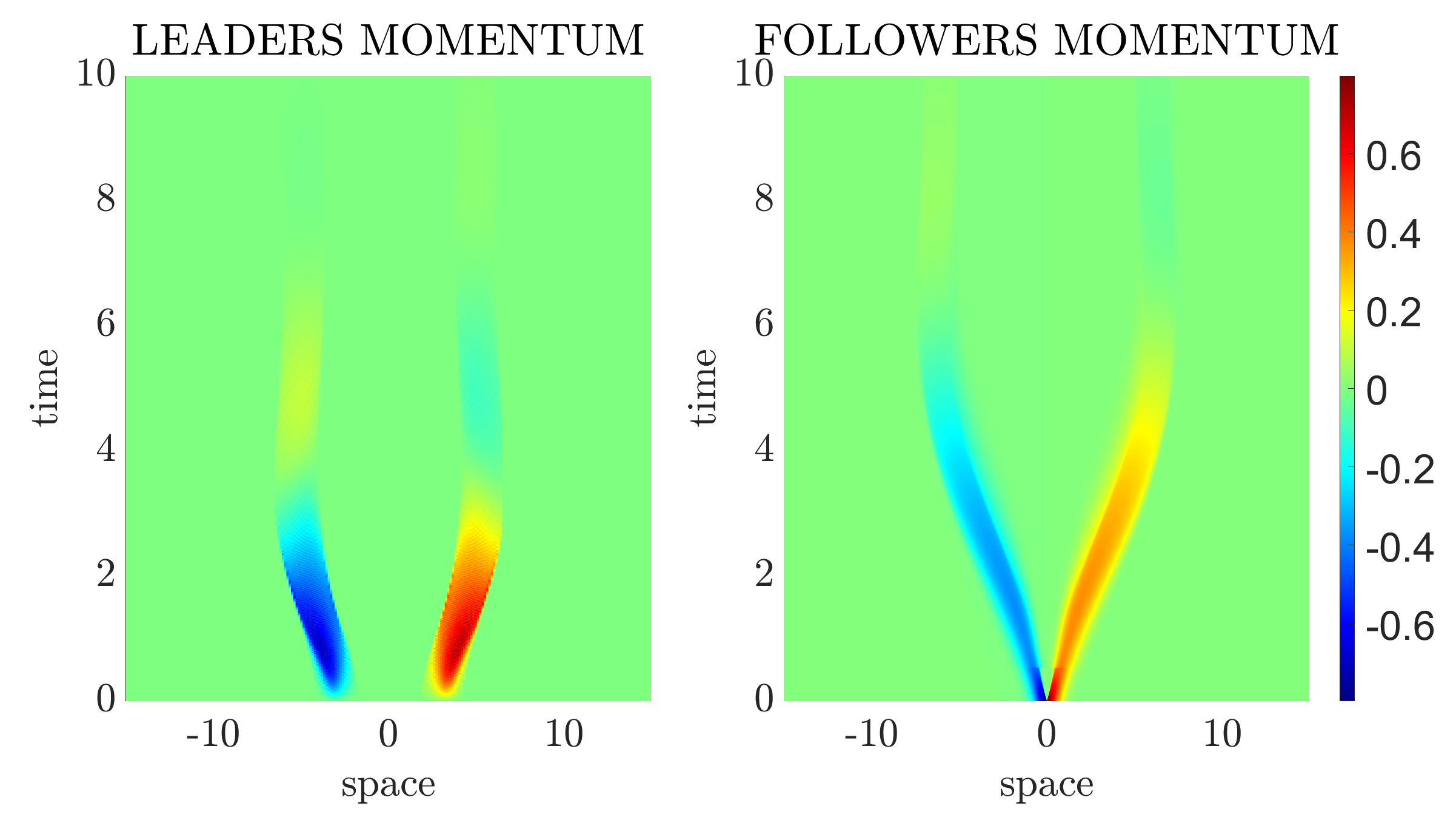}
\caption{micro-macro, $\alpha = 0$}
%\label{fig:b2}
\end{subfigure}
\hfill
\begin{subfigure}[b]{0.3\textwidth}
\includegraphics[width=\linewidth]{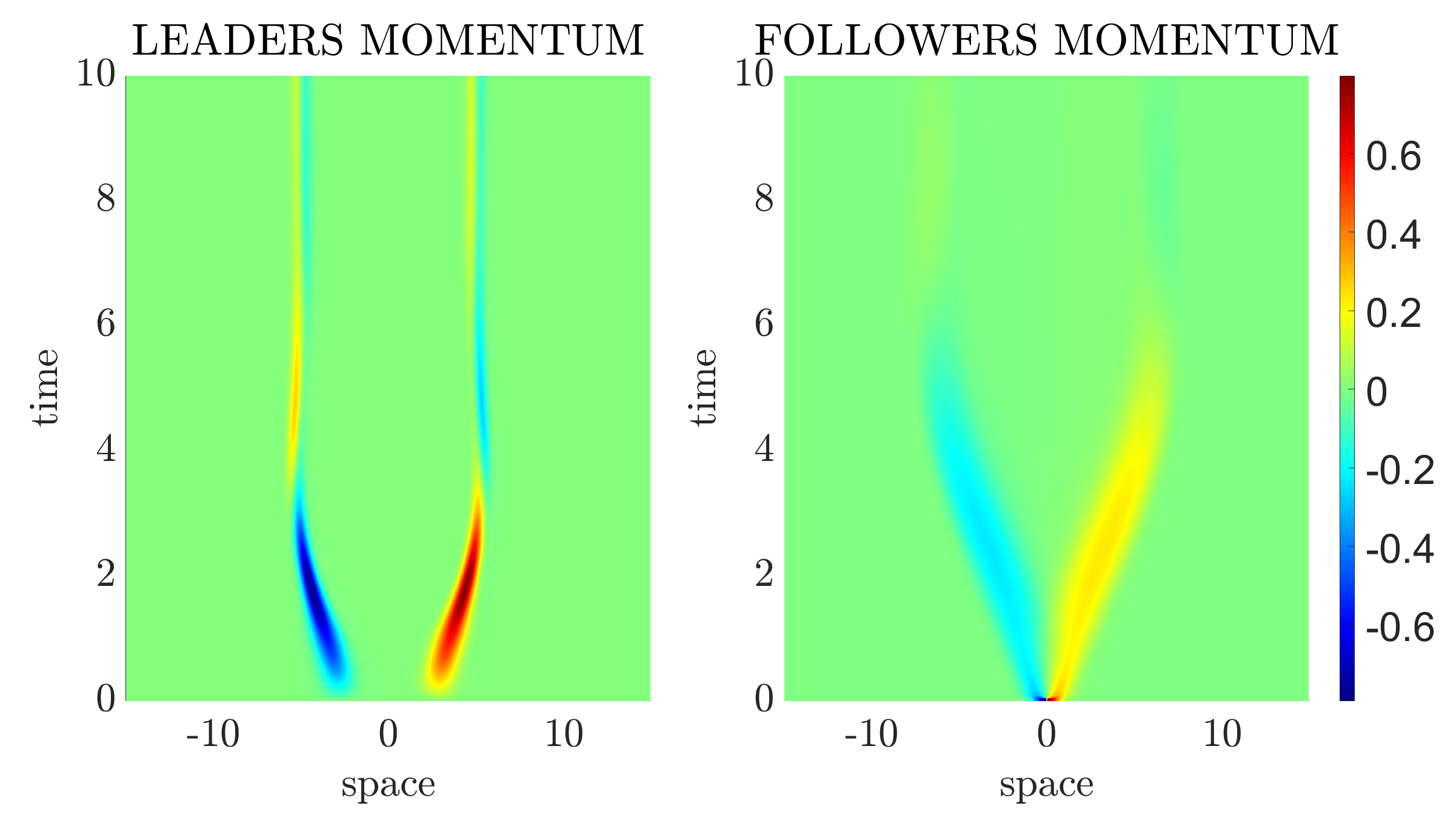}
\caption{macro-macro, $\alpha = 0$}
%\label{fig:c2}
\end{subfigure}

\vspace{0.5em} % spazio tra righe

% Riga 2
\begin{subfigure}[b]{0.3\textwidth}
\includegraphics[width=\linewidth]{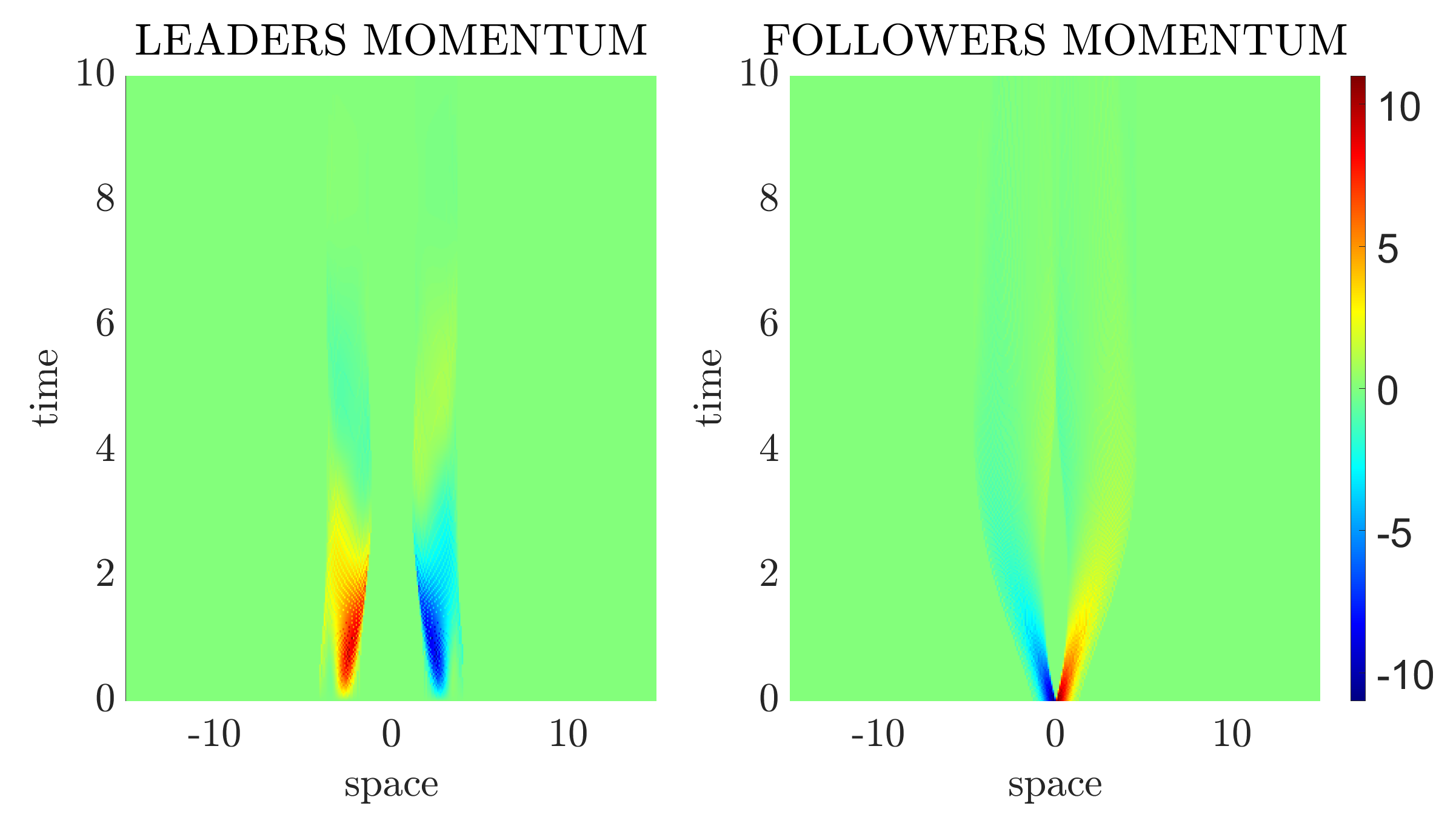}
\caption{micro-micro, $\alpha = 0.5$}
%\label{fig:d2}
\end{subfigure}
\hfill
\begin{subfigure}[b]{0.3\textwidth}
\includegraphics[width=\linewidth]{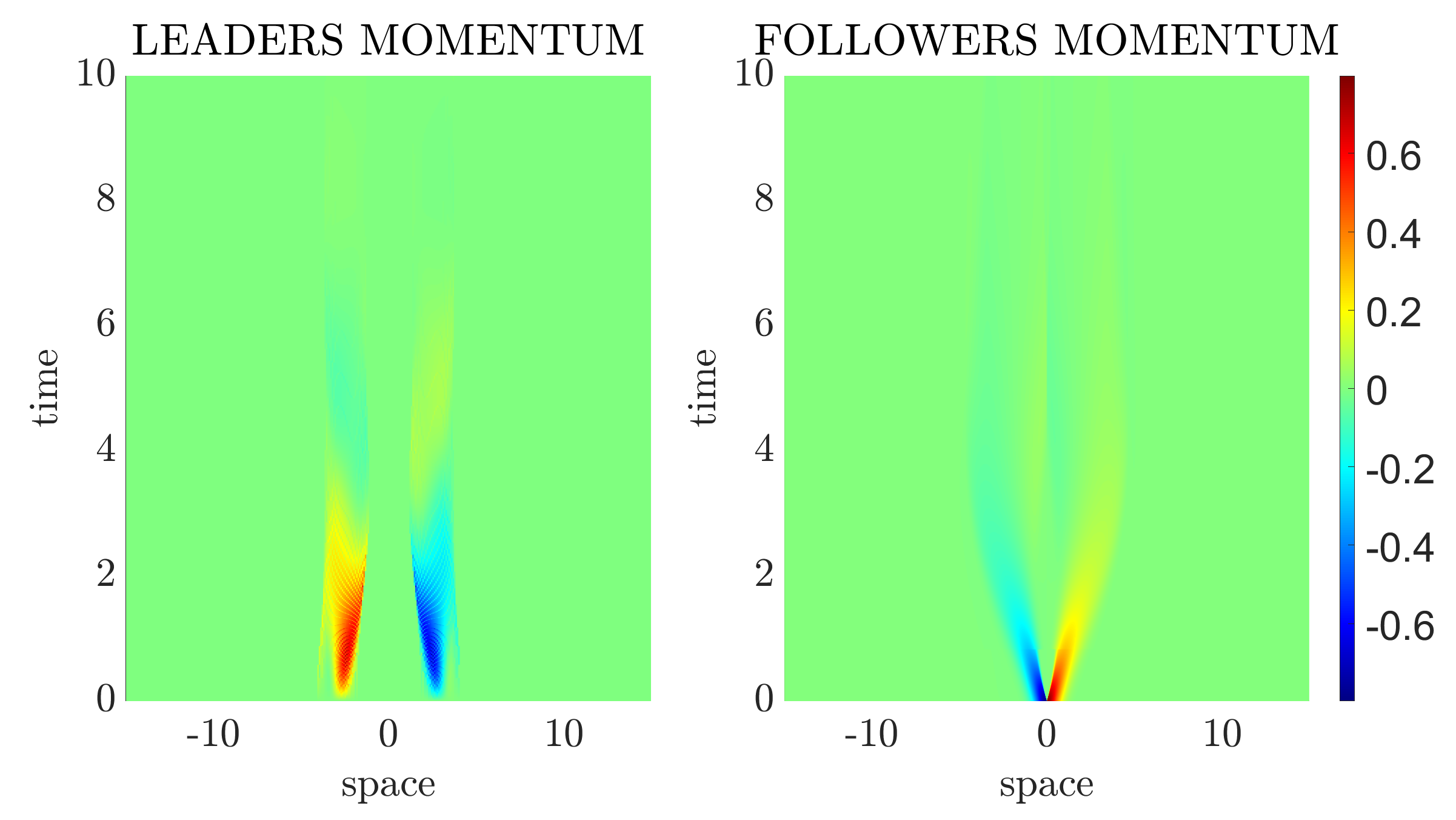}
\caption{micro-macro, $\alpha = 0.5$}
%\label{fig:e2}
\end{subfigure}
\hfill
\begin{subfigure}[b]{0.3\textwidth}
\includegraphics[width=\linewidth]{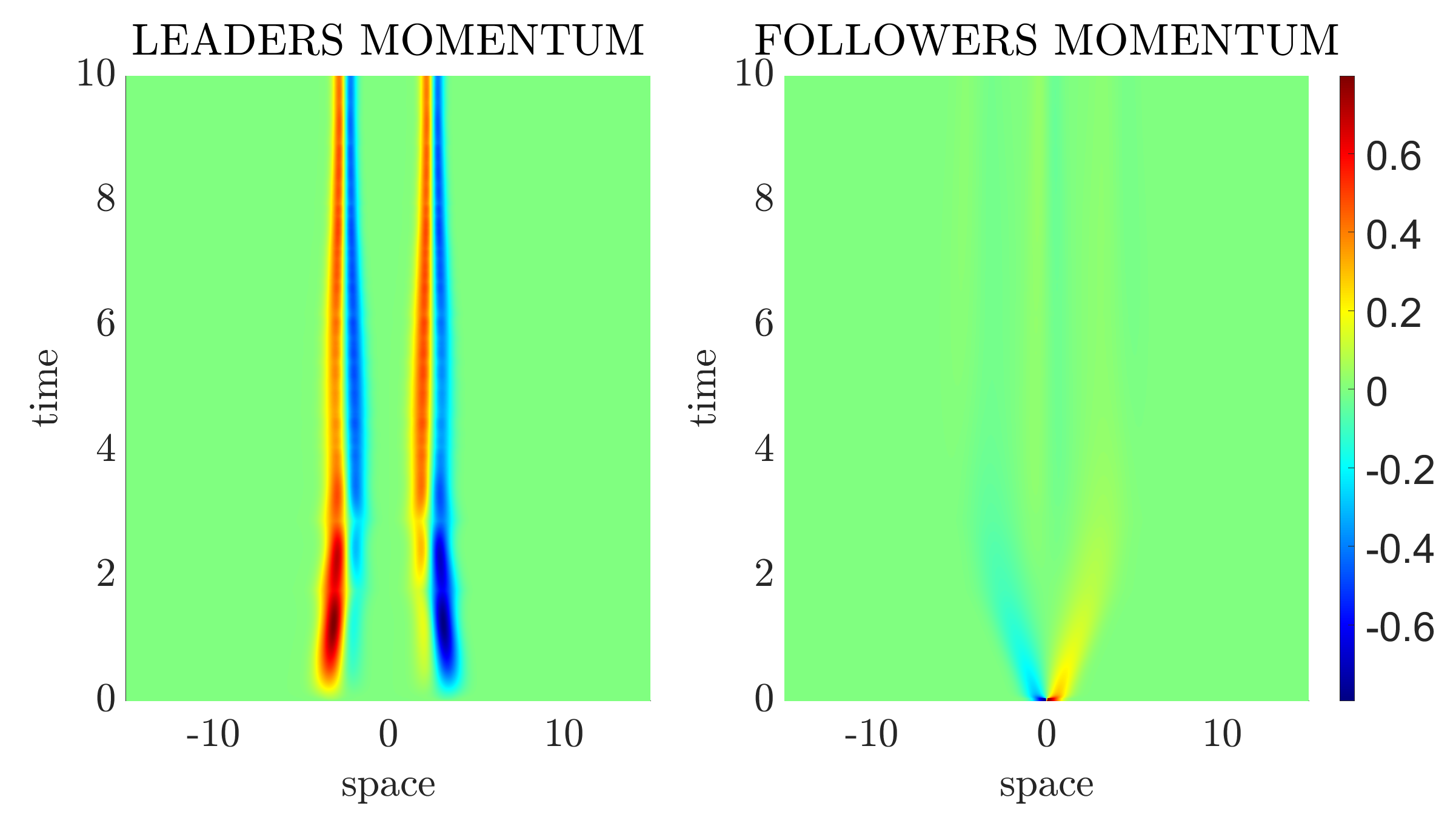}
\caption{macro-macro, $\alpha = 0.5$}
%\label{fig:f2}
\end{subfigure}

\vspace{0.5em} % spazio tra righe

% Riga 3
\begin{subfigure}[b]{0.3\textwidth}
\includegraphics[width=\linewidth]{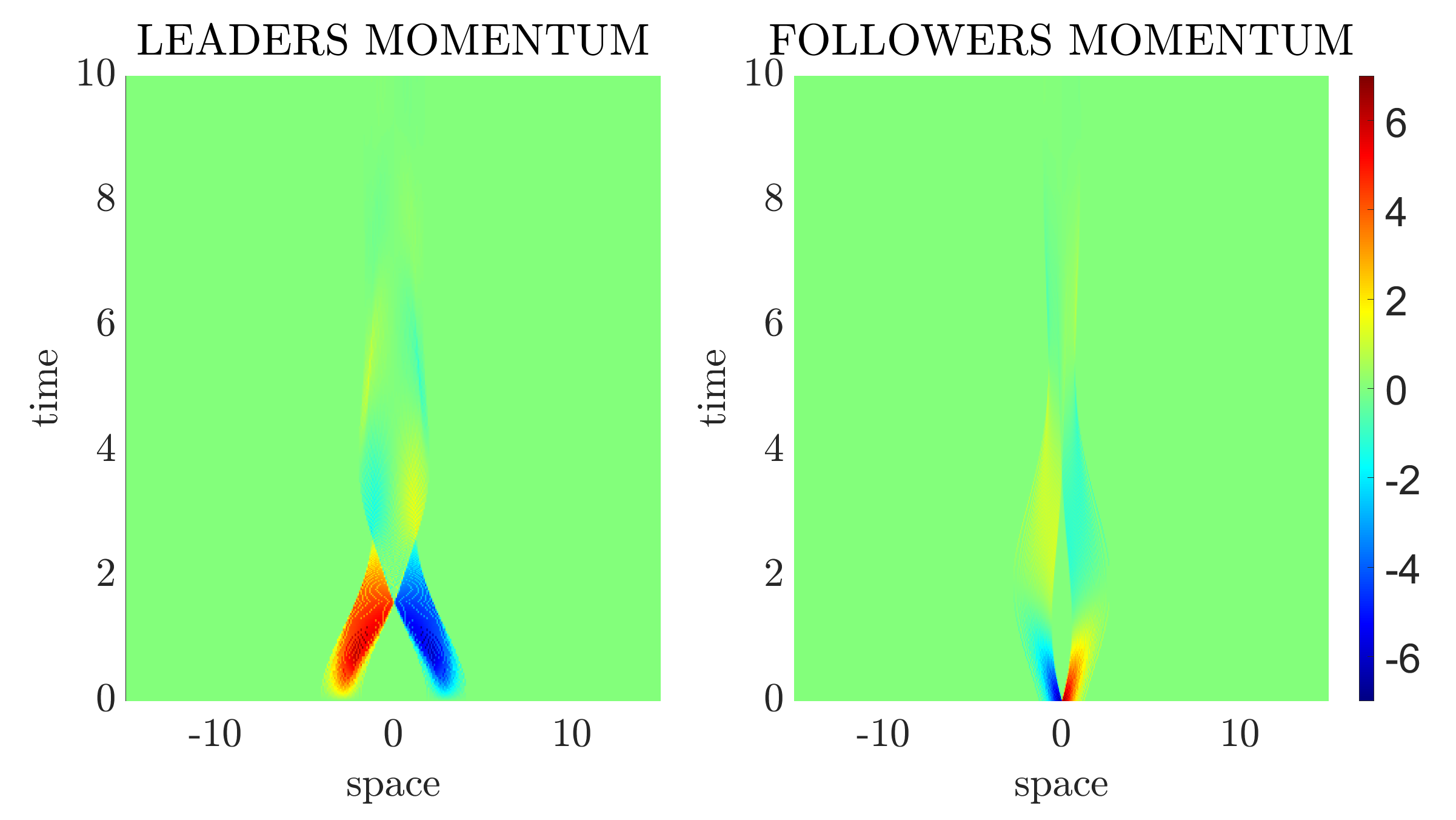}
\caption{micro-micro, $\alpha = 1$}
%\label{fig:g2}
\end{subfigure}
\hfill
\begin{subfigure}[b]{0.3\textwidth}
\includegraphics[width=\linewidth]{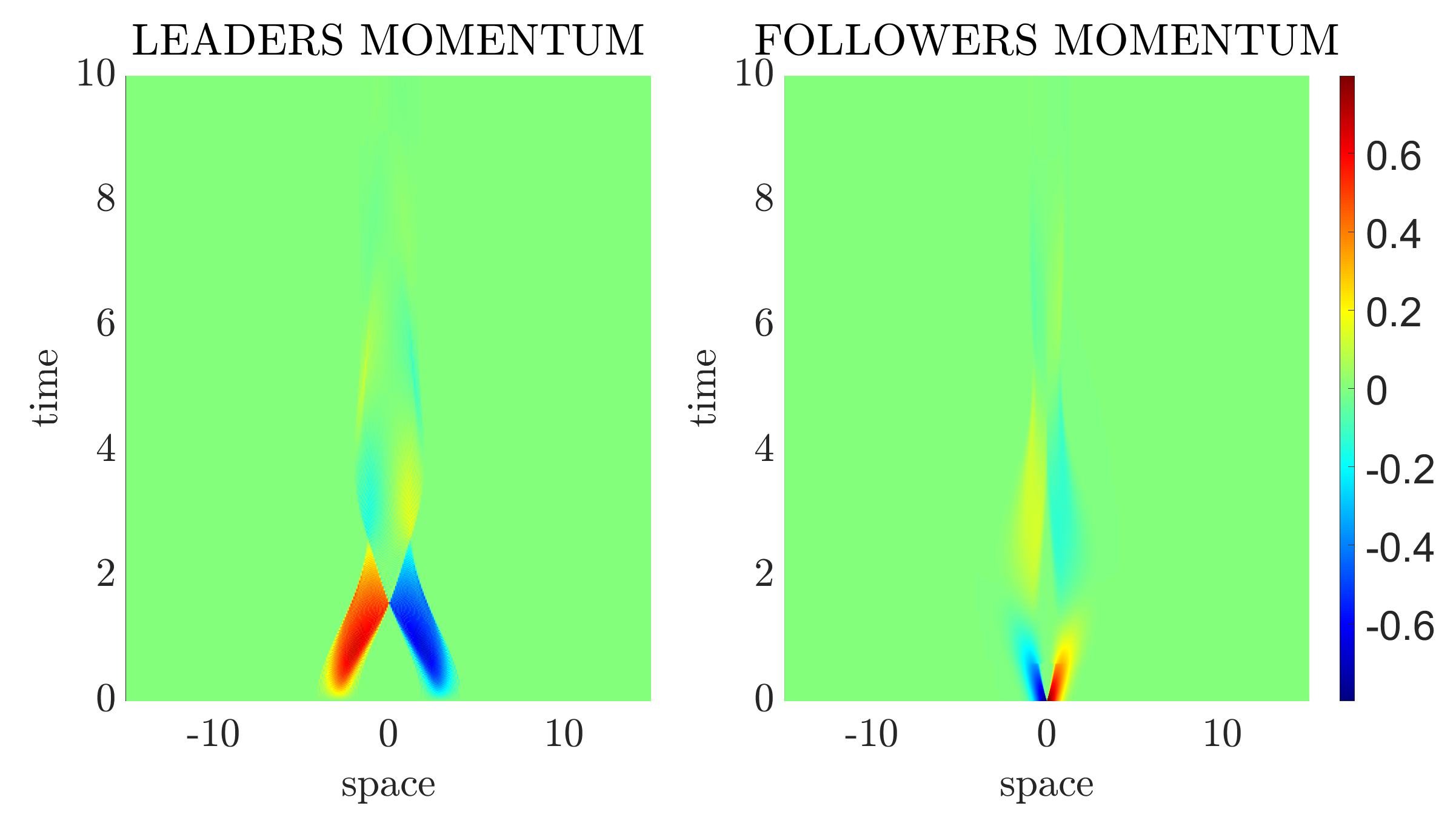}
\caption{micro-macro, $\alpha = 1$}
%\label{fig:h2}
\end{subfigure}
\hfill
\begin{subfigure}[b]{0.3\textwidth}
\includegraphics[width=\linewidth]{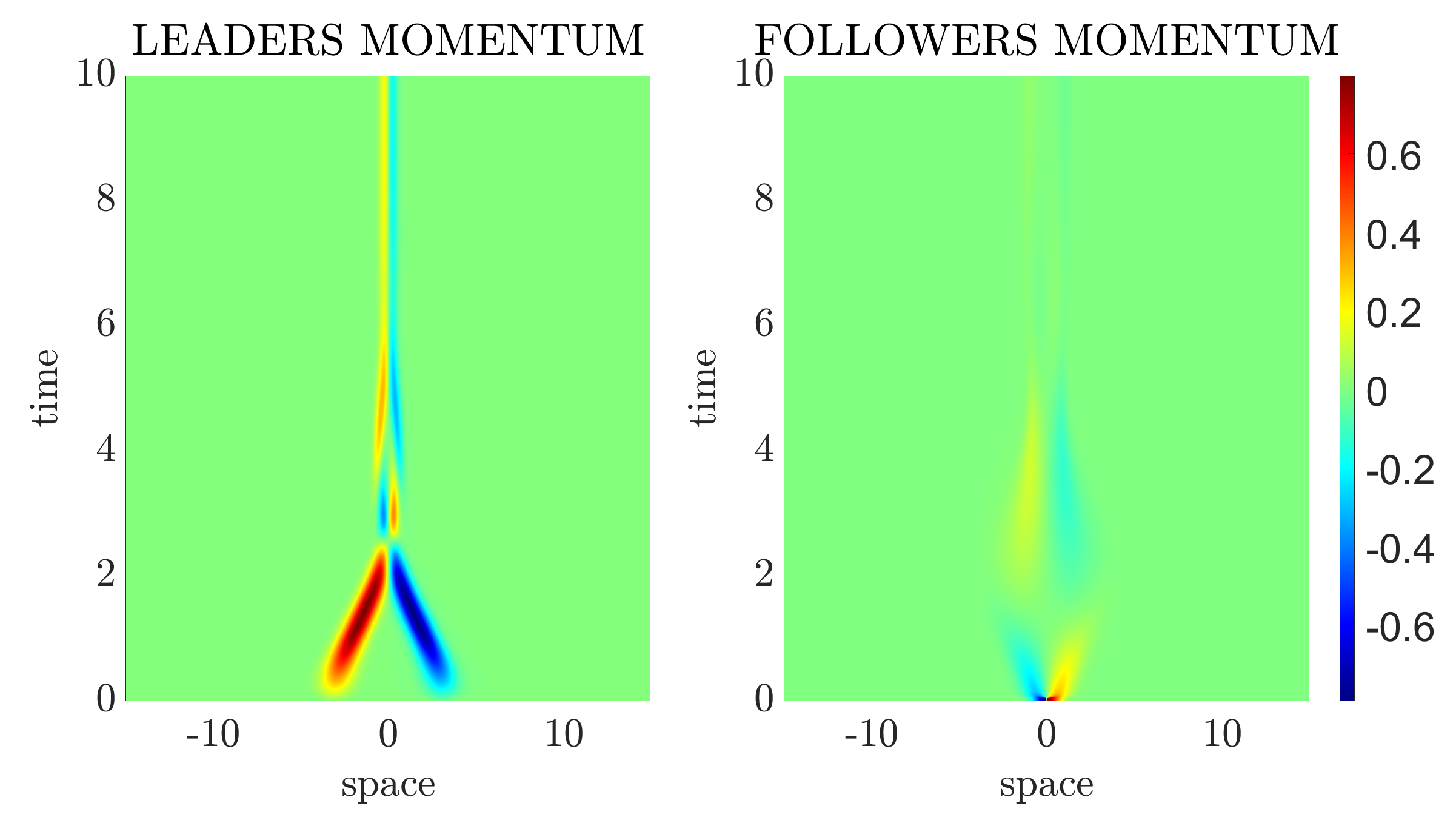}
\caption{macro-macro, $\alpha = 1$}
%\label{fig:i2}
\end{subfigure}

\caption{Test 2. Comparison of micro/macro configurations for different values of $\alpha$. Each subfigure displays, in the left panel, the momentum of the leaders; the right panel shows the analogous information for the followers.}
\label{fig:test2_2}
\end{figure}

\subsection{Test 3: Finite-time blow-up and shock formation with structured leaders} 

In this test, we investigate the behavior of the three models in a scenario where the follower population, in its macroscopic description, develops a finite-time density blow-up and a shock in the velocity field.

The spatial domain is the interval $[-10, 10]$. The leader population is initially divided into two distinct clusters:
\[
\bar \rho_L(0,x,\xi) = \tfrac{1}{2}\Big(\mathsf{G}_{-7}(x)\delta_6(\xi) + \mathsf{G}_{7}(x)\delta_{-6}(\xi)\Big),
\quad
\bar u_L(0,x,\xi) = 0.
\]  Leaders are attracted to two fixed targets located at $\xi = \pm 6$ and interact through an attractive potential $W_L'(r) = r$. The followers are also initially split into two symmetric groups:
\[
\bar \rho_F(0,x) = \tfrac{1}{2}\Big(\mathsf{G}_{-5}(x) + \mathsf{G}_{5}(x)\Big), 
\quad
\bar u_F(0,x) = -\chi_{[-10,0]}(x)+\chi_{[0,10]}(x).
\] Followers interact repulsively among themselves via $W_F'(r) = -\tfrac{r}{2}$, and are attracted to the leaders through $W_C'(r) = r$. A velocity alignment mechanism is included, weighted by the influence function
\(
\phi(r) = \left(1 + |r|^2\right)^{-\beta/2}\), with $\beta = 0.5$. In the microscopic simulations, we use $N = M = 150$ particles, while the macroscopic models employ a grid spacing of $\Delta x = 0.005$.  
Throughout this test, we fix the coupling parameter at $\alpha = 0.5$ to assess how moderate leader–follower attraction affects the onset of concentration and shock formation.

The numerical results are summarized in Fig.~\ref{fig:test3}, which shows the particle trajectories and velocities for the microscopic case, and the corresponding density and mean velocity fields for the macroscopic models. We observe that shortly after time $t = 3$, the trajectories of the two follower groups collapse at $x = 0$, leading to a blow-up in the macroscopic density description. A stationary shock also forms in the follower velocity field, which is not directly visible at the microscopic level.

Additionally, the trajectories of the two leader groups intersect shortly after $t = 1$. However, as can be seen in the macroscopic representation, the leader dynamics remain separated in the structural variable $\xi$ and do not generate a shock in the velocity field. This is because the interaction distribution $g$ is assumed to be a convex combination of Dirac masses, which means that the leader population is described as a collection of distinct subpopulations indexed by $\xi$. As a result, the governing equations for the leaders naturally decompose into two separate subsystems, each representing one leader group that interacts with the followers independently.
Consequently, even if the physical positions of the leader groups coincide, their densities remain separated in the structural space, preventing nonphysical merging and ensuring that no singularities arise in the leaders’ velocity field, see Sec. \ref{app_ffs} for further details. This test highlights how the interplay between repulsion, attraction, and alignment can lead to finite-time singularities in the follower population, while the structured description of the leaders prevents the formation of analogous singularities in their velocity field.

Overall, this test confirms that the proposed multiscale framework is able to capture complex phenomena such as finite-time blow-up and shock formation in the followers' dynamics, while consistently preserving the separation of leader groups through the additional structural variable $\xi$.  
This highlights the importance of the micro-macro coupling in reproducing realistic aggregation and interaction patterns that may not be visible at a purely microscopic level, and demonstrates how the structured continuum description prevents nonphysical merging of subpopulations.  These observations reinforce the theoretical results on the well-posedness and convergence of the models, and provide a solid numerical basis for future investigations of more realistic interaction scenarios and control strategies.

\begin{figure}[ht]
\centering
% Riga 2
\begin{subfigure}[b]{0.3\textwidth}
\includegraphics[width=\linewidth]{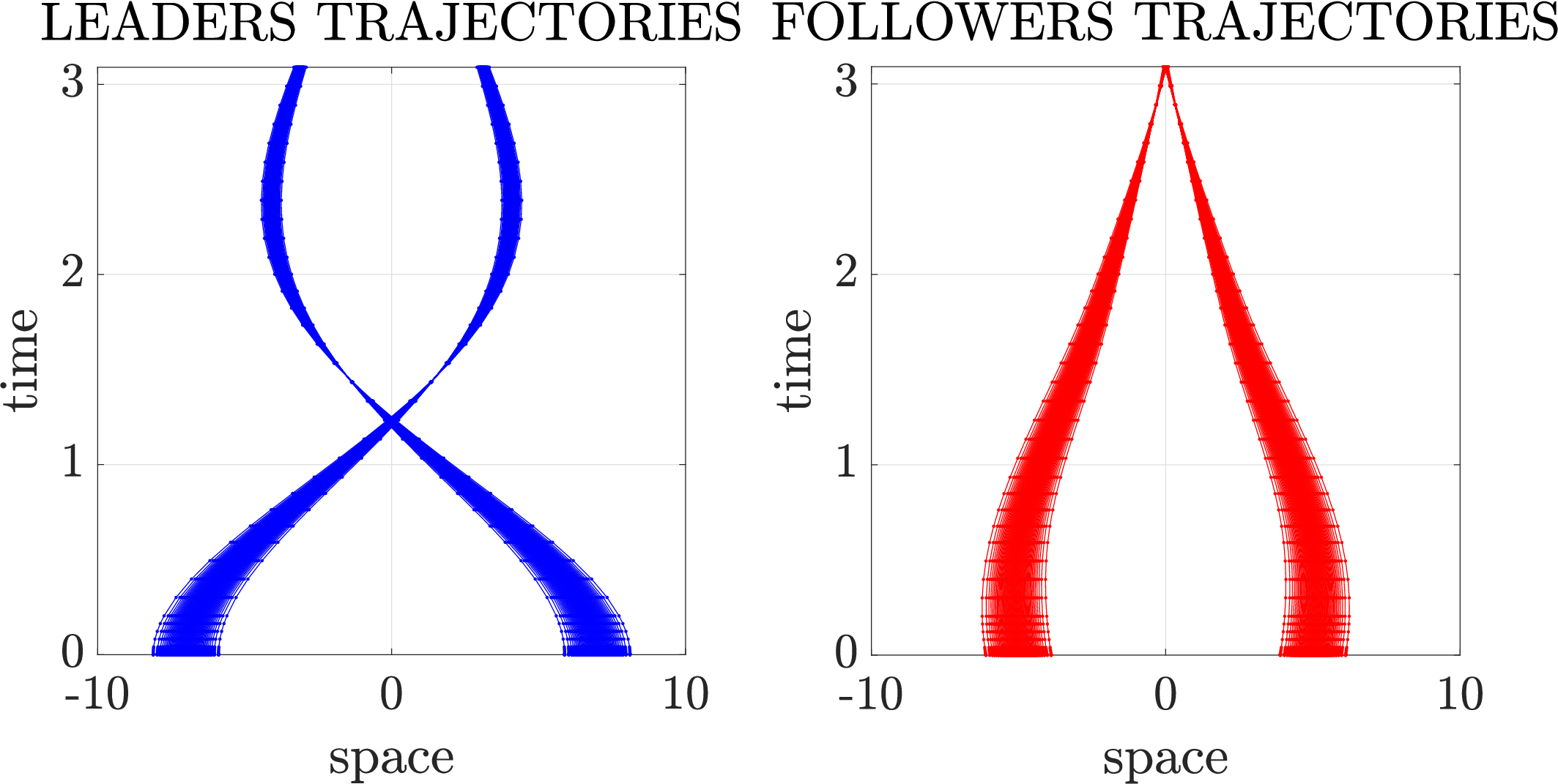}
\caption{micro-micro}
%\label{fig:d3}
\end{subfigure}
\hfill
\begin{subfigure}[b]{0.3\textwidth}
\includegraphics[width=\linewidth]{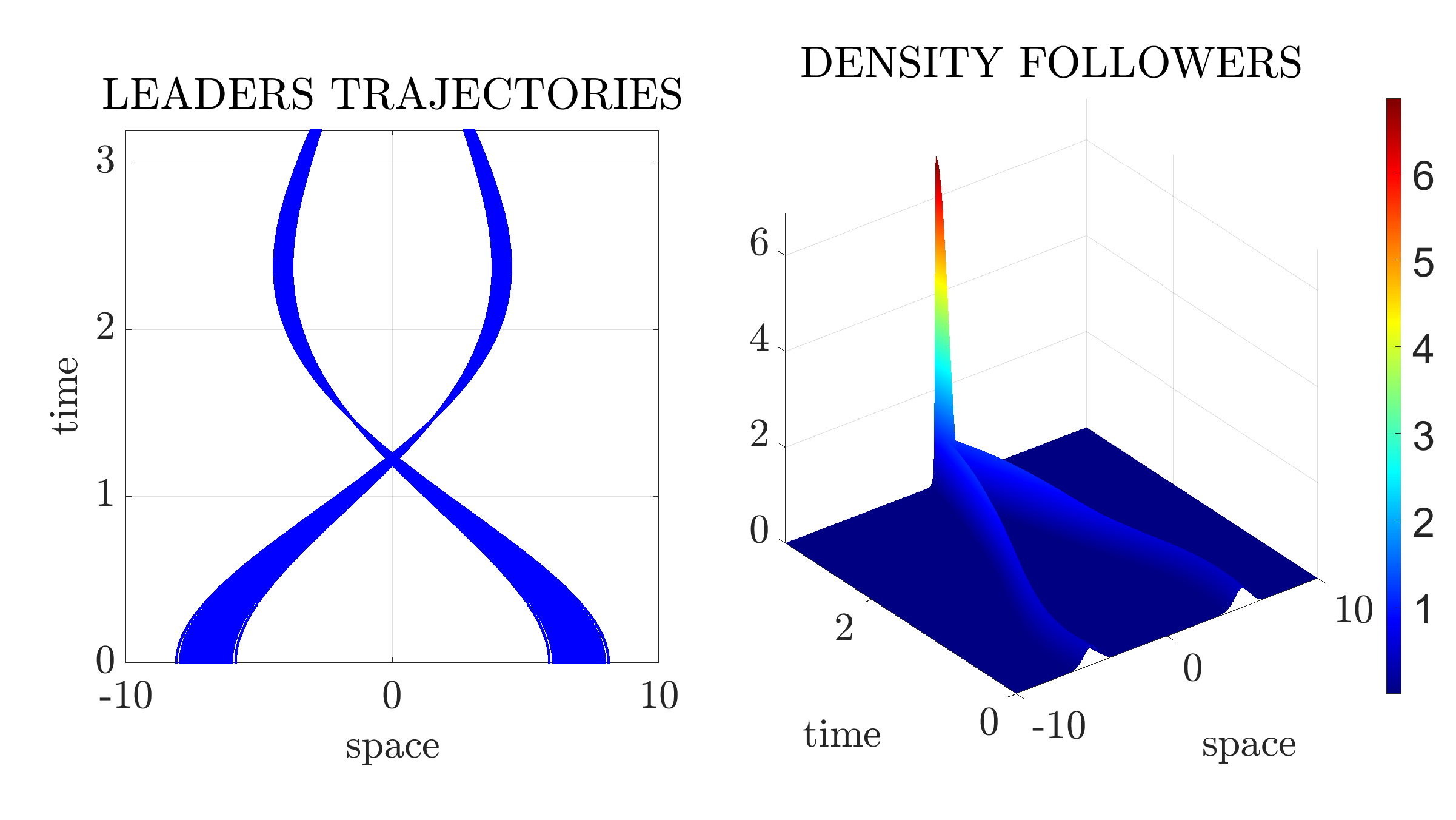}
\caption{micro-macro}
%\label{fig:e3}
\end{subfigure}
\hfill
\begin{subfigure}[b]{0.3\textwidth}
\includegraphics[width=\linewidth]{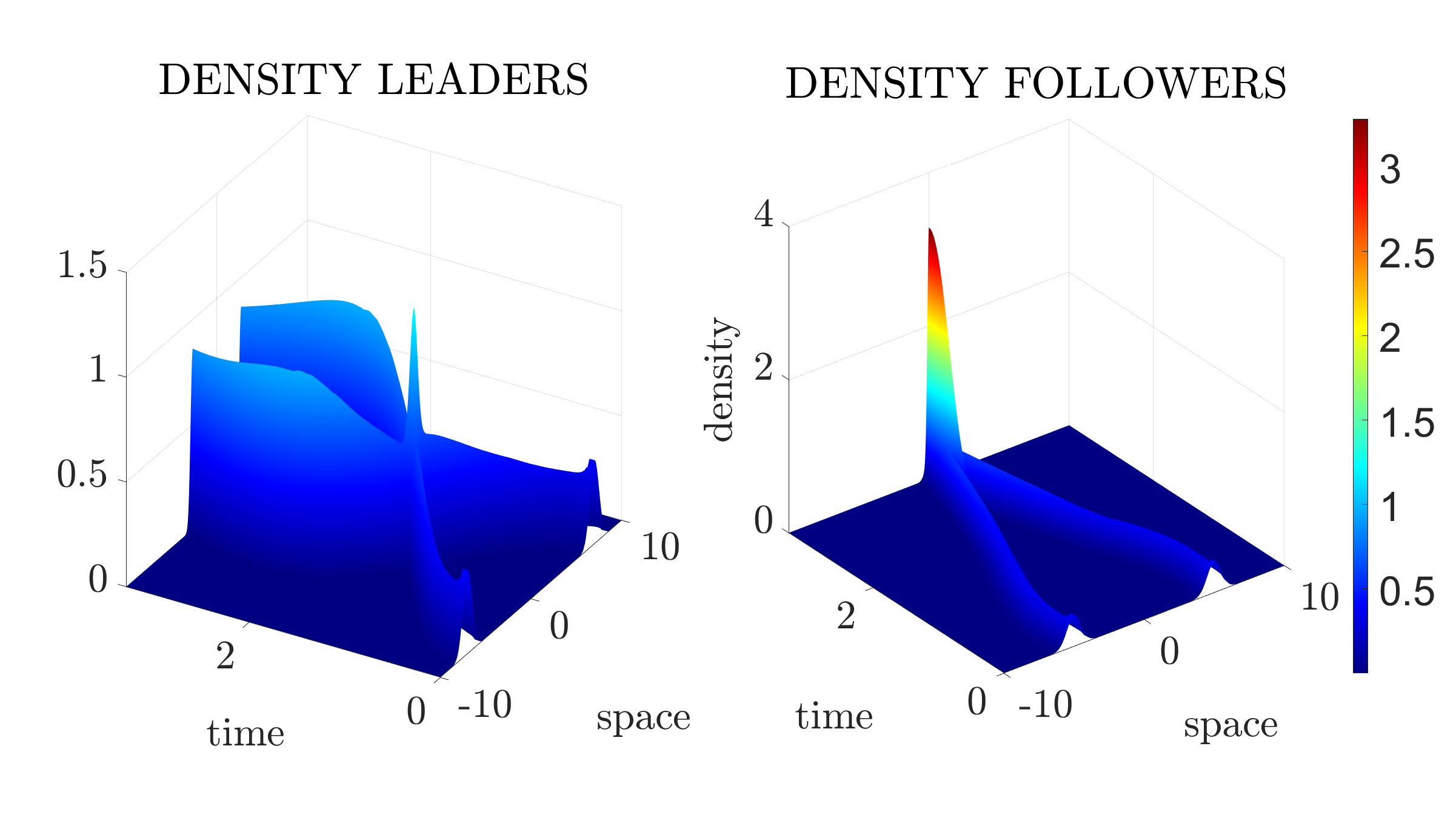}
\caption{macro-macro}
%\label{fig:f3}
\end{subfigure}

\vspace{0.5em} % spazio tra righe

% Riga 2
\begin{subfigure}[b]{0.3\textwidth}
\includegraphics[width=\linewidth]{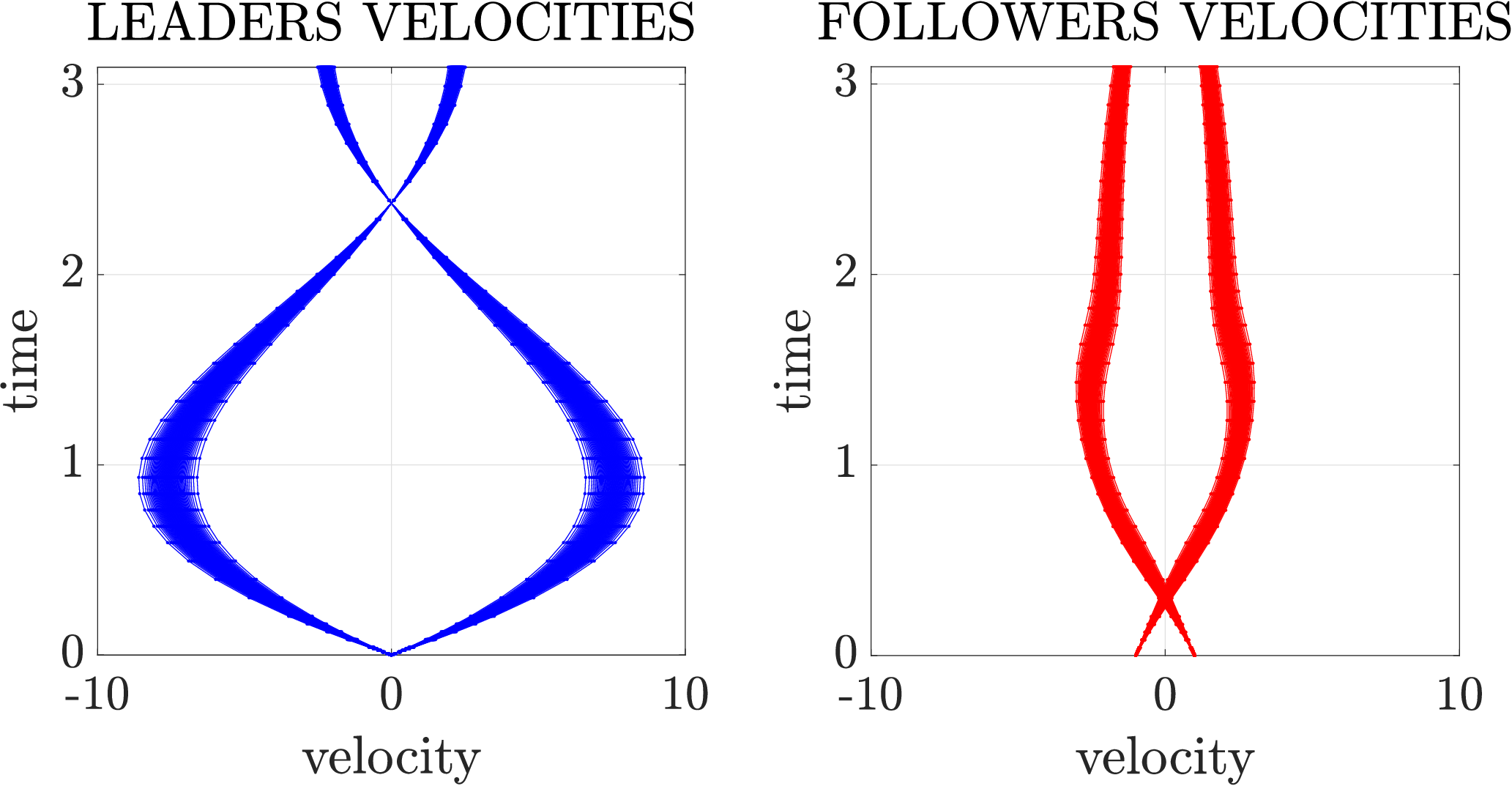}
\caption{micro-micro}
%\label{fig:g3}
\end{subfigure}
\hfill
\begin{subfigure}[b]{0.3\textwidth}
\includegraphics[width=\linewidth]{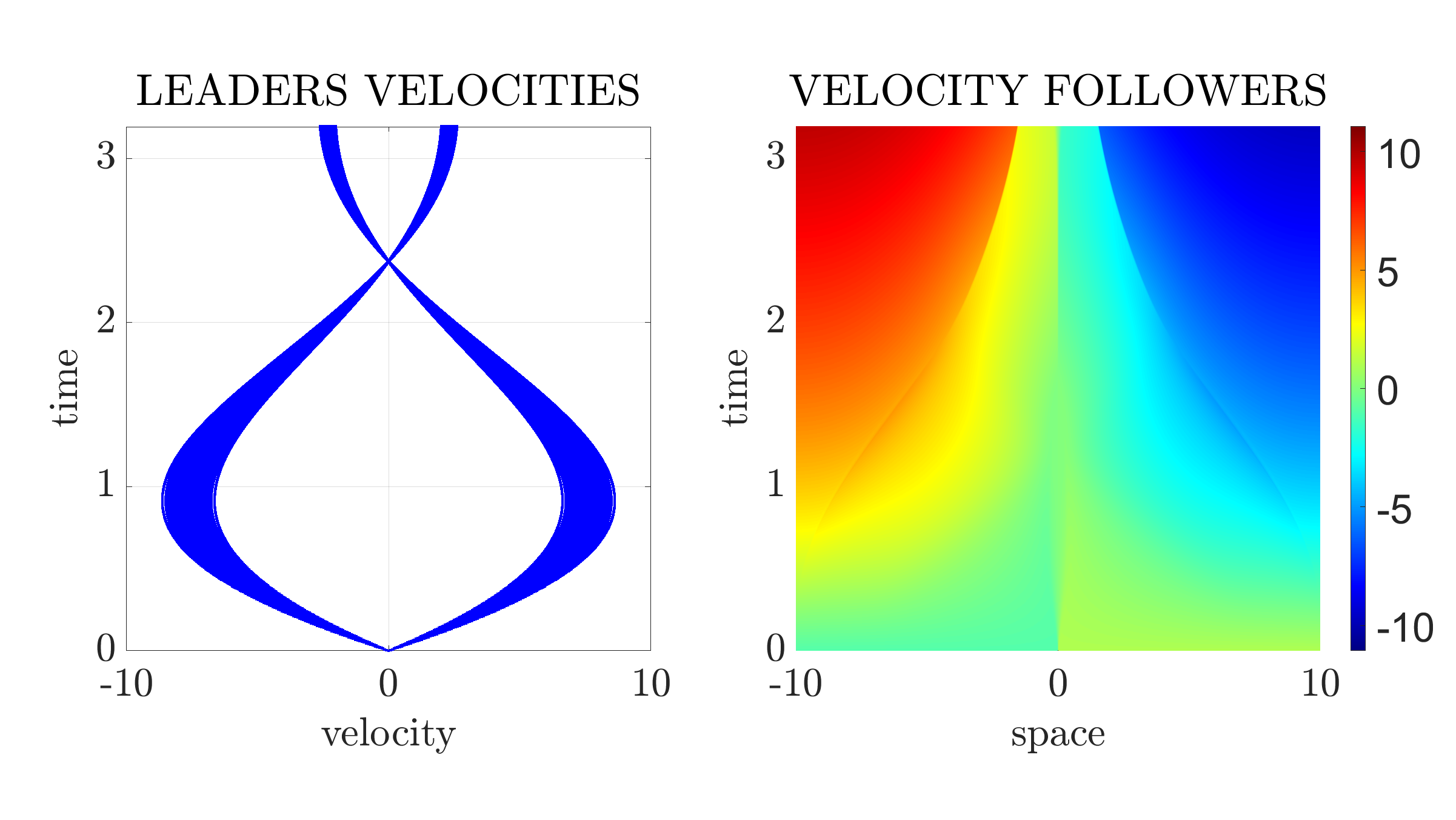}
\caption{micro-macro}
%\label{fig:h3}
\end{subfigure}
\hfill
\begin{subfigure}[b]{0.3\textwidth}
\includegraphics[width=\linewidth]{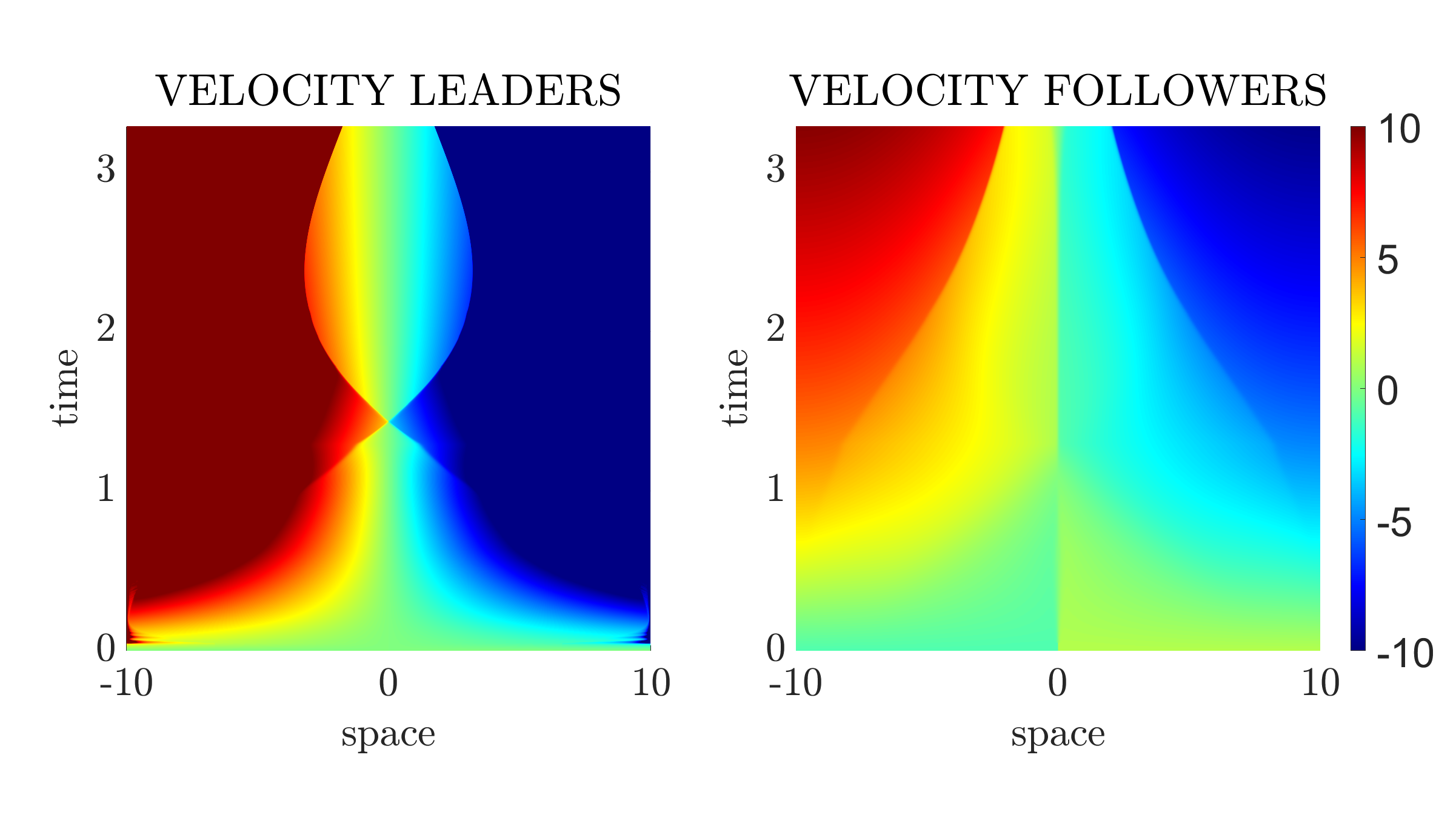}
\caption{macro-macro}
%\label{fig:i3}
\end{subfigure}
\caption{Test 3. Subfigures A, B, and C show the trajectories (in the microscopic case) and the densities (in the macroscopic case) of the two populations of leaders and followers, while subfigures D, E, and F display their velocities.}
\label{fig:test3}
\end{figure}

%%%%%%%%%%%%%%%%%%%%%%%%%%%%%%%%%%%%%%%%%%%%%%%%%%%%%%%%%%%%%%%%%%%%%%%%%%%%%%%%%5
%
%
%                         
%
%
%%%%%%%%%%%%%%%%%%%%%%%%%%%%%%%%%%%%%%%%%%%%%%%%%%%%%%%%%%%%%%%%%%%%%%%%%%%%%%%%%
 \section*{Acknowledgments}

 The research of YPC and SS was supported by the NRF grant no. 2022R1A2C1002820 and RS-202400406821. 

GA and MP acknowledge MUR-PRIN Project 2022 No. 2022N9BM3N ``Efficient numerical schemes and optimal control methods for time-dependent partial differential equations'' financed by the European Union - Next Generation EU. GA and MP are members of the GNCS-INdAM Group.

%%%%%%%%%%%%%%%%%%%%%%%%%%%%%%%%%%%%%%%%%%%%%%%%%%%%%%%%%%%%%%%%%%%%%%%%%%%%%%%%%5
%
%
%                         
%
%
%%%%%%%%%%%%%%%%%%%%%%%%%%%%%%%%%%%%%%%%%%%%%%%%%%%%%%%%%%%%%%%%%%%%%%%%%%%%%%%%%

\appendix

%%%%%%%%%%%%%%%%%%%%%%%%%%%%%%%%%%%%%%%%%%%%%%%%%%%%%%%%%%%%%%%%%%%%%%%%%
%
%
%
%
%
%%%%%%%%%%%%%%%%%%%%%%%%%%%%%%%%%%%%%%%%%%%%%%%%%%%%%%%%%%%%%%%%%%%%%%%%%
\section{Existence and uniqueness of solutions to the limit systems}\label{app_existence}
In this appendix, we establish the local-in-time existence and uniqueness of regular solutions for two distinct mean-field limit systems derived from the leader-follower particle model. The first is the micro-macro system, which describes the coupling of finitely many leader particles with a continuum of followers governed by fluid-type equations. The second is the macro-macro system, in which both leaders and followers are represented as interacting fluid components. We begin with the analysis of the micro-macro system.
%%%%%%%%%%%%%%%%%%%%%%%%%%%%%%%%%%%%%%%%%%%%%%%%%%%%%%%%%%%%%%%%%%%%%%%%%
%
%
%
%
%
%%%%%%%%%%%%%%%%%%%%%%%%%%%%%%%%%%%%%%%%%%%%%%%%%%%%%%%%%%%%%%%%%%%%%%%%%
\subsection{Micro-macro system}
We consider a coupled system in which $N$ leader particles evolve under attraction/repulsive and alignment forces involving both other particles and the continuum fluid, while the fluid component follows pressureless Euler-type dynamics with additional coupling to the particles. Specifically, the system reads:
\begin{align}\label{pf_main}
\begin{aligned}
    &\ddt \bar{x}_i = \bar{v}_i, \quad i = 1,\dots, N, \quad t > 0,\\[1mm]
    &\ddt \bar{v}_i = -(1-\alpha)\left(\bar{x}_i - \bar x_{d_i} \right) -\alpha (\bar{x}_i - \langle x \rangle_{\rho_F} ) - \bar{v}_i - \nabla W_L *   \bar\varrho_{L}^N(\bar{x}_i),\\[1mm]
    &\partial_t \rho_F + \nabla \cdot (\rho_F u_F) = 0, \quad x\in \Omega, \quad t > 0,\\[1mm]
    &\partial_t (\rho_F u_F) + \nabla \cdot (\rho_F u_F\otimes u_F) = -\rho_F\, \nabla W_F * \rho_F - \rho_F\, \nabla W_C *   \bar\varrho_{L}^N \\
    &\hspace{5cm} + \rho_F \iint_{\Omega \times\mathbb{R}^d} \phi(x-y)\,(w - u_F(x))\,\bar\mu_L^N(\dy\dw),
\end{aligned}
\end{align}
where 
\[
\langle x \rangle_{\rho_F} = \int_{\Omega} x \rho_F \,\dx, \quad  \bar\varrho_L^{N} = \frac1N \sum_{i=1}^N \delta_{\bar{x}_i}, 
\quad \bar\mu_L^{N} = \frac1N \sum_{i=1}^N \delta_{(\bar{x}_i, \bar v_i)}.
\]

To streamline notation, we suppress the subscript $F$ in $\rho_F$ and $u_F$ and simply write $\rho$ and $u$ when no ambiguity arises. The resulting system takes the form:
\begin{align}\label{fluid}
\begin{aligned}
    &\ddt \bar{x}_i = \bar{v}_i, \quad i = 1,\dots, N, \quad t > 0,\\[1mm]
    &\ddt \bar{v}_i = -(1-\alpha)\left(\bar{x}_i - \bar x_{d_i} \right) -\alpha (\bar{x}_i - \langle x \rangle_{\rho} ) - \bar{v}_i - \nabla W_L *   \bar\varrho_{L}^N(\bar{x}_i),\\[1mm]
    &\partial_t \rho + \nabla \cdot (\rho u) = 0, \quad x\in \Omega, \quad t > 0,\\[1mm]
    &\partial_t (\rho u) + \nabla \cdot (\rho u\otimes u) = -\rho\, \nabla W_F * \rho - \rho\, \nabla W_C * \bar\varrho_{L}^N  + \rho \iint_{\Omega \times\mathbb{R}^d } \phi(x-y)\,(w-u(x))\,\bar\mu_L^N(\dy\dw),
\end{aligned}
\end{align}
with initial data
\[
(\rho,u)|_{t=0} = (\rho_0,u_0), \quad x \in \Omega,
\]
and prescribed initial particle positions and velocities
\[
\{ (\bar{x}_i(0),\bar{v}_i(0))\}_{i=1}^N = \{ (\bar{x}_{i0},\bar{v}_{i0})\}_{i=1}^N.
\]

Our aim is to prove the local-in-time existence and uniqueness of regular solutions to this coupled system. We begin by introducing the notion of regularity.
\begin{definition}\label{def_strong}
Let $s > d/2+1$ and $T>0$. A triple $( \{(\bar{x}_i,\bar{v}_i) \}_{i=1}^N, \rho, u)$ is said to be a regular solution of \eqref{fluid} on the time interval $[0,T]$ if
\begin{itemize}
    \item[(i)] The particle component $\{ (\bar{x}_i,\bar{v}_i) \}_{i=1}^N$ is a classical solution of the particle equations in \eqref{fluid} and satisfies
    \[
    (\bar{x}_i,\bar{v}_i) \in \calC^1[0,T] \times \calC^1[0,T].
    \]
    \item[(ii)] The fluid component $(\rho, u)$ satisfies
    \[
    (\rho, u) \in C\bigl([0,T]; H^s(\Omega)\bigr) \times C\bigl([0,T]; H^{s+1}(\Omega)\bigr),
    \]
    and solves the fluid equations in \eqref{fluid} in the sense of distributions.
\end{itemize}
\end{definition}

\begin{theorem}\label{thm_ex2}
Let $s > d/2+1$. Suppose that the interaction potentials and communication weights satisfy assumptions ${\bf (A1)}$--${\bf (A3)}$ and additionally, 
\[
\nabla W_F \in \calW^{1,1}(\Omega), \quad \nabla W_C \in H^{s+1}(\Omega), \quad \phi \in H^{s+1}(\Omega).
\]
Then, for any constants $0 < N < M$, there exists a positive time $T^*$ (depending only on $N$ and $M$) such that if
\[
\max\left\{\|\rho_0\|_{H^s}, \, \|u_0\|_{H^{s+1}}\right\}\leq N, \quad \inf_{x \in \Omega}\rho_0(x) >0, \quad \rho_0 \in L^1_2(\Omega),
\]
and given initial particle data $\{ (\bar{x}_{i0},\bar{v}_{i0})\}_{i=1}^N$, the coupled particle-fluid system \eqref{fluid} has a unique regular solution $(\{ (\bar{x}_i,\bar{v}_i) \}_{i=1}^N, \rho, u)$ in the sense of Definition \ref{def_strong} satisfying
\[
\max\left\{ \sup_{0\le t\le T^*}\|\rho(t,\cdot)\|_{H^s}, \, \sup_{0\le t\le T^*}\|u(t,\cdot)\|_{H^{s+1}} \right\}\le M.
\]
\end{theorem}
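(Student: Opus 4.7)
The plan is to construct a regular solution by a linearized Picard iteration in which, at each step, the fluid subsystem and the leader ODE are solved separately and then reassembled. Setting $(\bar x_i^0,\bar v_i^0,\rho^0,u^0)=(\bar x_{i0},\bar v_{i0},\rho_0,u_0)$, I would first solve the linear continuity equation $\partial_t\rho^{n+1}+\nabla\cdot(\rho^{n+1}u^n)=0$ and, after dividing the momentum equation by $\rho$ (legitimate thanks to $\inf\rho_0>0$ and its propagation), the linear transport--damping equation
\[
\partial_t u^{n+1}+(u^n\cdot\nabla)u^{n+1}+\Phi^n(x)\,u^{n+1}=-\nabla W_F\ast\rho^n-\nabla W_C\ast\bar\varrho_L^{N,n}+\tfrac{1}{N}\sum_{j=1}^N\phi(x-\bar x_j^n)\bar v_j^n,
\]
where $\Phi^n(x):=\frac{1}{N}\sum_j\phi(x-\bar x_j^n)\ge 0$ and $\bar\varrho_L^{N,n}:=\frac{1}{N}\sum_i\delta_{\bar x_i^n}$; I would then update the leader ODEs via Picard--Lindel\"of with coefficients built from $\rho^{n+1}$ and $\bar\varrho_L^{N,n}$. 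Positivity $\rho^{n+1}>0$ is propagated by the method of characteristics along the flow of $u^n$, using $\nabla\cdot u^n\in L^\infty$ from $H^{s+1}\hookrightarrow C^1$, and the $L^1_2$ moment bound on $\rho^{n+1}$ (checked as in Remark~\ref{rmk_mom}) keeps $\langle x\rangle_{\rho^{n+1}}$ finite.

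The core analytical step will be the uniform $H^s\times H^{s+1}$ estimate on a short time interval $[0,T^*]=[0,T^*(N,M)]$. Applying $\partial^\alpha$ with $|\alpha|\le s$ (resp.\ $|\alpha|\le s{+}1$) and pairing in $L^2$, Kato--Ponce commutator and Moser product estimates will yield
\begin{align*}
\frac{d}{dt}\|\rho^{n+1}\|_{H^s}^2 &\lesssim (1+\|u^n\|_{H^{s+1}})\|\rho^{n+1}\|_{H^s}^2,\\
\frac{d}{dt}\|u^{n+1}\|_{H^{s+1}}^2 &\lesssim (1+\|u^n\|_{H^{s+1}})\|u^{n+1}\|_{H^{s+1}}^2+\|R^n\|_{H^{s+1}}^2,
\end{align*}
where the damping term contributes a favorable sign (modulo lower-order factors absorbed into the right-hand side) and $R^n$ collects the three nonlocal forcings. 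The bounds $\|\nabla W_C\ast\bar\varrho_L^{N,n}\|_{H^{s+1}}\lesssim\|\nabla W_C\|_{H^{s+1}}$ and $\|\tfrac1N\sum_j\phi(\cdot-\bar x_j^n)\bar v_j^n\|_{H^{s+1}}\lesssim\|\phi\|_{H^{s+1}}\max_j|\bar v_j^n|$ follow directly from the regularity of the kernels; for the remaining term I would use $\|\nabla W_F\ast\rho^n\|_{H^{s+1}}\lesssim\|\nabla W_F\|_{\calW^{1,1}}\|\rho^n\|_{H^s}$, obtained by distributing one derivative onto the kernel and the remaining derivatives onto $\rho^n$ via Young's inequality (possible since $s\ge 1$). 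A bootstrap argument then fixes $T^*$ so that all iterates remain in the closed ball of radius $M$, the needed ODE bounds $\max_i(|\bar x_i^n(t)|+|\bar v_i^n(t)|)\le C$ being supplied by Gr\"onwall applied to the leader equations.

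For convergence, I would pass to one derivative lower: denoting $\delta\rho^n=\rho^{n+1}-\rho^n$, $\delta u^n=u^{n+1}-u^n$, and $\delta\bar z_i^n=(\bar x_i^{n+1}-\bar x_i^n,\bar v_i^{n+1}-\bar v_i^n)$, the linear structure of the iteration produces a closed inequality
\[
\frac{d}{dt}\calN^n(t)\le C_1\calN^n(t)+C_2\calN^{n-1}(t),\qquad \calN^n:=\|\delta\rho^n\|_{L^2}^2+\|\delta u^n\|_{H^1}^2+\tfrac1N\sum_i|\delta\bar z_i^n|^2,
\]
with $C_1,C_2=C(N,M,T^*)$. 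Shrinking $T^*$ if necessary yields a contraction, so the iterates form a Cauchy sequence in $C([0,T^*];L^2\times H^1\times\R^{2Nd})$. Weak-$*$ precompactness in $L^\infty_tH^s\times L^\infty_tH^{s+1}$ combined with interpolation transfers the limit into the required class, with continuity in time following from the limiting equations. Applying the same differential inequality to the difference of two solutions sharing initial data yields uniqueness.

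I expect the main obstacle to be the asymmetric regularity requirement $\rho\in H^s$ versus $u\in H^{s+1}$, which is dictated by the continuity equation propagating $H^s$ of $\rho$ only when $\nabla u\in H^s\cap L^\infty$, while the velocity equation then requires the nonlocal source $\nabla W_F\ast\rho$ to lie in $H^{s+1}$ under the mild hypothesis $\nabla W_F\in\calW^{1,1}$. Making this estimate compatible with the Kato--Ponce commutator bound for the convective term $(u^n\cdot\nabla)u^{n+1}$ at the $H^{s+1}$ level is the delicate point; the other nonlocalities are routine thanks to the $H^{s+1}$ regularity of $\nabla W_C$ and $\phi$. A secondary but necessary ingredient is the simultaneous propagation through the iteration of $\inf\rho>0$ (needed to divide the momentum equation by $\rho$) and of the moment $\int(1+|x|^2)\rho\,dx$ (needed so that $\langle x\rangle_\rho$ in the leader ODEs is well defined), both of which must be monitored uniformly in $n$.
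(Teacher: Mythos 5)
Your scheme is, in all essentials, the paper's own proof: the same linearized iteration (continuity equation transported by $u^n$, a linear transport equation for $u^{n+1}$ after dividing the momentum equation by the positive density), positivity and second-moment propagation along characteristics, the same kernel bounds (in particular $\|\nabla W_F*\rho\|_{H^{s+1}}\lesssim\|\nabla W_F\|_{\mathcal{W}^{1,1}}\|\rho\|_{H^s}$ by placing one derivative on the kernel, and $H^{s+1}$ bounds for the $\nabla W_C$- and $\phi$-convolutions against the empirical measures), commutator energy estimates closing at the $H^s\times H^{s+1}$ level on a short interval depending only on $N$ and $M$, and finally a Cauchy/contraction argument one derivative down ($L^2\times H^1$ for the fluid, $\ell^2$ for the particles), upgraded by interpolation and the uniform bounds; uniqueness by the same difference estimate. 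The paper obtains the Cauchy property through a factorial-decay bound $Ct^n/n!$ rather than by shrinking $T^*$, but that is immaterial.

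The one step that does not close as you wrote it is the difference estimate for the leader equations when $\Omega=\R^d$ (which the theorem covers; $\rho_0\in L^1_2$ is assumed precisely for that case). The velocity difference $\delta\bar v_i^n$ is driven by $\alpha\bigl(\langle x\rangle_{\rho^{n+1}}-\langle x\rangle_{\rho^{n}}\bigr)$, and $\bigl|\int_\Omega x\,(\rho^{n+1}-\rho^{n})\,\dx\bigr|$ is \emph{not} controlled by $\|\delta\rho^n\|_{L^2}$, since the weight $x$ is unbounded; hence your functional $\calN^n$ does not dominate this coupling term and the claimed inequality $\frac{\textnormal{d}}{\textnormal{d}t}\calN^n\le C_1\calN^n+C_2\calN^{n-1}$ is not immediate. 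The paper closes this by differentiating the first-moment difference in time: using both continuity equations, $\frac{\textnormal{d}}{\textnormal{d}t}\int_\Omega(\rho^{n+1}-\rho^{n})\,x\,\dx=\int_\Omega\rho^{n+1}(u^{n}-u^{n-1})\,\dx+\int_\Omega(\rho^{n+1}-\rho^{n})\,u^{n-1}\,\dx$, so that $|\langle x\rangle_{\rho^{n+1}}-\langle x\rangle_{\rho^{n}}|^2(t)$ is bounded by time integrals of $\|u^{n}-u^{n-1}\|_{L^2}^2$ and $\|\rho^{n+1}-\rho^{n}\|_{L^2}^2$; this integrated form still feeds into the Gr\"onwall iteration and yields the Cauchy estimate. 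The same device is needed in your uniqueness argument (and, analogously, the uniform bound on $\langle x\rangle_{\rho^{n+1}}$ in the a priori estimates is what the $L^1_2$ moment propagation is for, which you did note). On the torus the issue disappears; the rest of your outline, including the Kato--Ponce point you flag and the $\calW^{1,1}$ Young trick, coincides with the paper's treatment.
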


\begin{remark}
The regularity assumptions imposed on the interaction potential $W_C$ and the communication weight $\phi$ are more restrictive than those on $W_F$. This is because the terms involving $W_C$ and $\phi$ capture the interactions with the particle system, for which we only assume $\mathcal{P}_2$-regularity on $\bar\mu_L^N$. Consequently, stronger regularity for $W_C$ and $\phi$ is required to obtain regular solutions for the fluid component in \eqref{pf_main}.
\end{remark}

\begin{remark}In the periodic domain case, $\Omega = \T^d$, the assumptions $\nabla W_F \in \mathcal{W}^{1,1}(\Omega)$ and $\rho_0 \in L^1_2(\Omega)$ can be removed. 
\end{remark}

 %%%%%%%%%%%%%%%%%%%%%%%%%%%%%%%%%%%%%%%%%%%%%%%%%%%%%%%%%%%%%%%%%%%%%%%%%
%
%
%
%
%
%%%%%%%%%%%%%%%%%%%%%%%%%%%%%%%%%%%%%%%%%%%%%%%%%%%%%%%%%%%%%%%%%%%%%%%%%

\subsubsection{Approximate solutions}

We construct approximate solutions $(\rho^n,u^n)$ to the system \eqref{fluid} by solving the following iterative scheme:
\begin{align}\label{fluid_app}
\begin{aligned}
    &\ddt \bar{x}_i^{n+1} = \bar{v}_i^{n+1}, \quad i = 1,\dots, N, \quad t > 0,\\[1mm]
    &\ddt \bar{v}_i^{n+1} = -(1-\alpha)\left(\bar{x}_i^{n+1} - \bar x_{d_i}\right) -\alpha \left(\bar{x}_i^{n+1} - \langle x \rangle_{\rho^{n+1}} \right) - \bar{v}_i^{n+1} - \nabla W_L *  \bar\varrho_{L}^{N, n+1}(\bar{x}_i^{n+1}),\\[1mm]
&\partial_t \rho^{n+1} + u^n \cdot \nabla \rho^{n+1} + \rho^{n+1}\, \nabla \cdot u^n = 0,\\[1mm]
&\rho^{n+1}\, \partial_t u^{n+1} + \rho^{n+1}\, u^n \cdot \nabla u^{n+1} = -\rho^{n+1}\, \nabla W_F * \rho^{n+1} - \rho^{n+1}\, \nabla W_C *  \bar\varrho_L^{N, n+1}\\[1mm]
&\quad + \rho^{n+1} \iint_{\Omega\times\mathbb{R}^d } \phi(x-y) ( w - u^{n+1}(x))\, \bar\mu_L^{N, n+1}(\dy\dw),
\end{aligned}
\end{align}
where 
\[
 \langle x \rangle_{\rho^{n}} = \int_{\Omega} x \rho^n \,\dx, \quad  \bar\varrho_L^{N, n} = \frac1N \sum_{i=1}^N \delta_{\bar{x}_i^{n}}, 
 \quad 
\bar\mu_L^{N, n} = \frac1N \sum_{i=1}^N \delta_{(\bar{x}_i^{n}, \bar{v}_i^{n})}.
\]
for $n \in \N$.
The initial data and the first iterate are prescribed by
\[
(\bar{x}_i^{n}(0), \bar{v}_i^{n}(0))= (\bar x_i(0), \bar v_i(0)), \quad (\rho^n(0,x), u^n(0,x)) = (\rho_0(x), u_0(x)), \quad \forall\, n \ge 1,\; x \in \Omega,
\]
and
\[
u^0(t,x) = u_0(x), \quad (t,x) \in \mathbb{R}_+ \times \Omega.
\]

Our goal in this part is to prove the following proposition.

\begin{proposition}\label{prop_app}
Suppose that the initial data $(\rho_0, u_0)$ satisfy the conditions in Theorem \ref{thm_ex2}. Then there exists a time $T^*>0$, depending only on $M$ and $N$, such that the system \eqref{fluid_app} admits a sequence of unique regular solutions $\{ (\rho^n, u^n)\}_{n\in\N}$ on $[0,T^*]$ satisfying
\begin{equation}\label{uni_app0}
\max\left\{\sup_{n\in\N} \sup_{0\le t\le T^*}   \|\rho^n(t)\|_{H^s}, \, \sup_{n\in\N} \sup_{0\le t\le T^*}\|u^n(t)\|_{H^{s+1}}  \right\} \le M.
\end{equation}
\end{proposition}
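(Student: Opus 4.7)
The plan is to prove Proposition \ref{prop_app} by induction on $n$, showing that the uniform bound \eqref{uni_app0} propagates from level $n$ to $n+1$ on a common interval $[0,T^*]$ depending only on $M$ and $N$. The base case $n=0$ is immediate since $u^0 \equiv u_0$ satisfies $\|u^0\|_{H^{s+1}} = \|u_0\|_{H^{s+1}} \le N \le M$. The key observation is that the scheme \eqref{fluid_app} decouples into three sequential sub-problems at each level: first solve the linear continuity equation for $\rho^{n+1}$ with transport velocity $u^n$; then, with $\rho^{n+1}$ (hence $\langle x \rangle_{\rho^{n+1}}$) in hand, solve the particle ODEs for $\{(\bar x_i^{n+1}, \bar v_i^{n+1})\}_{i=1}^N$; finally, with both ingredients available, solve a linear transport-damping equation for $u^{n+1}$.

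For the first sub-problem I would appeal to standard $H^s$ theory for linear transport-compression equations with coefficient $u^n \in H^{s+1}$, which (since $s > d/2+1$) gives a unique $\rho^{n+1} \in C([0,T^*]; H^s)$. Positivity is preserved along characteristics via the representation $\rho^{n+1}(t, X^n(t,x)) = \rho_0(x) \exp\bigl(-\int_0^t (\nabla \cdot u^n)(s, X^n(s,x))\,\textnormal{d}s\bigr) > 0$, and the second-moment bound $\|\rho^{n+1}(t)\|_{L^1_2} \le C$ propagates as in Remark \ref{rmk_mom}, so that $|\langle x \rangle_{\rho^{n+1}}(t)|$ remains uniformly controlled. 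A standard $H^s$ energy estimate together with $\|u^n\|_{H^{s+1}} \le M$ yields $\frac{\textnormal{d}}{\textnormal{d}t}\|\rho^{n+1}\|_{H^s} \le C_M \|\rho^{n+1}\|_{H^s}$, hence $\|\rho^{n+1}(t)\|_{H^s} \le N e^{C_M t} \le M$ on a short time interval. The second sub-problem is immediate: the $2Nd$-dimensional ODE system for the particles has globally Lipschitz right-hand side (using \textbf{(A2)} and the already-bounded $\langle x \rangle_{\rho^{n+1}}$), so Picard-Lindel\"of produces a unique solution on $[0,T^*]$ together with a uniform bound on $\max_i(|\bar x_i^{n+1}| + |\bar v_i^{n+1}|)$.

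For the third sub-problem I would use positivity of $\rho^{n+1}$ to divide the momentum equation, rewriting it as a linear inhomogeneous transport-relaxation equation
\[
\partial_t u^{n+1} + u^n \cdot \nabla u^{n+1} + a^{n+1}(t,x)\, u^{n+1} = F^{n+1}(t,x),
\]
with damping coefficient $a^{n+1}(t,x) = \frac{1}{N}\sum_{i=1}^N \phi(x - \bar x_i^{n+1})$ and source
\[
F^{n+1} = -\nabla W_F * \rho^{n+1} - \nabla W_C * \bar\varrho_L^{N,n+1} + \frac{1}{N}\sum_{i=1}^N \phi(x - \bar x_i^{n+1})\, \bar v_i^{n+1}.
\]
The hypotheses $\nabla W_F \in \calW^{1,1}(\Omega)$, $\nabla W_C \in H^{s+1}(\Omega)$, $\phi \in H^{s+1}(\Omega)$ place $a^{n+1}$ and $F^{n+1}$ in $C([0,T^*]; H^{s+1})$ with norms bounded purely in terms of $M$ and the particle/moment bounds from the previous steps. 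Crucially, the empirical measures $\bar\varrho_L^{N,n+1}$ and $\bar\mu_L^{N,n+1}$ enter only through convolution with smooth kernels, so their measure-theoretic irregularity causes no difficulty. Standard linear transport theory then furnishes a unique $u^{n+1} \in C([0,T^*]; H^{s+1})$.

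The hard part will be closing the $H^{s+1}$ energy estimate for $u^{n+1}$, since the transport coefficient $u^n$ is itself only at the $H^{s+1}$ level and a naive derivation formally would require $H^{s+2}$ regularity. The remedy is the tame commutator/Moser estimate (Kato--Ponce) for $[\partial^\alpha, u^n \cdot \nabla]$ which, combined with the Sobolev embedding $\|\nabla u^n\|_{L^\infty} \lesssim \|u^n\|_{H^{s+1}}$ valid for $s > d/2 + 1$, yields
\[
\frac{\textnormal{d}}{\textnormal{d}t}\|u^{n+1}\|_{H^{s+1}} \le C_M \|u^{n+1}\|_{H^{s+1}} + C_M,
\]
whence $\|u^{n+1}(t)\|_{H^{s+1}} \le (N + C_M t)e^{C_M t} \le M$ for $t \le T^*$ sufficiently small. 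Choosing $T^*$ so that all three smallness conditions arising in the sub-problems are simultaneously met (each depending only on $M$ and $N$) closes the induction and yields Proposition \ref{prop_app}.
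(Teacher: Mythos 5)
Your proposal is correct and follows essentially the same route as the paper: solve the continuity equation along characteristics to get positivity, moment control, and the $H^s$ bound for $\rho^{n+1}$, invoke Cauchy--Lipschitz for the particle ODEs, then divide the momentum equation by the positive density and close the $H^{s+1}$ estimate for $u^{n+1}$ via commutator (Kato--Ponce type) estimates and the convolution bounds afforded by $\nabla W_F \in \calW^{1,1}$, $\nabla W_C, \phi \in H^{s+1}$, finishing with Gr\"onwall and a choice of $T^*$ depending only on $M$ and $N$. No substantive differences from the paper's argument.
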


We now present two key lemmas that yield uniform bounds on the approximate solutions.

\begin{lemma}[Estimate of $\rho^n$]\label{lem_rho}
Let $T>0$. For a given $u^n \in L^\infty(0,T; H^{s+1}(\Omega))$ with
\[
\|u^n\|_{L^\infty(0,T; H^{s+1})} \le M,
\]
there exists a constant $T_1 \in (0,T]$ such that
\[
\int_{\Omega} \rho^{n+1}\,\dx = \int_{\Omega} \rho_0\,\dx, \ \forall \, n \in \N,  \quad \sup_{n \in \N}\int_{\Omega} \rho^{n+1} |x|^2\,\dx < \infty,
\quad
\rho^{n+1}(t,x) > 0 \quad \forall \, (t,x) \in [0,T_1]\times \Omega,
\]
and
\[
\sup_{0\le t\le T_1} \|\rho^{n+1}(t)\|_{H^s} \le M.
\]
\end{lemma}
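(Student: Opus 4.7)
The plan is to exploit that the equation for $\rho^{n+1}$,
\[
\partial_t \rho^{n+1} + u^n\cdot\nabla \rho^{n+1} + \rho^{n+1}\nabla\cdot u^n = 0, \qquad \rho^{n+1}|_{t=0} = \rho_0,
\]
is linear in $\rho^{n+1}$ with a prescribed $H^{s+1}$ coefficient $u^n$. Since $s>d/2+1$, Sobolev embedding gives $\|u^n\|_{W^{1,\infty}}\le C_{\mathrm{emb}}\|u^n\|_{H^{s+1}} \le C_{\mathrm{emb}} M$ uniformly on $[0,T]$. I would first construct the classical flow $X^n(t;x)$ solving $\partial_t X^n(t;x) = u^n(t,X^n(t;x))$ with $X^n(0;x)=x$; this is a $C^1$-diffeomorphism of $\Omega$ on $[0,T]$. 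The representation formula
\[
\rho^{n+1}(t, X^n(t;x)) = \rho_0(x)\exp\left(-\int_0^t (\nabla\cdot u^n)(s, X^n(s;x))\,\ds\right)
\]
then delivers existence, uniqueness, and the strict positivity $\rho^{n+1}(t,x)>0$ on $[0,T]\times\Omega$ from $\rho_0>0$.

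Mass conservation $\int_\Omega \rho^{n+1}\dx = \int_\Omega \rho_0\dx$ follows by integrating the continuity equation and using either periodicity (when $\Omega=\T^d$) or the spatial decay built into the Sobolev framework (when $\Omega=\R^d$). For the second-moment bound, I would multiply by $|x|^2$, integrate by parts, and use Young's inequality to obtain
\[
\ddt\int_\Omega |x|^2\rho^{n+1}\dx \le \int_\Omega |x|^2\rho^{n+1}\dx + \|u^n\|_{L^\infty}^2\|\rho_0\|_{L^1},
\]
and close with Gr\"onwall's lemma, as in Remark \ref{rmk_mom}. Since $\|u^n\|_{L^\infty}^2\le C_{\mathrm{emb}}^2 M^2$ and $\rho_0\in L^1_2(\Omega)$, the resulting bound is independent of $n$.

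The heart of the proof is the $H^s$ estimate. For each multi-index $|\alpha|\le s$, I would apply $\partial^\alpha$ to the equation, pair with $\partial^\alpha \rho^{n+1}$, and integrate. The transport term, after integration by parts, produces a contribution bounded by $\|\nabla\cdot u^n\|_{L^\infty}\|\partial^\alpha\rho^{n+1}\|_{L^2}^2$ together with a Kato--Ponce commutator $[u^n\cdot\nabla,\partial^\alpha]\rho^{n+1}$, while the reaction term $\partial^\alpha(\rho^{n+1}\nabla\cdot u^n)$ is handled by the tame product estimate. Using $s>d/2+1$ to convert every $L^\infty$-type norm into an $H^s$ or $H^{s+1}$ norm via embedding, I arrive at
\[
\ddt\|\rho^{n+1}\|_{H^s}^2 \le C_0 \|u^n\|_{H^{s+1}}\|\rho^{n+1}\|_{H^s}^2 \le C_0 M\|\rho^{n+1}\|_{H^s}^2,
\]
with $C_0$ independent of $n$. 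Gr\"onwall then yields $\|\rho^{n+1}(t)\|_{H^s}\le N e^{C_0 M t/2}$, so choosing $T_1\in(0,T]$ so small that $N e^{C_0 M T_1/2}\le M$ produces the desired bound; the independence of $C_0$, $N$, $M$ from $n$ makes the same $T_1$ work uniformly across iterates.

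The main obstacle will be the careful bookkeeping in the $H^s$ commutator estimate, ensuring that every $u^n$ factor on the right-hand side can be absorbed into a single $\|u^n\|_{H^{s+1}}$ (the quantity controlled by $M$) without the appearance of higher-order norms of $u^n$. This requires a systematic use of the Moser/Kato--Ponce commutator and tame product estimates, together with the embedding $H^{s+1}\hookrightarrow W^{1,\infty}$. Once this is in place, the uniform-in-$n$ choice of $T_1$ from the Gr\"onwall step is automatic.
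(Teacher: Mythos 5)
Your proposal is correct and follows essentially the same route as the paper: characteristics with the exponential representation formula for positivity, direct integration for mass conservation, the Young--Gr\"onwall argument for the second moment, and a standard $H^s$ energy estimate giving $\ddt\|\rho^{n+1}\|_{H^s}^2 \le CM\|\rho^{n+1}\|_{H^s}^2$, followed by choosing $T_1$ small uniformly in $n$. The only difference is cosmetic: where the paper cites a reference for the $H^s$ transport estimate, you sketch the Kato--Ponce/tame-product commutator bookkeeping explicitly.
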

\begin{proof}It is clear from the continuity equation that
\[
\ddt\int_{\Omega} \rho^{n+1}\,\dx = 0,
\]
and thus the first assertion is obtained. 

Define the characteristic curve $\eta^{n+1}(s) = \eta^{n+1}(s;t,x)$ by
\[
\dds\eta^{n+1}(s) = u^n\bigl(s, \eta^{n+1}(s)\bigr), \quad \eta^{n+1}(t)=x, \quad s\in [0,T].
\]
Along this characteristic, the transport equation for $\rho^{n+1}$ implies that
\[
\rho^{n+1}(t,x) = \rho_0\bigl(\eta^{n+1}(0)\bigr) \exp\!\Bigl(-\int_0^t (\nabla \cdot u^n)\bigl(s, \eta^{n+1}(s)\bigr)\, \ds\Bigr).
\]
Since the Sobolev embedding (with $s > d/2+1$) guarantees that
\[
\|\nabla u^n\|_{L^\infty} \le C \|u^n\|_{H^{s+1}} \le C M,
\]
we deduce that
\[
\rho^{n+1}(t,x) \ge \rho_0\bigl(\eta^{n+1}(0)\bigr)\exp(-CMT) > 0,
\]
for $t\in [0,T_1]$, provided $T_1>0$ is chosen small enough.

We also estimate
\begin{align*}
\frac12\ddt\int_{\Omega} \rho^{n+1} |x|^2 &= \int_{\Omega} \rho^{n+1} x \cdot u^n\,\dx \cr
&\leq \frac12\int_{\Omega} \rho^{n+1} |x|^2\,\dx + \frac12 \int_{\Omega} \rho^{n+1} |u^n|^2\,\dx \cr
&\leq \frac12\int_{\Omega} \rho^{n+1} |x|^2\,\dx + M \int_{\Omega} \rho_0\,\dx,
\end{align*}
where we used the positivity of $\rho^{n+1}$ and the mass conservation. Applying Gr\"onwall's lemma concludes the desired result.

A standard energy estimate (see, e.g., \cite[Appendix A]{CK16}) shows that
\[
\ddt \|\rho^{n+1}\|_{H^s}^2 \le C M\, \|\rho^{n+1}\|_{H^s}^2,
\]
so that by Gr\"onwall's lemma,
\[
\sup_{0\le t\le T_1}\|\rho^{n+1}(t)\|_{H^s} \le \|\rho_0\|_{H^s}\, e^{CMT_1}.
\]
Choosing $T_1>0$ small enough to ensure that $\|\rho_0\|_{H^s}\, e^{CMT_1} \le M$ completes the proof.
\end{proof}

\begin{lemma}[Estimate of $(\bar x^{n+1}, \bar v^{n+1})$]\label{lem_par} Let the assumption of Lemma \ref{lem_rho} hold. Then there exists a unique classical solution $(\bar x^{n+1}, \bar v^{n+1})$ to the particle system on the time interval $[0,T_1]$ in \eqref{fluid_app}. Moreover, we have
\[
\sup_{0 \leq t \leq T_1} \left(| \bar x_i^n(t)|^2 + | \bar v_i^n(t)|^2 \right)  <  \left(| \bar x_i(0)|^2 + | \bar v_i(0)|^2 + |\bar{x}_{d_i}|^2 \right) + C
\]
for some $C>0$ independent of $N$ and $n$.
%In particular,
%\[
%\sup_{n \in \N} \sup_{0 \leq t \leq T_1} \|\bar\mu^{N, n}_L(t)\|_{\calP_2} <  \|\bar\mu^{N, n}_L(0)\|_{\calP_2} + C,
%\]
%where $C>0$ is independent of $N$.
\end{lemma}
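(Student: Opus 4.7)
The plan is to treat the particle system in \eqref{fluid_app} as a standard Cauchy problem for an ODE in $\mathbb{R}^{2Nd}$ with a time-dependent but Lipschitz vector field, and then extract the quantitative a priori bound through an elementary energy estimate. First, observe that once $\rho^{n+1}$ has been constructed by Lemma \ref{lem_rho}, the two sources of external dependence in the $(\bar x^{n+1},\bar v^{n+1})$ equations are the fixed targets $\bar x_{d_i}$ and the time-dependent center of mass $\langle x\rangle_{\rho^{n+1}}(t)$. The latter is well defined and bounded on $[0,T_1]$: by Cauchy--Schwarz together with the mass conservation and the $L^1_2$ estimate from Lemma \ref{lem_rho}, one has
\[
|\langle x\rangle_{\rho^{n+1}}(t)|\le \left(\int_{\Omega}\rho^{n+1}\,\dx\right)^{1/2}\left(\int_{\Omega}|x|^2\rho^{n+1}\,\dx\right)^{1/2}\le C,
\]
uniformly in $n$ and $t\in[0,T_1]$.

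Next I would verify the Lipschitz property of the vector field driving the particle system. The linear terms $-(1-\alpha)(\bar x_i-\bar x_{d_i})$, $-\alpha(\bar x_i-\langle x\rangle_{\rho^{n+1}})$ and $-\bar v_i$ are clearly globally Lipschitz in $(\bar x,\bar v)$. The nonlocal contribution $\frac{1}{N}\sum_j\nabla W_L(\bar x_j-\bar x_i)$ is Lipschitz in the positions because assumption \textbf{(A2)} gives $\nabla W_L\in\mathcal{W}^{1,\infty}(\Omega)$; its time dependence enters only through continuity of the trajectories. Applying the Picard--Lindel\"of theorem on $[0,T_1]$ then yields a unique $C^1$ solution $\{(\bar x_i^{n+1},\bar v_i^{n+1})\}_{i=1}^N$ of the particle ODEs.

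For the a priori bound I would compute
\[
\ddt\left(|\bar x_i^{n+1}|^2+|\bar v_i^{n+1}|^2\right)=2\bar x_i^{n+1}\cdot\bar v_i^{n+1}+2\bar v_i^{n+1}\cdot\bar F_i^{n+1},
\]
where $\bar F_i^{n+1}$ denotes the right-hand side of the velocity equation. Using Young's inequality on the cross term $2\bar x_i^{n+1}\cdot\bar v_i^{n+1}$, bounding the two linear restoring forces by $|\bar x_i^{n+1}|+|\bar x_{d_i}|+|\langle x\rangle_{\rho^{n+1}}|$, absorbing the dissipative contribution $-2|\bar v_i^{n+1}|^2$, and using
\[
\left|\frac{1}{N}\sum_{j=1}^N\nabla W_L(\bar x_j^{n+1}-\bar x_i^{n+1})\right|\le \|\nabla W_L\|_{L^\infty},
\]
which follows from \textbf{(A2)}, together with the uniform bound on $\langle x\rangle_{\rho^{n+1}}$ obtained above, I would arrive at
\[
\ddt\left(|\bar x_i^{n+1}|^2+|\bar v_i^{n+1}|^2\right)\le C_1\left(|\bar x_i^{n+1}|^2+|\bar v_i^{n+1}|^2\right)+C_2\left(1+|\bar x_{d_i}|^2\right),
\]
with $C_1,C_2$ independent of $n$ and $N$. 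Gr\"onwall's lemma applied on $[0,T_1]$ then produces the stated estimate after absorbing the $C_2(1+|\bar x_{d_i}|^2)T_1$ contribution into the additive constant $C$.

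The step I expect to require the most care is verifying that all constants appearing in the estimate are genuinely independent of both $n$ and $N$. The uniform-in-$n$ character of $|\langle x\rangle_{\rho^{n+1}}|$ is guaranteed by Lemma \ref{lem_rho} (the $L^1_2$ bound propagates with constants depending only on $M$ and $T_1$), while the uniform-in-$N$ bound on the nonlocal interaction is automatic from the $L^\infty$ bound on $\nabla W_L$ thanks to the $1/N$ averaging. Once these two points are secured, the rest of the argument is a textbook Gr\"onwall estimate.
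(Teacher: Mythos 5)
Your existence argument and the bound on $\langle x\rangle_{\rho^{n+1}}$ match the paper's proof (Cauchy--Lipschitz plus the second-moment bound from Lemma \ref{lem_rho}), but the energy estimate as you run it does not deliver the inequality actually claimed in the lemma. The stated bound has coefficient exactly $1$ in front of $|\bar x_i(0)|^2+|\bar v_i(0)|^2+|\bar x_{d_i}|^2$, with an additive constant $C$ independent of $N$ and $n$ (hence, in particular, not carrying any $i$-dependence). Your route — Young on the cross term $2\bar x_i\cdot\bar v_i$, crude bounds on the restoring forces, then Gr\"onwall — produces
\[
\sup_{0\le t\le T_1}\left(|\bar x_i^{n+1}(t)|^2+|\bar v_i^{n+1}(t)|^2\right)\le e^{C_1T_1}\left(|\bar x_i(0)|^2+|\bar v_i(0)|^2\right)+\tfrac{C_2}{C_1}\left(e^{C_1T_1}-1\right)\left(1+|\bar x_{d_i}|^2\right),
\]
i.e.\ an exponential prefactor on the initial data and a coefficient on $|\bar x_{d_i}|^2$ that can exceed $1$. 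Your closing remark that the leftover contribution can be ``absorbed into the additive constant $C$'' is not admissible: that would make $C$ depend on $\bar x_{d_i}$ and on the initial data of the $i$-th particle, i.e.\ on $i$ and implicitly on $N$, which the lemma excludes (the target and initial data appear explicitly in the bound precisely so that $C$ need not depend on them). The resulting weaker estimate would still suffice for the way the lemma is used later, but it is not the statement being proved.

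The missing idea is a structural cancellation that removes the need for Gr\"onwall altogether. Since $(1-\alpha)+\alpha=1$, the two restoring forces combine to $-\bar x_i^{n+1}$ plus bounded terms, so in
\[
\frac12\ddt\left(|\bar x_i^{n+1}|^2+|\bar v_i^{n+1}|^2\right)=\bar x_i^{n+1}\cdot\bar v_i^{n+1}+\bar v_i^{n+1}\cdot\ddt\bar v_i^{n+1}
\]
the term $\bar x_i^{n+1}\cdot\bar v_i^{n+1}$ cancels exactly against $-\bar x_i^{n+1}\cdot\bar v_i^{n+1}$ coming from the velocity equation, leaving only
\[
(1-\alpha)\,\bar x_{d_i}\cdot\bar v_i^{n+1}+\alpha\,\bar v_i^{n+1}\cdot\langle x\rangle_{\rho^{n+1}}-|\bar v_i^{n+1}|^2-\bar v_i^{n+1}\cdot\nabla W_L*\bar\varrho_L^{N,n+1}(\bar x_i^{n+1}).
\]
Applying Young to the three cross terms, the resulting $\tfrac{1-\alpha}{2}+\tfrac{\alpha}{2}+\tfrac12=1$ multiples of $|\bar v_i^{n+1}|^2$ are absorbed entirely by the damping $-|\bar v_i^{n+1}|^2$, so the right-hand side is bounded by $|\bar x_{d_i}|^2+\int_\Omega\rho^{n+1}|x|^2\,\dx+\|\nabla W_L\|_{L^\infty}^2$, which is uniform in $n$ and $N$ by Lemma \ref{lem_rho}. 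A direct integration in time over $[0,T_1]$ then gives the bound with coefficient $1$ on the initial data and on $|\bar x_{d_i}|^2$, exactly as stated.
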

\begin{proof}It follows from Lemma \ref{lem_rho} that $\langle x \rangle_{\rho^{n+1}}$ is well-defined on the time interval $[0,T_1]$. Thus, the classical Cauchy--Lipschitz theory guarantees the existence of unique classical solutions. 

We also estimate
\begin{align*}
 \frac12\ddt \left(| \bar x_i^{n+1}(t)|^2 + | \bar v_i^{n+1}(t)|^2 \right) 
&  = (1-\alpha) \bar{x}_{d_i} \cdot  \bar v_i^{n+1}  + \alpha  \bar v_i^{n+1} \cdot \langle x \rangle_{\rho^{n+1}} -   |\bar v_i^{n+1}|^2 -   \bar v_i^{n+1} \cdot  \nabla W_L *  \bar\varrho_{L}^N(\bar{x}_i^{n+1})\cr
&  \leq  \frac{1-\alpha}{2}|\bar{x}_{d_i}|^2 + \frac{\alpha}2 | \langle x \rangle_{\rho^{n+1}}|^2 + \frac12  |\nabla W_L *  \bar\varrho_{L}^N(\bar{x}_i^{n+1})|^2 \cr
&  \leq |\bar{x}_{d_i}|^2 + \int_{\Omega} \rho^{n+1}|x|^2\,\dx + \|\nabla W_L\|_{L^\infty}^2
\end{align*}
due to $\alpha \in [0,1]$. Integrating the above inequality with respect to $t$ concludes the desired result.
\end{proof}

\begin{lemma}[Estimate of $u^n$]\label{lem_u}
Let $T>0$. For a given $u^n \in L^\infty(0,T; H^{s+1}(\Omega))$ with
\[
\|u^n\|_{L^\infty(0,T; H^{s+1})} \le M,
\]
there exists a constant $T_2 \in (0,T_1]$ such that
\[
\sup_{0\le t\le T_2} \|u^{n+1}(t)\|_{H^{s+1}} \le M.
\]
\end{lemma}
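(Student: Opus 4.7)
The plan is to derive a closed $H^{s+1}$ bound for $u^{n+1}$ via an energy estimate on the linearized equation, and then to invoke standard linear-transport theory for existence and uniqueness. Since Lemma \ref{lem_rho} guarantees $\rho^{n+1}(t,x) > 0$ on $[0,T_1]\times\Omega$, I would first divide the momentum equation in \eqref{fluid_app} by $\rho^{n+1}$ to obtain the cleaner form
\[
\partial_t u^{n+1} + u^n \cdot \nabla u^{n+1} = -\nabla W_F * \rho^{n+1} - \nabla W_C * \bar\varrho_L^{N,n+1} + \iint_{\Omega\times\R^d} \phi(x-y)(w - u^{n+1}(x))\,\bar\mu_L^{N,n+1}(\dy\dw).
\]
This is a linear (inhomogeneous) transport equation for $u^{n+1}$ with given coefficients $u^n \in L^\infty(0,T; H^{s+1})$, $\rho^{n+1} \in L^\infty(0,T_1; H^s)$, and the particle data from Lemma \ref{lem_par}.

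Next, for each multi-index $|\alpha| \leq s+1$, I would apply $\partial^\alpha$, test against $\partial^\alpha u^{n+1}$, and integrate by parts in the convection term, producing $\frac12\int (\nabla\cdot u^n) |\partial^\alpha u^{n+1}|^2\,\dx$ plus the commutator $[\partial^\alpha, u^n\cdot\nabla]u^{n+1}$. The Kato--Ponce commutator estimate, together with the Sobolev embedding $H^{s+1} \hookrightarrow W^{1,\infty}$ (since $s > d/2+1$) and $\|u^n\|_{H^{s+1}} \leq M$, bounds this commutator by $C(M)\|u^{n+1}\|_{H^{s+1}}$. For the source terms I would argue as follows:
\begin{itemize}
\item[(i)] Young's inequality and $\nabla W_F \in \calW^{1,1}$ give $\|\nabla W_F * \rho^{n+1}\|_{H^{s+1}} \leq \|\nabla W_F\|_{\calW^{1,1}}\|\rho^{n+1}\|_{H^s} \leq C\|\nabla W_F\|_{\calW^{1,1}} M$.
\item[(ii)] Since $\nabla W_C * \bar\varrho_L^{N,n+1} = \frac1N \sum_i \nabla W_C(\cdot - \bar x_i^{n+1})$ is a finite sum of translates, translation-invariance of $H^{s+1}$ yields an $H^{s+1}$-bound of at most $\|\nabla W_C\|_{H^{s+1}}$.
\item[(iii)] Splitting the alignment term as $\frac{1}{N}\sum_i \phi(\cdot-\bar x_i^{n+1})\bar v_i^{n+1} - u^{n+1}(\cdot)\cdot\frac{1}{N}\sum_i\phi(\cdot-\bar x_i^{n+1})$, the first piece is bounded in $H^{s+1}$ by $\|\phi\|_{H^{s+1}}\max_i|\bar v_i^{n+1}|$, which is controlled by Lemma \ref{lem_par}; the second is a product of $u^{n+1}$ with a smooth coefficient of $H^{s+1}$-norm $\leq \|\phi\|_{H^{s+1}}$, so the Moser product estimate yields a bound of the form $C\|\phi\|_{H^{s+1}}\|u^{n+1}\|_{H^{s+1}}$.
\end{itemize}

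Combining these contributions and summing over $|\alpha|\leq s+1$ produces an inequality of the form
\[
\ddt \|u^{n+1}(t)\|_{H^{s+1}}^2 \leq C_1(M,N)\|u^{n+1}(t)\|_{H^{s+1}}^2 + C_2(M,N),
\]
where the constants depend on $M$, $N$, $\|\nabla W_F\|_{\calW^{1,1}}$, $\|\nabla W_C\|_{H^{s+1}}$, $\|\phi\|_{H^{s+1}}$, and the bounds from Lemmas \ref{lem_rho}--\ref{lem_par}, but are independent of the iteration index $n$. Gr\"onwall's lemma then yields
\[
\|u^{n+1}(t)\|_{H^{s+1}}^2 \leq (N^2 + C_2 t) e^{C_1 t},
\]
and choosing $T_2 \in (0,T_1]$ sufficiently small forces the right-hand side to stay below $M^2$. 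Existence and uniqueness of a regular solution $u^{n+1} \in C([0,T_2]; H^{s+1})$ then follows from standard theory for linear symmetric hyperbolic systems with $C^1$ coefficients (the transport coefficient $u^n$ is $C^1$ by Sobolev embedding, the zero-order coefficient $\phi*\bar\varrho_L^{N,n+1}$ is $H^{s+1}$, and the source is $L^\infty_t H^{s+1}_x$). The main subtlety is the bookkeeping between terms that produce $\|u^{n+1}\|_{H^{s+1}}$ on the right-hand side (commutator and alignment-linear piece) versus genuine sources (convolution terms), but since every troublesome contribution is linear in the unknown it is absorbed by Gr\"onwall, and the uniform bound \eqref{uni_app0} follows.
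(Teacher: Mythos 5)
Your proposal is correct and follows essentially the same route as the paper's proof: divide the momentum equation by the positive density $\rho^{n+1}$, perform order-$(s+1)$ energy estimates with commutator bounds for the transport term, control the convolution sources via Young's inequality and the particle bounds from Lemma \ref{lem_par}, split the alignment term into a source piece and a piece linear in $u^{n+1}$, and close with Gr\"onwall and a small choice of $T_2 \le T_1$. The only cosmetic difference is that you also comment on solvability of the linear transport problem, which the paper handles separately in Proposition \ref{prop_app}.
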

\begin{proof}
Since $\rho^{n+1}>0$ (by Lemma \ref{lem_rho}), we can divide the momentum equation in \eqref{fluid_app} by $\rho^{n+1}$ to write
\begin{align}\label{u_eq}
\begin{aligned}
 \partial_t u^{n+1} + u^n\cdot \nabla u^{n+1}  
&  = - \nabla W_F * \rho^{n+1} - \nabla W_C * \bar\varrho_L^{N, n+1} \cr
&\quad + \iint_{\Omega \times\mathbb{R}^d} \phi(x-y) \bigl(w-u^{n+1}(x)\bigr) \bar\mu_L^{N, n+1}(\dy\dw).
\end{aligned}
\end{align}
Before deriving $H^{s+1}$ estimates for $u^{n+1}$, we note that there exists a constant $C>0$, independent of $n$ and $M$, such that
\begin{align*}
\|\nabla W_F * \rho^{n+1}\|_{H^{s+1}} &\le C\, \|\nabla W_F\|_{W^{1,1}}\, \|\rho^{n+1}\|_{H^s} \le C M,\\[1mm]
\|\nabla W_C *  \bar\varrho_L^{N, n+1}\|_{H^{s+1}} &\le C\, \|\nabla W_C\|_{H^{s+1}}\, \| \bar\varrho_L^{N, n+1}\|_{\mathcal{P}} \le C,\\[1mm]
\|\phi *  \langle v \rangle_{\bar\mu^{N, n+1}_L}\|_{H^{s+1}} &\le C\, \|\phi\|_{H^{s+1}}\, \|\bar\mu_L^{N, n+1}\|_{\mathcal{P}_2} \le C,
\end{align*}
and
\begin{align*}
 \| u^{n+1}\, (\phi *  \bar\varrho^{N, n+1}_L)\|_{H^{s+1}}  
&  \le C\Bigl(\|u^{n+1}\|_{H^{s+1}}\, \|\phi * \bar\varrho^{N, n+1}_L\|_{L^\infty} + \|u^{n+1}\|_{L^\infty}\, \|\phi *  \bar\varrho^{N, n+1}_L\|_{H^{s+1}}\Bigr)\\[1mm] 
&  \le C\, \|u^{n+1}\|_{H^{s+1}}\Bigl(\|\phi\|_{L^\infty}\, \|\bar\varrho_L^{N, n+1}\|_{\mathcal{P}} + \|\phi\|_{H^{s+1}}\, \|\bar\varrho_L^{N, n+1}\|_{\mathcal{P}}\Bigr)\\[1mm]
&  \le C\, \|u^{n+1}\|_{H^{s+1}},
\end{align*}
where we have set
\[
\langle v \rangle_{\bar\mu^{N, n+1}_L} := \int_{\mathbb{R}^d} v\, \bar\mu^{N, n+1}_L(\dv),
\]
and used the bound from Lemma \ref{lem_rho} along with the inequality
\[
\iiint_{\Omega\times\mathbb{R}^d} |v|\, \bar\mu^{N, n+1}_L(\dx\dv) \le \|\bar\mu^{N, n+1}_L\|_{\mathcal{P}_2}\,\|\bar\mu^{N, n+1}_L\|_{\mathcal{P}} = \|\bar\mu^{N, n+1}_L\|_{\mathcal{P}_2}.
\]

To obtain $H^{s+1}$ estimates for $u^{n+1}$, we differentiate \eqref{u_eq} $k$ times (with $0\le k\le s+1$) and take the $L^2$ inner product with $\nabla^k u^{n+1}$. A standard commutator estimate then shows that
$$\begin{aligned}
 \frac12\ddt\|\nabla^k u^{n+1}\|_{L^2}^2  
&  = \frac12\int_{\Omega} (\nabla \cdot  u^n)|\nabla^k u^{n+1}|^2\,\dx - \int_\Omega \left(\nabla^k ( u^n \cdot \nabla u^{n+1}) - u^n \cdot \nabla^{k+1} u^{n+1}\right) \cdot \nabla^k u^{n+1}\,\dx\cr
&\quad - \int_\Omega \nabla^k \left(\nabla W_F * \rho^{n+1}  + \nabla W_C *  \bar\varrho_L^{N, n+1} \right)\cdot\nabla^k u^{n+1}\,\dx\cr
&\quad + \int_\Omega \nabla^k \left(\iint_{\Omega \times \R^d} \phi(x-y) (w-u^{n+1}(x)) \bar\mu_L^{N, n+1}(\dy\dw) \right) \cdot \nabla^k u^{n+1}\,\dx\cr
& =: \sum_{i=1}^4 I_i,
\end{aligned}$$
where the terms on the right-hand side satisfy
\begin{align*}
I_1 &\le \|\nabla u^n\|_{L^\infty}\,\|\nabla^k u^{n+1}\|_{L^2}^2 \le C M\, \|\nabla^k u^{n+1}\|_{L^2}^2,\\[1mm]
I_2 &\le C\Bigl(\|\nabla^k u^n\|_{L^2}\,\|\nabla u^{n+1}\|_{L^\infty} + \|\nabla u^n\|_{L^\infty}\,\|\nabla^k u^{n+1}\|_{L^2}\Bigr)\|\nabla^k u^{n+1}\|_{L^2}\\[1mm]
&\le C M\, \Bigl(\|u^{n+1}\|_{H^{s-1}} + \|\nabla^k u^{n+1}\|_{L^2}\Bigr)\|\nabla^k u^{n+1}\|_{L^2},\\[1mm]
I_3 &\le \|\nabla^k \bigl(\nabla W_F * \rho^{n+1} + \nabla W_C * \bar\varrho_L^{N, n+1}\bigr)\|_{L^2}\,\|\nabla^k u^{n+1}\|_{L^2} \le C(M+1)\,\|\nabla^k u^{n+1}\|_{L^2},\\[1mm]
I_4 &\le C\Bigl(\|\nabla^k (\phi *  \langle v \rangle_{\bar\mu^{N, n+1}_L})\|_{L^2} + \|\nabla^k\bigl(u^{n+1}\, (\phi *   \bar\varrho^{N, n+1}_L)\bigr)\|_{L^2}\Bigr)\|\nabla^k u^{n+1}\|_{L^2}\\[1mm]
&\le C\Bigl(1 + \|u^{n+1}\|_{H^{s+1}}\Bigr)\|\nabla^k u^{n+1}\|_{L^2}.
\end{align*}
Summing over $0\le k\le s+1$, we obtain the differential inequality
\[
\ddt\|u^{n+1}\|_{H^{s+1}} \le C(1+M) \Bigl(1+\|u^{n+1}\|_{H^{s+1}}\Bigr).
\]
An application of Gr\"onwall's lemma then gives
\[
\|u^{n+1}(t)\|_{H^{s+1}} \le \|u_0\|_{H^{s+1}}\, e^{C(1+M)T_1} + \Bigl(e^{C(1+M)T_1}-1\Bigr).
\]
By choosing $T_2 \le T_1$ sufficiently small so that the right-hand side is bounded by $M$, we conclude that
\[
\sup_{0\le t\le T_2} \|u^{n+1}(t)\|_{H^{s+1}} \le M.
\]
This completes the proof.
\end{proof}

\begin{proof}[Proof of Proposition \ref{prop_app}]
The existence and uniqueness of regular solutions to the approximate system \eqref{fluid_app} follow from standard theory since the potentials and weight functions are smooth. The uniform-in-$n$ estimates stated in \eqref{uni_app} are a direct consequence of Lemmas \ref{lem_rho} and \ref{lem_u}. Therefore, the proposition is proved.
\end{proof}

%%%%%%%%%%%%%%%%%%%%%%%%%%%%%%%%%%%%%%%%%%%%%%%%%%%%%%%%%%%%%%%%%%%%%%%%%
%
%
%
%
%
%%%%%%%%%%%%%%%%%%%%%%%%%%%%%%%%%%%%%%%%%%%%%%%%%%%%%%%%%%%%%%%%%%%%%%%%%
\subsubsection{Proof of Theorem \ref{thm_ex2}}\label{ssec_thm_ex2}
We now complete the proof of Theorem \ref{thm_ex2} by showing that the approximate solutions constructed in Proposition \ref{prop_app} converge to a unique strong solution of \eqref{fluid}.

Subtracting the equations in the iterative scheme \eqref{fluid_app} for consecutive iterates, we obtain
$$\begin{aligned}
    &\ddt (\bar{x}_i^{n+1} -\bar{x}_i^{n} ) = \bar{v}_i^{n+1} - \bar{v}_i^{n}, \quad i = 1,\dots, N, \quad t > 0,\\[1mm]
    &\ddt (\bar{v}_i^{n+1} - \bar{v}_i^{n}) = -(\bar{x}_i^{n+1} - \bar{x}_i^{n})  + \alpha( \langle x \rangle_{\rho^{n+1}} - \langle x \rangle_{\rho^{n}} )- (\bar{v}_i^{n+1} - \bar{v}_i^{n})\cr
    &\hspace{2.5cm} - \nabla W_L *  ( \bar\varrho_{L}^{N, n+1} - \bar\varrho_{L}^{N, n})(\bar{x}_i^{n+1}) - \nabla W_L * (   \bar\varrho_{L}^{N, n}(\bar x_i^{n+1}) -  \bar\varrho_{L}^{N, n}(\bar x_i^{n})) ,\\[1mm]
&\partial_t (\rho^{n+1} - \rho^n) + (u^n - u^{n-1})\cdot \nabla_x \rho^{n+1} + u^{n-1} \cdot \nabla_x (\rho^{n+1} - \rho^n)\cr
&\quad + (\rho^{n+1} - \rho^n)\nabla_x \cdot u^n + \rho^n \nabla_x \cdot (u^n - u^{n-1}) = 0,\cr
&\partial_t (u^{n+1} - u^n) + (u^n - u^{n-1})\cdot\nabla_x u^{n+1} + u^{n-1} \cdot \nabla_x (u^{n+1} - u^n)\cr
&\quad = -\nabla W_F * (\rho^{n+1} - \rho^n) - \nabla W_C *   (\bar\varrho^{N, n+1}_L - \bar\varrho^{N, n}_L) + \phi *   (\langle v \rangle_{\bar\mu^{N, n+1}_L} - \langle v \rangle_{\bar\mu^{N, n}_L}) \cr
&\qquad - (u^{n+1} - u^n) \phi *  \bar\varrho^{N, n+1}_L - u^n \phi *   (\bar\varrho^{N, n+1}_L - \bar\varrho^{N, n}_L).
\end{aligned}$$
Similarly as in Lemma \ref{lem_par}, we find 
\begin{align*}
&\frac12\ddt\left(|\bar{x}_i^{n+1} -\bar{x}_i^{n}|^2 + |\bar{v}_i^{n+1} - \bar{v}_i^{n}|^2 \right) \cr
&\quad \leq |\langle x \rangle_{\rho^{n+1}} - \langle x \rangle_{\rho^{n}}|^2 + \|\nabla W_L\|_{\rm Lip}\left( |\bar{x}_i^{n+1} -\bar{x}_i^{n}|^2 + \frac1N \sum_{j=1}^N |\bar{x}_j^{n+1} -\bar{x}_j^{n}|^2\right).
\end{align*}
On the other hand, 
\[
\ddt\int_{\Omega} (\rho^{n+1} - \rho^n) x\,\dx = \int_{\Omega} \rho^{n+1}(u^n - u^{n-1})\,\dx + \int_{\Omega} (\rho^{n+1} - \rho^n) u^{n-1}\,\dx,
\]
and thus
\begin{align*}
|(\langle x \rangle_{\rho^{n+1}} - \langle x \rangle_{\rho^{n}})(t)| &\leq \|\rho^{n+1}\|_{L^\infty(0,T^*; L^2)} \int_0^t \|(u^n - u^{n-1})(\tau)\|_{L^2}\,\dtau \cr
&\quad + \|u^{n-1}\|_{L^\infty(0,T^*;L^2)} \int_0^t \|(\rho^{n+1} - \rho^n)(\tau)\|_{L^2}\,\dtau.
\end{align*}
This yields
\[
 |(\langle x \rangle_{\rho^{n+1}} - \langle x \rangle_{\rho^{n}})(t)|^2 \leq C\int_0^t \left(\|(u^n - u^{n-1})(\tau)\|_{L^2}^2 + \|(\rho^{n+1} - \rho^n)(\tau)\|_{L^2}^2\right)\dtau.
\]
Hence we have
\begin{align*}
&\frac1{N}\sum_{i=1}^N\left(|(\bar{x}_i^{n+1} -\bar{x}_i^{n})(t)|^2 + |(\bar{v}_i^{n+1} - \bar{v}_i^{n})(t)|^2 \right)   \leq C\int_0^t \left(\|(u^n - u^{n-1})(\tau)\|_{L^2}^2 + \|(\rho^{n+1} - \rho^n)(\tau)\|_{L^2}^2\right)\dtau,
\end{align*}
where $C>0$ is independent of $n$ and $N$.

Note that for $k=0,1$
\[
|\nabla^k \phi *  (\bar\varrho^{N, n+1}_L - \bar\varrho^{N, n}_L)|^2 \leq \|\nabla^k \phi\|_{\rm Lip}^2 \frac1N \sum_{i=1}^N|(\bar{x}_i^{n+1} -\bar{x}_i^{n})(t)|^2
\]
and
\begin{align*}
&|\nabla^k \phi *   (\langle v \rangle_{\bar\mu^{N, n+1}_L} - \langle v \rangle_{\bar\mu^{N, n}_L})|^2 \cr
&\quad \leq \|\nabla^k \phi\|_{\rm Lip}^2 \frac1N \sum_{i=1}^N|(\bar{x}_i^{n+1} -\bar{x}_i^{n})(t)|^2 |\bar v_i^{n+1}|^2 + \|\nabla^k\phi\|_{L^\infty}^2 \frac1N \sum_{i=1}^N  |(\bar{v}_i^{n+1} - \bar{v}_i^{n})(t)|^2 \cr
&\quad \leq C \frac1{N}\sum_{i=1}^N\left(|(\bar{x}_i^{n+1} -\bar{x}_i^{n})(t)|^2 + |(\bar{v}_i^{n+1} - \bar{v}_i^{n})(t)|^2 \right) 
\end{align*}
due to Lemma \ref{lem_par}. Then straightforward computations together with \eqref{uni_app0} give
\[
\|(\rho^{n+1} - \rho^n)(t)\|_{L^2}^2 \leq C\int_0^t \left(\|(\rho^{n+1} - \rho^n)(\tau)\|_{L^2}^2 + \|(u^n - u^{n-1})(\tau)\|_{H^1}^2 \right)\dtau
\]
and
\begin{align*}
&\|(u^{n+1} - u^n)(t)\|_{H^1}^2 \leq C\int_0^t \left(\|(u^{n+1} - u^n)(\tau)\|_{H^1}^2 + \|(\rho^{n+1} - \rho^n)(\tau)\|_{L^2}^2 + \|(u^n - u^{n-1})(\tau)\|_{H^1}^2 \right)\dtau.
\end{align*}
Combining the above two differential inequalities and applying Gr\"onwall's lemma, we obtain
\[
\left( \|\rho^{n+1} - \rho^n\|_{L^2}^2 + \|u^{n+1} - u^n\|_{H^1}^2\right) \leq \frac{Ct^n}{n!}
\]
and thus
\[
\frac1{N}\sum_{i=1}^N\left(|(\bar{x}_i^{n+1} -\bar{x}_i^{n})(t)|^2 + |(\bar{v}_i^{n+1} - \bar{v}_i^{n})(t)|^2 \right)\leq \frac{Ct^n}{n!}
\]
for $0 \leq t \leq T^*$, and this implies that 
\[
(\bar{x}_i^{n},\bar{v}_i^{n}) \mbox{ are uniformly Cauchy sequences in } C[0,T^*]\times C[0,T^*]
\]
and
\[
(\rho^n,u^n) \mbox{ are Cauchy sequences in } C([0,T^*];L^2(\Omega)) \times C([0,T^*];H^1(\Omega)).
\]
Thus we obtain limit functions $(\bar x_i, \bar v_i)$ and $(\rho, u)$ such that
\[
(\bar{x}_i^{n},\bar{v}_i^{n}) \to (\bar x_i,  \bar v_i) \quad \mbox{in} \quad C[0,T^*]\times C[0,T^*]\quad \mbox{as} \quad n \to \infty.
\]
and
\[
(\rho^n,u^n) \to (\rho,  u) \quad \mbox{in} \quad C([0,T^*];L^2(\Omega)) \times C([0,T^*];L^2(\Omega)) \quad \mbox{as} \quad n \to \infty.
\]
Then this together with \eqref{uni_app} and using Gagliardo--Nirenberg interpolation inequality yields the above convergences for $(\rho^n,u^n)$ in fact in $ C([0,T^*];H^{s-1}(\Omega)) \times C([0,T^*];H^s(\Omega))$. In order to show that limiting functions $(\rho,  u)$ are in the desired Sobolev spaces, $C([0,T^*];H^s(\Omega)) \times C([0,T^*];H^{s+1}(\Omega))$, we can use standard functional analytic arguments whose details can be found in \cite[Appendix A]{CCZ16}. For the uniqueness of solutions, let $(\rho_1,u_1)$ and $(\rho_2,u_2)$ be the strong solutions obtained above with the same initial data $(\rho_0,u_0)$. Then by using almost the same argument as above, we get
\[
\|(\rho_1 - \rho_2)(\cdot,t)\|_{L^2}^2 + \|(u_1 - u_2)(\cdot,t)\|_{H^1}^2 \leq C\int_0^t \|(\rho_1 - \rho_2)(\cdot,s)\|_{L^2}^2 + \|(u_1 - u_2)(\cdot,s)\|_{H^1}^2\,\ds,
\] 
for $t \in [0,T^*]$. This leads
\[
\|(\rho_1 - \rho_2)(\cdot,t)\|_{L^2}^2 + \|(u_1 - u_2)(\cdot,t)\|_{H^1}^2 \equiv 0 \quad \mbox{for all} \quad t \in [0,T^*],
\]
and by using the same ideas used in the part of the existence of solutions, we can show that they are indeed the same in $C([0,T^*];H^s(\Omega)) \times C([0,T^*];H^{s+1}(\Omega))$. The uniqueness of $(\bar x_i, \bar v_i)$ simply follows.

%%%%%%%%%%%%%%%%%%%%%%%%%%%%%%%%%%%%%%%%%%%%%%%%%%%%%%%%%%%%%%%%%%%%%%%%%
%
%
%
%
%
%%%%%%%%%%%%%%%%%%%%%%%%%%%%%%%%%%%%%%%%%%%%%%%%%%%%%%%%%%%%%%%%%%%%%%%%%
\subsection{Macro-macro system}\label{app_ffs}
In this part, we present the local-in-time existence and uniqueness of regular solutions to the fluid-fluid system introduced in the introduction. While the mean-field limit is formulated in a general setting allowing arbitrary interaction distributions $g \in \calP_2(\Omega)$ and domains, such generality poses significant challenges for establishing well-posedness, see Remark \ref{rmk_ff} below. Therefore, we restrict our analysis to the periodic domain $\Omega = \T^d$, and assume the interaction distribution $g$ takes the specific form of a convex combination of Dirac masses:
\begin{align*}
    g(\dxi) = \sum_{p=1}^P a_p \delta_{\xi_p}(\dxi), \quad a_p > 0, \quad \sum_{p=1}^P a_p = 1.
\end{align*}
We assume that $\xi_p \in B(0,R)$ for some $R>0$ and for all $p=1, \dots, P$, i.e.
\[
{\rm supp} (g) \subset B(0,R).
\]
 
This allows the system to be decomposed into $P$ leader subsystems indexed by $\xi_k$, each coupled with the follower subsystem. The governing equations are as follows.

For each $\xi_k$ ($1 \le k \le P$):
\begin{equation}\label{eq: macmac xi_k}
    \begin{split}
        &\p_t \rho_L(t,x,\xi_k) + \nabla_x\cdot (\rho_L u_L) (t,x,\xi_k) = 0,\\
        &\p_t (\rho_L u_L)(t,x,\xi_k) + \nabla_x\cdot (\rho_L u_L \otimes u_L) (t,x,\xi_k) \\
        &\quad = -(1-\alpha) \rho_L(t,x,\xi_k) \cdot (x-\xi_k)  - \alpha \rho_L(t,x,\xi_k) \cdot (x-\langle x \rangle_{\rho_F}) - (\rho_L u_L) (t,x,\xi_k) \\
        &\qquad - \rho_L(t,x,\xi_k) \cdot \sum_{p=1}^P a_p \int_{\T^{d}} \nabla W_L(x-y)\rho_L(y,\xi_p) \dy.
    \end{split}
\end{equation}

For the followers:
\begin{equation} \label{eq: macmac followers dirac}
\begin{split}
    &\p_t (\rho_F)(t,x) + \nabla_x\cdot (\rho_F u_F)(t,x) = 0,\\
        &\p_t (\rho_F u_F)(t,x) + \nabla_x \cdot (\rho_L u_L \otimes u_L)(t,x) \\
        &\quad = -(\rho_F \nabla W_F *_x \rho_F)(t,x) - \rho_F(t,x) \sum_{p=1}^P a_p \int_{\T^{d}} \nabla W_C(x-y)\rho_L(y,\xi_p)\dy  \\
        &\qquad +\rho_F(t,x)\sum_{p=1}^P a_p \int_{\T^{d}} \phi(x-y)(u_L(y,\xi_p) - u_F(x)) \rho_L(y,\xi_p) \dy .
    \end{split}
\end{equation}

We now clarify the notion of regular solutions used in the theorem below.
\begin{definition}\label{def:reg-fluid-fluid}
Let $s > d/2 + 1$ and $T > 0$. A collection $\big(\{(\rho_L(\cdot,\xi_k), u_L(\cdot,\xi_k))\}_{k=1}^P, \rho_F, u_F\big)$ is called a \emph{regular solution} on $[0,T]$ if the following hold:
\begin{itemize}
    \item[(i)] For each $1 \le k \le P$, the leader components satisfy
    \[ (\rho_L(\cdot,\xi_k), u_L(\cdot,\xi_k)) \in C([0,T]; H^s(\mathbb{T}^d)) \times C([0,T]; H^{s+1}(\mathbb{T}^d)), \]
    and solve the corresponding subsystem in the sense of distributions.

    \item[(ii)] The follower component satisfies
    \[ (\rho_F, u_F) \in C([0,T]; H^s(\mathbb{T}^d)) \times C([0,T]; H^{s+1}(\mathbb{T}^d)), \]
    and solves the follower equation in the sense of distributions.
\end{itemize}
\end{definition}

We are now in a position to state the local well-posedness result.

\begin{theorem}\label{thm:fluid-fluid}
Let $s > d/2 + 1$.  Suppose that the interaction potentials and communication weights satisfy the assumptions in Theorem \ref{thm_ex2}.
%\[
%\nabla W_L \in \calW^{1,\infty}(\T^d), \quad \nabla W_F \in \calW^{1,1}(\T^d), \quad \nabla W_C \in H^{s+1}(\T^d), \quad \phi \in H^{s+1}(\T^d).
%\]
Then, for any constants $0 < N < M$, there exists a positive time $T^*$, depending only on $N$ and $M$, such that if
\[
\max\left\{  \|\rho_L(0)\|_{H^s_g}, \,   \|u_L(0)\|_{H^{s+1}_g}, \, \|\rho_F(0)\|_{H^s}, \,\|u_F(0)\|_{H^{s+1}}\right\} \leq N
\]
and
\[
\inf_{x \in \mathbb{T}^d} \rho_F(0,x) > 0, \quad \inf_{x \in \mathbb{T}^d,\; 1\le k \le P} \rho_L(0,x,\xi_k) > 0,
\]
then the coupled macro-macro system \eqref{eq: macmac xi_k}--\eqref{eq: macmac followers dirac} has a unique regular solution  $\big(\{(\rho_L(\cdot,\xi_k), u_L(\cdot,\xi_k))\}_{k=1}^P, \rho_F, u_F\big)$ in the sense of Definition \ref{def:reg-fluid-fluid}, satisfying
\begin{equation}\label{bd_ff}
\max\left\{\sup_{0\le t\le T^*} \|\rho_L(t)\|_{H^s_g}, \, \sup_{0\le t\le T^*} \|u_L(t)\|_{H^{s+1}_g}, \, \sup_{0\le t\le T^*} \|\rho_F(t)\|_{H^s}, \, \sup_{0\le t\le T^*} \|u_F(t)\|_{H^{s+1}} \right\} \leq M.
\end{equation}
Here, we denoted
\[
\|\rho\|_{H^s_g} = \sup_{1 \leq k \leq P} \|\rho(\cdot, \xi_k)\|_{H^s}, \quad \|u\|_{H^{s+1}_g} = \sup_{1 \leq k \leq P} \|u(\cdot, \xi_k)\|_{H^{s+1}}.
\]
\end{theorem}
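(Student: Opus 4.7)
The plan is to adapt the iterative construction used in the proof of Theorem~\ref{thm_ex2} to the present fluid-fluid setting. The crucial observation is that since $g = \sum_{p=1}^P a_p \delta_{\xi_p}$ is supported on finitely many points, the leader equation \eqref{eq: macmac xi_k} decouples into $P$ pressureless Euler-type subsystems (one for each $\xi_k$), each of precisely the same structural form as the follower subsystem \eqref{eq: macmac followers dirac}. These $P+1$ hyperbolic systems are coupled nonlocally through $\sum_p a_p \nabla W_L * \rho_L(\cdot,\xi_p)$, the convolutions with $\nabla W_C$ and $\phi$, and through the center of mass $\langle x \rangle_{\rho_F}$. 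Hence the argument reduces, in spirit, to running the Theorem~\ref{thm_ex2} proof simultaneously on a finite family of fluid components.

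The first main step is to construct approximate solutions $\bigl(\{(\rho_L^{n+1}(\cdot,\xi_k), u_L^{n+1}(\cdot,\xi_k))\}_{k=1}^P, \rho_F^{n+1}, u_F^{n+1}\bigr)$ via the natural linearization: for each $k$, solve the linear transport equation
$$\p_t \rho_L^{n+1}(\cdot,\xi_k) + \nabla \cdot \bigl(\rho_L^{n+1} u_L^n\bigr)(\cdot,\xi_k) = 0,$$
together with the analogous equation for $\rho_F^{n+1}$ and linear momentum equations in which the convection uses $u_L^n(\cdot,\xi_k)$ (resp.\ $u_F^n$), while the coupling terms use $(n{+}1)$- or $n$-level quantities as convenient. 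Positivity and $H^s$ control of the densities propagate exactly as in Lemma~\ref{lem_rho}, and the $H^{s+1}$ bound for each $u_L^{n+1}(\cdot,\xi_k)$ follows the lines of Lemma~\ref{lem_u} upon noting three points: on $\T^d$, the linear forcing $-(1-\alpha)(x-\xi_k) - \alpha(x - \langle x \rangle_{\rho_F^{n+1}})$ is bounded in $H^{s+1}$ because $x$ ranges over a bounded fundamental domain and $\langle x \rangle_{\rho_F^{n+1}}$ is a constant controlled by $\|\rho_F^{n+1}\|_{L^1}$, which is preserved under the iteration; the self-interaction $\sum_p a_p \nabla W_L * \rho_L^{n+1}(\cdot,\xi_p)$ is bounded in $H^{s+1}$ by $\|\nabla W_L\|_{H^{s+1}}\sum_p a_p \|\rho_L^{n+1}(\cdot,\xi_p)\|_{L^1}$; and the leader-follower coupling through $\nabla W_C$ and $\phi$ is handled exactly as in Lemma~\ref{lem_u}. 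Taking the supremum over $k = 1,\dots,P$ closes the $\|\cdot\|_{H^s_g}$ and $\|\cdot\|_{H^{s+1}_g}$ estimates uniformly in $n$, yielding \eqref{bd_ff} on a short time $T^* = T^*(M,N)$.

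The remaining steps mirror those of Section~\ref{ssec_thm_ex2}. Subtracting consecutive iterates produces linear equations for the differences; an $L^2 \times H^1$ energy estimate, together with the bounds $|\langle x \rangle_{\rho_F^{n+1}} - \langle x \rangle_{\rho_F^n}| \lesssim \|\rho_F^{n+1}-\rho_F^n\|_{L^2}$ (valid since $\T^d$ is bounded) and $\|\nabla W_L * (\rho_L^{n+1}-\rho_L^n)(\cdot,\xi_p)\|_{H^1} \lesssim \|(\rho_L^{n+1}-\rho_L^n)(\cdot,\xi_p)\|_{L^2}$, yields the Cauchy estimate
$$\sup_{k} \Bigl(\|(\rho_L^{n+1}-\rho_L^n)(\cdot,\xi_k)\|_{L^2}^2 + \|(u_L^{n+1}-u_L^n)(\cdot,\xi_k)\|_{H^1}^2 \Bigr) + \|\rho_F^{n+1}-\rho_F^n\|_{L^2}^2 + \|u_F^{n+1}-u_F^n\|_{H^1}^2 \le \frac{C t^n}{n!}$$
on $[0,T^*]$. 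Combined with \eqref{bd_ff}, Gagliardo--Nirenberg interpolation and the weak-$*$ compactness / lower semicontinuity argument of \cite[Appendix A]{CCZ16} then upgrade the limit to $C([0,T^*]; H^s \times H^{s+1})$ in each component. Uniqueness follows from the same $L^2 \times H^1$ energy estimate applied to two regular solutions with identical data, followed by Gr\"onwall's lemma. The main analytical obstacle---and the essential reason for restricting to $\Omega = \T^d$ and to $g$ supported on finitely many points---is precisely the linear $-x$ forcing in the leader momentum equation (see Remark~\ref{rem: en}): on $\R^d$ it precludes global $L^p$ control of $\bar u_L$, whereas on $\T^d$ it is a bounded smooth source, and the Dirac structure of $g$ further reduces the otherwise infinite-dimensional leader family to a finite system on which uniform-in-$k$ estimates hold without loss.
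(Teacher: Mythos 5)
Your proposal is correct and follows essentially the same route as the paper: the same linearized iteration scheme \eqref{app_ff}, the same propagation of positivity and $H^s$ bounds for the densities via characteristics, the same commutator-based $H^{s+1}$ estimates for the velocities exploiting that the $-x$ forcing and $\langle x \rangle_{\rho_F^{n+1}}$ are harmless on $\T^d$, and the same Cauchy-in-$L^2\times H^1$ argument followed by interpolation, the functional-analytic upgrade of \cite[Appendix A]{CCZ16}, and an $L^2\times H^1$ energy estimate for uniqueness. The only slip is bounding the self-interaction by $\|\nabla W_L\|_{H^{s+1}}\sum_p a_p\|\rho_L^{n+1}(\cdot,\xi_p)\|_{L^1}$, since $\nabla W_L\in H^{s+1}$ is not among the hypotheses; the paper instead estimates this term by $\|\nabla W_L\|_{\calW^{1,1}}\sup_p\|\rho_L^{n+1}(\cdot,\xi_p)\|_{H^s}$, which is available because {\bf (A2)} gives $\nabla W_L\in\calW^{1,\infty}(\T^d)\subset\calW^{1,1}(\T^d)$, so the gap is cosmetic rather than substantive.
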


 %%%%%%%%%%%%%%%%%%%%%%%%%%%%%%%%%%%%%%%%%%%%%%%%%%%%%%%%%%%%%%%%%%%%%%%%%
%
%
%
%
%
%%%%%%%%%%%%%%%%%%%%%%%%%%%%%%%%%%%%%%%%%%%%%%%%%%%%%%%%%%%%%%%%%%%%%%%%%

\subsubsection{Approximate solutions} Our starting point is again the following approximation scheme. For each $n\in \bbN$, we define $(\rho_L^{n+1}, u_L^{n+1}, \rho_F^{n+1}, u_F^{n+1})$ as the solution to the following system:    
\begin{align}\label{app_ff}
    \begin{aligned}
      &\p_t \rho^{n+1}_L + \nabla_x\cdot (\rho_L^{n+1} u_L^n) = 0,\\
        &\rho_L^{n+1} \p_t u_L^{n+1} + \rho_L^{n+1} (u_L^n \cdot \nabla_x) u_L^{n+1} = - x \rho_L^{n+1} + (1-\alpha) \xi_k \rho_L^{n+1} + \alpha \rho_L^{n+1}  \langle x \rangle_{\rho_{F^{n+1}}} \\
        &\hspace{6cm} - \rho_L^{n+1} u_L^{n+1} - \rho_L^{n+1} \sum_{p=1}^P a_p (\nabla W_L * \rho_L^{n+1})(x,\xi_p),   \\
             &\p_t \rho_F^{n+1} + \nabla_x\cdot (\rho_F^{n+1} u_F^n) = 0,\\
        &\rho_F^{n+1} \p_t u_F^{n+1} + \rho_F^{n+1} (u_F^n \cdot \nabla_x) u_F^{n+1}   = \rho_F^{n+1} \nabla W_F * \rho_F^{n+1} - \rho_F^{n+1} \sum_{p=1}^P a_p (\nabla W_C * \rho_L^{n+1})(x,\xi_p)    \\
        &\hspace{5.5cm} + \rho_F^{n+1} \sum_{p=1}^P a_p \int_{\T^d} \phi(x-y)(u_L^{n}(y,\xi_p) - u_F^n(x)) \rho_L^{n+1}(y,\xi_p)\dy.   
        \end{aligned}
    \end{align}
The initial data and the first iterate are defined by
    \begin{align*}
        (\rho_L^{n+1}(0,x,\xi_k),u_L^{n+1}(0,x,\xi_k),\rho_F^{n+1}(0,x),u_F^{n+1}(0,x))&= (\rho_L^0(t,x,\xi_k), u_L^0(t,x,\xi_k), \rho_F^0(t,x), u_F^0(t,x)) \\
        &= (\rho_L(0,x,\xi_k), u_L(0,x,\xi_k), \rho_F(0,x), u_F(0,x)),
    \end{align*}
so that the iterative scheme is initialized consistently with the prescribed data.
    
In parallel to the previous subsection, the primary step is to verify the uniform regularity of the sequence generated by this approximation. To that end, we state the following result.
    \begin{proposition} Suppose that the initial data $(\rho_0, u_0)$ satisfy the conditions in Theorem \ref{thm:fluid-fluid}. Then there exists a time $T^*>0$, depending only on $M$ and $N$, such that the system \eqref{app_ff} admits a sequence of unique regular solutions $\{ (\rho^n, u^n)\}_{n\in\mathbb{N}}$ on $[0,T^*]$ satisfying
\begin{align}\label{uni_app}
\begin{aligned}
&\max\left\{\sup_{n\in\N}\sup_{0\le t\le T^*} \|\rho_L^n(t)\|_{H^s_g}, \, \sup_{n\in\N}\sup_{0\le t\le T^*} \|u_L^n(t)\|_{H^{s+1}_g}, \right. \\
&\hspace{3cm} \left. \sup_{n\in\N}\sup_{0\le t\le T^*} \|\rho_F^n(t)\|_{H^s}, \, \sup_{n\in\N}\sup_{0\le t\le T^*} \|u_F^n(t)\|_{H^{s+1}} \right\} \leq M.
\end{aligned}
\end{align}
    \end{proposition}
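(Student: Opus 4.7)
The proof follows the same general architecture as Proposition \ref{prop_app}, adapted to the coupled two-fluid setting with the parameter $\xi$ taking finitely many values. The plan is to establish sequentially: first, positivity and $H^s$-regularity of the densities $\rho_L^{n+1}(\cdot,\xi_k)$ and $\rho_F^{n+1}$ uniformly in $k$ and $n$; second, $H^{s+1}$-regularity of the velocity fields $u_L^{n+1}(\cdot,\xi_k)$ and $u_F^{n+1}$, also uniformly. Since existence and uniqueness of each iterate $(\rho_L^{n+1},u_L^{n+1},\rho_F^{n+1},u_F^{n+1})$ given the previous iterate $(u_L^n,u_F^n)$ follows from standard theory for linear transport and linear hyperbolic equations with smooth coefficients, the entire content of the proposition lies in the uniform bound \eqref{uni_app}.

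For the density estimates, I would fix $k$ and apply the method of characteristics, as in Lemma \ref{lem_rho}. Defining $\eta_{L,k}^{n+1}(s;t,x)$ as the flow driven by $u_L^n(\cdot,\xi_k)$ and $\eta_F^{n+1}(s;t,x)$ as the flow driven by $u_F^n$, the explicit representation formulas for $\rho_L^{n+1}(\cdot,\xi_k)$ and $\rho_F^{n+1}$ along characteristics, combined with the Sobolev embedding $\|\nabla u_L^n(\cdot,\xi_k)\|_{L^\infty}, \|\nabla u_F^n\|_{L^\infty} \le CM$ (valid for $s > d/2+1$), yield strict positivity on a time interval $[0,T_1]$ with $T_1$ depending only on $M$. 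A standard commutator-type $H^s$ energy estimate for the transport equations then yields
\[
\ddt\|\rho_L^{n+1}(\cdot,\xi_k)\|_{H^s}^2 \le CM\|\rho_L^{n+1}(\cdot,\xi_k)\|_{H^s}^2,
\]
and analogously for $\rho_F^{n+1}$. Taking the supremum over $k$ and applying Gr\"onwall's lemma yields $\|\rho_L^{n+1}\|_{H^s_g}, \|\rho_F^{n+1}\|_{H^s} \le M$ on $[0,T_1]$, provided $T_1$ is small enough.

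For the velocity estimates, I would first divide the momentum equations by the positive densities to rewrite them as genuine transport-type equations for $u_L^{n+1}(\cdot,\xi_k)$ and $u_F^{n+1}$, then perform the standard $H^{s+1}$ energy estimate. On the torus $\mathbb{T}^d$, the linear forcing $-x$ is bounded (as is $\xi_k \in B(0,R)$), so its contribution is harmless. The center-of-mass term $\langle x\rangle_{\rho_F^{n+1}}$ is likewise bounded since $x$ is effectively bounded and mass is conserved. The convolution terms $\nabla W_L * \rho_L^{n+1}$, $\nabla W_F * \rho_F^{n+1}$, $\nabla W_C * \rho_L^{n+1}$, and $\phi * (\rho_L^{n+1} u_L^n)$ are controlled in $H^{s+1}$ using the regularity of the kernels together with the already-established density bound; the factor $\sum_{p=1}^P a_p$ produces only a harmless finite sum. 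The alignment term is handled by expanding, using Moser-type product estimates, and absorbing $\|u_F^{n+1}\|_{H^{s+1}}$-factors into the Gr\"onwall loop. The result is a differential inequality
\[
\ddt\bigl(\|u_L^{n+1}\|_{H^{s+1}_g} + \|u_F^{n+1}\|_{H^{s+1}}\bigr) \le C(1+M)\bigl(1 + \|u_L^{n+1}\|_{H^{s+1}_g} + \|u_F^{n+1}\|_{H^{s+1}}\bigr),
\]
from which, as in Lemma \ref{lem_u}, one selects $T^* = T_2 \in (0,T_1]$ small enough so that Gr\"onwall's lemma keeps the right-hand side bounded by $M$.

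The main technical point, rather than an obstacle in the strict sense, is propagating estimates \emph{uniformly} in the discrete parameter $k$: each leader subsystem \eqref{eq: macmac xi_k} is coupled to the follower subsystem and, through the convolution with $\nabla W_L$, to every other leader subsystem indexed by $p \neq k$. Because the $g$-weighted norms $\|\cdot\|_{H^s_g}$ and $\|\cdot\|_{H^{s+1}_g}$ are defined as suprema over the finite set $\{\xi_1,\dots,\xi_P\}$, the coupling terms are bounded by these suprema multiplied by $\sum_p a_p = 1$, so the estimates close uniformly in $k$. The compactness of $\mathbb{T}^d$ plays an essential role in avoiding the growth issue described in Remark \ref{rem: en}; in unbounded domains the $-x$ term would obstruct a global-in-space $H^{s+1}$ bound, which is precisely why the well-posedness theory is developed here only for $\Omega = \mathbb{T}^d$.
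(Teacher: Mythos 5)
Your proposal is correct and follows essentially the same route as the paper: characteristics for positivity and $H^s$ transport estimates on the densities, division by the positive densities, $H^{s+1}$ commutator/energy estimates for the velocities in which the $-x$, $\xi_k$, and center-of-mass terms are bounded using compactness of $\T^d$, $\xi_k\in B(0,R)$, and mass conservation, and finally Gr\"onwall with a small choice of $T^*$, with uniformity in $k$ coming from the supremum-type norms and $\sum_p a_p=1$. The only slight imprecision is that in the scheme \eqref{app_ff} the alignment term in the follower momentum equation involves $u_F^n$ and $u_L^n$ rather than $u_F^{n+1}$, so it is a pure source term bounded by the induction hypothesis and nothing needs to be absorbed into the Gr\"onwall loop; this only makes the estimate easier.
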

\begin{proof}
We proceed under the induction hypothesis that $(\rho_L^n, u_L^n, \rho_F^n, u_F^n)$ satisfy the bounds asserted in \eqref{bd_ff} on a time interval $[0, T]$, where $T > 0$ is to be chosen sufficiently small.

As in Lemma \ref{lem_rho}, applying the method of characteristics yields $T > 0$, dependent only on $M$, such that for all $n \in \mathbb{N}$ and $t \in [0, T]$,
    \begin{align*}
       \rho_L^{n}(t,x,\xi_k) > 0, \quad \rho_F^n (t,x) > 0.
    \end{align*}
    Moreover, we obtain 
\begin{align*}
    &\sup_{t\in [0,T]}  \|\rho_L^{n+1}(t)\|_{H^s_g} \le \|\rho_L(0)\|_{H^s_g}\, e^{CMT}, \quad \sup_{t\in [0,T]} \|\rho_F^{n+1}(t)\|_{H^s} \le \|\rho_F(0)\|_{H^s}\, e^{CMT}.
\end{align*}
Choosing $T_1 \leq T$ small enough so that both bounds are less than $M$ ensures the desired control over the densities.
 
Next, we divide the momentum equations by the strictly positive densities $\rho_L^{n+1}$ and $\rho_F^{n+1}$, respectively, yielding
    \begin{equation*}% \label{eq: macmac iteration momentum}
    \begin{split}
        \p_t u_L^{n+1} + (u_L^n\cdot \nabla_x) u_L^{n+1} &= -x + (1-\alpha) \xi_k + \alpha \langle x \rangle_{\rho_F^{n+1}}  - u_L^{n+1} - \sum_{p=1}^P a_p (\nabla W_L * \rho_L^{n+1})(x,\xi_p),\\
        \p_t u_F^{n+1} + (u_F^n\cdot \nabla_x)u_F^{n+1} &= \nabla W_F * \rho_F^{n+1} - \sum_{p=1}^P a_p (\nabla W_C * \rho_L^{n+1})(x,\xi_p) \\
        &\quad + \sum_{p=1}^P a_p \int_{\T^d} \phi(x-y) (u_L^n(y,\xi_p) - u_F^n(x)) \rho_L^{n+1}(y,\xi_p)\,\dy.
    \end{split}
    \end{equation*}
    
The estimate for $\|u_F^{n+1}(t)\|_{H^{s+1}}$ proceeds analogously to Lemma \ref{lem_u} and yields:
\[
\|u_F^{n+1}(t)\|_{H^{s+1}} \le \|u_F(0)\|_{H^{s+1}}\, e^{C(1+M)T_1} + \Bigl(e^{C(1+M)T_1}-1\Bigr).
\]
We then choose $T_2 \leq T_1$ sufficiently small so that
\[
\sup_{0\le t\le T_2} \|u_F^{n+1}(t)\|_{H^{s+1}}\le M.
\]

We now focus on the estimate for $\|u_L^{n+1}(t)\|_{H^{s+1}_g}$. Differentiating the momentum equation up to order $s+1$ and testing against $\nabla^k u_L^{n+1}$ gives:
\begin{align*}
& \frac12\ddt\|\nabla^k u_L^{n+1}\|_{L^2}^2  + \|\nabla^k u_L^{n+1}\|_{L^2}^2 \cr
&\quad  = \frac12\int_{\T^d} (\nabla \cdot  u_L^n)|\nabla^k u_L^{n+1}|^2\,\dx - \int_{\T^d} \left(\nabla^k ( u_L^n \cdot \nabla u_L^{n+1}) - u_L^n \cdot \nabla^{k+1} u_L^{n+1}\right) \cdot \nabla^k u_L^{n+1}\,\dx\cr
 &\qquad - \int_{\T^d} \nabla^k u_L^{n+1} \cdot \nabla^k (x-  \xi_k + \alpha(\xi_k -   \langle x \rangle_{\rho_F^{n+1}}))\,\dx - \sum_{p=1}^P a_p \int_{\T^d}  \nabla^k u_L^{n+1} \cdot \nabla^k (\nabla W_L * \rho_L^{n+1})(x,\xi_p)\,\dx \cr
&\quad =: \sum_{i=1}^4 J_i,
\end{align*}
where
\begin{align*}
J_1 &\leq CM\|\nabla^k u_L^{n+1}\|_{L^2}^2, \cr 
J_2 &\leq CM(\|u_L^{n+1}\|_{H^{s-1}}+\|\nabla^k u_L^{n+1}\|_{L^2}^2)\|\nabla^k u_L^{n+1}\|_{L^2},  \cr
J_3 &\leq \|\nabla^k u_L^{n+1}\|_{L^2}(1 + (1-\alpha)\sup_{1 \leq k \leq P}|\xi_k| + \alpha\|\rho_F^{n+1}\|_{L^1}) \leq C\|\nabla^k u_L^{n+1}\|_{L^2}(1 + R + M), \cr
 J_4 &\leq \|\nabla^k u_L^{n+1}\|_{L^2}\|\nabla W_L\|_{\calW^{1,1}}\sup_{1 \leq p \leq P}\|\rho_L^{n+1}(\cdot, \xi_p)\|_{H^s} \leq CM\|\nabla^k u_L^{n+1}\|_{L^2}.
\end{align*}
Summing over $0\le k\le s+1$ and applying Gr\"onwall's inequality, we deduce
\[
\|u_L^{n+1}(t)\|_{H^{s+1}} \le \|u_L(0)\|_{H^{s+1}}\, e^{C(1+R+M)T_1} + \Bigl(e^{C(1+R+M)T_1}-1\Bigr).
\]
Choosing $T^* \le T_2$ small enough to make the right-hand side bounded by $M$ completes the proof.
\end{proof}

%%%%%%%%%%%%%%%%%%%%%%%%%%%%%%%%%%%%%%%%%%%%%%%%
%
%
%
%
%
%
%
%
%
%
%
%
%%%%%%%%%%%%%%%%%%%%%%%%%%%%%%%%%%%%%%%%%%%%%%%%
\subsubsection{Proof of Theorem \ref{thm:fluid-fluid}} In parallel to the estimates in Section \ref{ssec_thm_ex2}, one can readily check that 
\begin{align*}
    \frac{1}{2}\ddt \|(\rho_L^{n+1}-\rho_L^n)(\cdot,\xi_k)\|_{L^2}^2 \le C\big(\|(\rho_L^{n+1}-\rho_L^n)(\cdot,\xi_k)\|_{L^2}^2 + \|(u_L^n - u_L^{n-1})( \cdot, \xi_k)\|_{H^1}^2\big)
\end{align*}
and
\begin{align*}
    &\frac{1}{2}\ddt \|u_L^{n+1} - u_L^n\|_{H^1}^2   \le C \Big(\|u_L^{n+1} - u_L^n\|_{H^1}^2 + \|u_L^n - u_L^{n-1}\|_{H^1}^2 \Big)  + C\|\rho_F^{n+1}-\rho_F^n\|_{L^2}^2 + C \|\rho_L^{n+1} - \rho_L^n\|_{L^2}^2.
\end{align*}
In particular, we have
\[
\|(\rho_L^{n+1}-\rho_L^n)(t,\cdot,\xi_k)\|_{L^2}^2 \leq C\int_0^t \|(u_L^n - u_L^{n-1})(\tau, \cdot, \xi_k)\|_{H^1}^2\, \dtau
\]
and
\[
\|(u_L^{n+1} - u_L^n)(t,\cdot, \xi_k)\|_{H^1}^2 \leq C\int_0^t \|(\rho_F^{n+1} - \rho_F^n)(\tau,\cdot)\|_{L^2}^2\,\dtau + C\int_0^t \|(u_L^n - u_L^{n-1})(\tau, \cdot, \xi_k)\|_{H^1}^2\, \dtau
\]
This, together with the estimates in Section \ref{ssec_thm_ex2}, yields
\[
    \calH^n(t) \leq C \int_0^t     \calH^n(\tau)\,\dtau,
\]
where $C>0$ is independent of $n$ and 
\begin{align*}
    \calH^n(t) &:= \sum_{k=1}^P \|(\rho_L^{n+1} - \rho_L^n)(t,\cdot,\xi_k)\|_{L^2}^2 + \|(\rho_F^{n+1}-\rho_F^n)(t,\cdot)\|_{L^2}^2 \\
    &\quad + \sum_{k=1}^P \|(u_L^{n+1}- u_L^n)(t,\cdot,\xi_k)\|_{H^1}^2 + \|(u_F^{n+1} - u_F^n)(t,\cdot)\|_{H^1}^2 
\end{align*}
from which we obtain for all $1\le k \le P$ that $(\rho_L^n(\cdot,\xi_k), u_L^n(\cdot,\xi_k), \rho_F^n, u_F^n )$ are Cauchy sequences in 
\[
C([0,T^*];L^2(\T^d)) \times C([0,T^*];H^1(\T^d)) \times C([0,T^*];L^2(\T^d)) \times C([0,T^*];H^1(\T^d)).
\]
Thus, we deduce there are limits $\rho_L(\xi_k), \rho_F, u_L(\xi_k), u_F$, with $1\le k \le P$, for which
\begin{equation*}
\begin{split}
 (\rho_L^n(\cdot,\cdot,\xi_k), u_L^n(\cdot,\cdot,\xi_k)) &\to (\rho_L(\cdot,\cdot,\xi_k), u_L(\cdot,\cdot,\xi_k)) \quad \text{in} \quad C([0,T];L^2(\T^d)) \times C([0,T]; H^1(\T^d)),\\
(\rho_F^n, u_F^n) &\to (\rho_F, u_F) \quad \text{in} \quad C([0,T];L^2(\T^d)) \times  C([0,T];H^1(\T^d)).
\end{split}
\end{equation*}
The remaining arguments for establishing existence and uniqueness follow directly from those in Section \ref{ssec_thm_ex2} and are thus omitted.

\begin{remark}\label{rmk_ff}
In the case $\Omega = \R^d$, the presence of the center of mass $\langle x \rangle_{\rho_F}$ of the followers prevents a satisfactory existence proof. Even when $\langle x \rangle_{\rho_F}$ is constant and nonzero, there appears to be no effective way to close estimates for $u_L$ in $L^2(\mathbb{R}^d)$. However, when differentiating the momentum equation, the $\langle x \rangle_{\rho_F}$ term vanishes, which allows us to obtain boundedness of $\nabla u_L$ for a short time, assuming this property holds initially. This argument is similar to that in \cite{CC21}. Provided a strong solution exists, such an estimate for $\nabla u_L$ suffices to deduce the quantitative bound in Theorem \ref{thm:mima-mama} discussed earlier in Remark \ref{rem: en}.
In the special case $\alpha = 0$, the $m_F$ term disappears from the leaders' momentum equation. Then, the argument proceeds exactly as in the previous subsection, leading to a rigorous proof of existence and uniqueness of classical solutions to \eqref{eq: macmac xi_k}--\eqref{eq: macmac followers dirac}.
\end{remark}

%
%
%
%
%
%%%%%%%%%%%%%%%%%%%%%%%%%%%%%%%%%%%%%%%%%%%%%%%%%%%%%%%%%%%%%%%%%%%%%%%%%

%
%
%
%
%
%%%%%%%%%%%%%%%%%%%%%%%%%%%%%%%%%%%%%%%%%%%%%%%%%%%%%%%%%%%%%%%%%%%%%%%%%

\section{{Derivation of the feedback control}}\label{appendix:greedycontrol}
In order to derive a feedback control from \eqref{eq:main-cost}, we consider a short-horizon approximation for the optimal control problem \eqref{eq:main-with-u}--\eqref{eq:main-cost}, similarly to the approach proposed in \cite{ACH18,APZ14}. Specifically, we restrict our attention to a reduced time interval $[t, t+h]$ and assume that the control input $\U_i(s)$ remains constant over this interval, i.e., $\U_i(s) \equiv \bar \U_i(t)$ for $s \in [t, t+h]$. This allows us to define a discrete-time cost functional of the form:
\[
J_h(\bar \U) = \frac{1}{N}\sum_{i=1}^N\alpha|x_i(t+h;\bar \U_i)-\langle x \rangle_{\varrho_F^M}(t)|^2 + \beta|x_i(t+h;\bar \U_i)-x_{d_i}|^2+h\gamma |\bar \U_i|^2,
\]
where $x_i(t+h; \bar \U_i)$ is the position of the $i$th leader at time $t+h$, updated via a semi-implicit discretization:
\[
\begin{split}
x_i(t+h;\bar \U_i) &= x_i(t) + hv_i(t+h;\bar \U_i),\\
v_i(t+h;\bar \U_i)&=v_i(t) + h\bar \U_i +\frac{h}{N}\sum_{j=1}^N \nabla W_L (x_j(t)-x_i(t)).
\end{split}
\]
Substituting the velocity into the position update yields:
\begin{equation}\label{eq:x-plus}
x_i(t+h;\bar \U_i) = x_i(t) + hv_i(t)+h^2\bar \U_i +\frac{h^2}{N}\sum_{j=1}^N \nabla W_L (x_j(t)-x_i(t)).
\end{equation}
We now minimize $J_h$ with respect to $\bar \U_i$. Taking the gradient of $J_h$ and setting it to zero yields the optimality condition:
\[
0=\nabla_{\U_i} J_h(\bar \U) = h\alpha(x_i(t+h)-\langle x \rangle_{\varrho_F^M}(t)) + h\beta(x_i(t+h)-x_{d_i}) + \gamma \bar \U_i,
\]
which results in the following expression for $\bar \U_i$
\[
\bar \U_i = -\frac{h}{\gamma + h^3(\alpha +\beta)}\left(\alpha(x_i^+(t)-\langle x \rangle_{\varrho_F^M}(t)) + \beta(x_i^+(t)-x_d)\right),
\]
where $x^+(t)$ is the update \eqref{eq:x-plus} without the control term $h^2\bar \U_i$.

To obtain a feedback control law in continuous time, we perform a scaling of the parameters $\alpha \to \alpha/h$ and $\beta \to \beta/h$, which balances the relative importance of the position terms as $h \to 0$. Taking the formal limit $h \to 0$, we arrive at the feedback control:
\begin{equation}\label{eq:ictrl}
 \U_i(t) = -\frac{1}{\gamma}\left(\alpha(x_i(t) - \langle x \rangle_{\varrho_F^M}(t)) + \beta(x_i(t)-x_d)\right).
\end{equation}
This control depends linearly on the distance between the current position of each leader and both the center of mass of the followers and their individual destinations. The resulting force acts in the direction that simultaneously promotes group cohesion and goal-oriented movement.

We remark that the parameter scaling ensures dimensional consistency: the control $\U_i(t)$ has the dimension of velocity, while the terms $(x_i - \langle x \rangle_{\varrho_F^M})$ and $(x_i - x_{d_i})$ represent positions. The corresponding feedback control used in \eqref{eq:mimi} corresponds to choose $\gamma = 1$, and impose $\alpha + \beta = 1$, treating $\alpha,\beta \in [0,1]$ in the feedback law \eqref{eq:ictrl}.

%
%
%
%
%
%%%%%%%%%%%%%%%%%%%%%%%%%%%%%%%%%%%%%%%%%%%%%%%%%%%%%%%%%%%%%%%%%%%%%%%%%

%
%
%
%
%
%%%%%%%%%%%%%%%%%%%%%%%%%%%%%%%%%%%%%%%%%%%%%%%%%%%%%%%%%%%%%%%%%%%%%%%%%

\end{document}